\newcommand{\N}{\mathbb{N}}
\newcommand{\A}{\mathcal{A}}
\renewcommand{\phi}{\varphi}
\theoremstyle{plain}
\newtheorem{theorem}[subsection]{Theorem}
\newtheorem{proposition}[subsection]{Proposition}
\newtheorem{lemma}[subsection]{Lemma}
\newtheorem{corollary}[subsection]{Corollary}
\newtheorem*{theorem*}{Theorem}
\theoremstyle{definition}
\newtheorem{definition}[subsection]{Definition}
\newtheorem{remark}[subsection]{Remark}
\newtheorem{conjecture}[subsection]{Conjecture}
\numberwithin{equation}{section}
\begin{document}

\title{Quasi-invariant lifts of completely positive maps for groupoid actions}
\author{Suvrajit Bhattacharjee} 
\address{Matematisk institutt, Universitetet i Oslo, P.O. Box 1053, Blindern, 0316 Oslo, Norway}
\email{suvrajib@math.uio.no}

\author{Marzieh Forough} 
\address{Department of Applied Mathematics, Faculty of Information Technology, Czech Technical University in Prague, Th\'akurova 9, 160 00, Prague 6, Czech Republic} 
\email{foroumar@fit.cvut.cz}

\begin{abstract}
Let $G$ be a locally compact, Hausdorff, second countable groupoid and $A$ be a separable, $C_0(G^{(0)})$-nuclear, $G$-$C^*$-algebra. We prove the existence of quasi-invariant, completely positive and contractive lifts for equivariant, completely positive and contractive  maps from $A$ into a separable, quotient $C^*$-algebra. Along the way, we construct the Busby invariant for $G$-actions.
\end{abstract}

\subjclass{22A22; 46L07; 46L35}

\keywords{Groupoids; $C_0(X)$-algebras; completely positive maps; Busby invariant.}

\maketitle

\section{Introduction} In a series of papers \cites{szabo:self-absorbing-1,szabo:self-absorbing-2,szabo:self-absorbing-3}, Szab\'o introduced and in the process, systematized, strongly self-absorption of dynamical systems for actions of locally compact, Hausdorff groups. Motivated by the work of \cite{hirshberg-rordam-winter:self-absorption} on strongly self-absorption of upper semi-continuous fields of $C^*$-algebras, the second author and Gardella were naturally led in \cite{forough-gardella:absorption} to extend Szab\'o's framework to Rieffel's upper semi-continuous fields of actions, see \cite{rieffel:continuous-fields}*{Definition 3.1}. The second author and Gardella had the need for and proved, together with Thomsen, the following (special case of the) lifting theorem in \cite{forough-gardella-thomsen:lifting}, which is also the main technical tool in \cite{forough-gardella:absorption}. 

\begin{theorem}[{\cite{forough-gardella-thomsen:lifting}*{Theorem 3.4}}]\label{theo:equivariant-lifts}
 Let $G$ be a locally compact, Hausdorff, second countable group and $(A,\alpha)$ be a separable, nuclear, $G$-$C^*$-algebra. Let  
\[
0 \rightarrow (I,\gamma) \rightarrow (B,\beta) \xrightarrow{p} (D,\delta) \rightarrow 0
\]be a $G$-$C^*$-extension, with $B$ separable. Let $\psi : A \rightarrow D$ be a $G$-equivariant, completely positive and contractive map. Then there exists a sequence of completely positive and contractive maps $\phi_n : A \rightarrow B$, $n \in \mathbb{N}$, such that $p \circ \phi_n=\psi$ and 
\begin{equation*}
\lim_{n \rightarrow \infty} \sup_{g \in K} \left\lVert \beta_g (\phi_n(a))-\phi_n(\alpha_g(a)) \right\rVert=0,
\end{equation*}for all $a \in A$ and for all compact subsets $K \subseteq G$.
\end{theorem}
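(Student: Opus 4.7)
The plan is to produce the $\phi_n$ by starting with an ordinary c.p.c.\ lift of $\psi$ and then compressing by a carefully chosen quasi-central approximate unit of $I$ in $B$. Because $A$ is separable and nuclear, the Choi--Effros lifting theorem produces a c.p.c.\ map $\phi_0 : A \to B$ with $p \circ \phi_0 = \psi$. The failure of equivariance is confined to the ideal: by equivariance of $\psi$, the ``defect''
\[
\delta_g(a) := \beta_g(\phi_0(a)) - \phi_0(\alpha_g(a))
\]
takes values in $I$, is jointly continuous in $(g, a) \in G \times A$, and satisfies the $1$-cocycle identity $\delta_{gh} = \delta_g \circ \alpha_h + \beta_g \circ \delta_h$.

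Next, I would fix a countable dense subset $\{a_k\} \subset A$ and an exhaustion $K_1 \subset K_2 \subset \cdots$ of $G$ by compact sets, and build a quasi-central approximate unit $(e_n)$ for $I$ in $B$ with the additional property that
\[
\sup_{g \in K_n}\;\max_{k \le n}\; \left\lVert [e_n,\, \beta_g(\phi_0(a_k))]\right\rVert \longrightarrow 0 \quad (n \to \infty).
\]
This is available by a standard diagonal argument, since each $\{\beta_g(\phi_0(a_k)) : g \in K_n,\, k \le n\}$ is norm-compact in $B$ and any quasi-central approximate unit of $I$ in $B$ asymptotically commutes with elements of norm-compact subsets of $B$. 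Then define
\[
\phi_n(a) := (1 - e_n)^{1/2} \phi_0(a) (1 - e_n)^{1/2}.
\]
Each $\phi_n$ is c.p.c., and $p \circ \phi_n = \psi$ because $e_n \in I = \ker p$.

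For asymptotic equivariance I would expand $\beta_g(\phi_n(a)) - \phi_n(\alpha_g(a))$, use quasi-centrality to move the $(1-e_n)^{1/2}$ factors past $\phi_0$ and its $\beta_g$-translates, and collect the terms: up to corrections that vanish by the commutator estimates above, what remains is essentially $(1-e_n)^{1/2} \delta_g(a) (1-e_n)^{1/2}$, which tends to $0$ since $\delta_g(a) \in I$ and $(e_n)$ is an approximate unit of $I$. Uniformity in $g \in K$ for a fixed compact $K \subset G$ and in $a$ ranging over a finite subset of $A$ is a consequence of joint continuity of $\delta$, together with the fact that $\{\delta_g(a) : g \in K\}$ is then a norm-compact subset of $I$, on which the approximate-unit action of $(e_n)$ is uniform; a final diagonal extraction then yields the sequence announced in the statement.

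The delicate point I expect to be the main obstacle is controlling the auxiliary terms that arise from $\beta_g(e_n) - e_n$. Since amenability of the $G$-action is not assumed, the $e_n$ cannot be chosen approximately $\beta$-invariant on compacts in norm. I would reabsorb these contributions by appealing to Kasparov's equivariant technical theorem to arrange that $\beta_g(e_n) - e_n$ is small \emph{modulo} $I$, uniformly in $g$ on compacts; this modular invariance is enough, because in the expanded defect every such factor appears adjacent to an element of $I$ (namely $\delta_g(a)$), so one final application of the approximate-unit property of $(e_n)$ on $I$ makes the residue vanish.
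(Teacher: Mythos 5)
There is a genuine gap, and it sits exactly at the point you flag as the ``delicate point'': the treatment of $\beta_g(e_n)-e_n$. Your diagnosis is wrong in both directions. First, the obstruction you fear does not exist: no amenability is needed to choose $(e_n)$ asymptotically $\beta$-invariant in norm, uniformly on compact subsets of $G$. This is precisely Kasparov's lemma \cite{kasparov:equivariant-kk}*{Lemma 1.4} (whose groupoid analogue is item (3) of Lemma \ref{lemma:g-quasi-central} in this paper): for a separable $G$-$C^*$-algebra over a second countable locally compact group, convex combinations of any given approximate unit can be arranged to be simultaneously quasi-central and norm quasi-invariant on compacta. Second, the substitute you propose is vacuous: since $I$ is $G$-invariant and $e_n\in I$, one has $\beta_g(e_n)-e_n\in I$ for every $g$, so ``small modulo $I$'' holds trivially (the difference is identically zero in $\mathcal{Q}(I)$) and yields no norm control whatsoever.

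Consequently the reabsorption step fails as written. Setting $X=(1-\beta_g(e_n))^{1/2}$, $Y=(1-e_n)^{1/2}$ and $m=\beta_g(\phi_0(a))$, and using $\beta_g((1-e_n)^{1/2})=(1-\beta_g(e_n))^{1/2}$, the defect expands as
\[
\beta_g(\phi_n(a))-\phi_n(\alpha_g(a))=(X-Y)\,m\,X+Y\,m\,(X-Y)+Y\,\delta_g(a)\,Y,
\]
and only the last term is adjacent to a fixed (compactly varying) element of $I$; in the first two terms, the factor $X-Y$ lies in $I$ but \emph{depends on $n$} and multiplies $m\in B$, so the approximate-unit property of $(e_n)$ on fixed compact subsets of $I$ cannot absorb it --- contrary to your claim that every such factor appears adjacent to $\delta_g(a)$. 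What you genuinely need is $\sup_{g\in K}\lVert\beta_g(e_n)-e_n\rVert\to 0$, which, via the uniform continuity of $t\mapsto(1-t)^{1/2}$ on positive contractions (in the spirit of Lemma \ref{lemma:square-root}), controls $\lVert X-Y\rVert$. With Kasparov's lemma supplying exactly this, the rest of your argument does close up (quasi-centrality is then not even essential for your single compression), and it would give a more elementary proof of this group-case statement than the route taken here: the paper proves the sharper quantitative bound of Theorem \ref{theo:lift} --- the equivariance modulus of the lift is controlled by that of $\psi$ plus $\epsilon$ --- by passing through the Busby invariant to $\mathcal{Q}(I)$ and compressing by the series $\sum_{l\ge n}\Delta_l\phi(a)\Delta_l$ built from Lemma \ref{lemma:averaging}; that finer, fiberwise control is what survives the generalization to groupoid actions.
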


It is well-known by now that Arveson, in his work \cite{arveson:essentially-normal}, used previous lifting theorems of Vesterstr\o m, Anderson and Ando, to prove that the Brown-Douglas-Fillmore group $\mathrm{Ext}(X)$ for a compact metric space $X$ (\cite{brown-douglas-fillmore:k-homology}), is indeed a group. Soon afterwards, Choi-Effros in their remarkable paper \cite{choi-effros:lifting} generalized all the previously known lifting theorems for completely positive (and contractive) maps. Within a year, Arveson, in his seminal paper \cite{arveson:extensions}, introduced the ubiquitous quasi-central approximate units and used it firstly, to simplify Choi and Effros' proof of their lifting theorem and secondly, to show that the Brown-Douglas-Fillmore group $\mathrm{Ext}(A)$ for a separable, nuclear $C^*$-algebra $A$ is again indeed a group, which was previously shown to be a semigroup by Voiculescu. Using similar ideas, Kasparov in \cite{kasparov:operator-k} showed that $\mathrm{Ext}(A,B)$ for a separable, nuclear $C^*$-algebra $A$ and a stable, $\sigma$-unital $C^*$-algebra $B$ is a group too. By now, the Choi-Effros lifting theorem is a standard, important technical tool, which also plays a crucial role in the recent breakthrough \cite{tikuisis-white-winter:quasidiagonality}. It may also be perceived as a noncommutative analogue of the Michael's selection theorem \cite{michael:selection}; see, for instance \cites{kirchberg:classification,gabe:lifting}.   

Although, the framework in \cite{forough-gardella:absorption} is general enough to include most of the cases of interest, it is, however, in many ways natural to let the group act on the base of the field of $C^*$-algebras, i.e., to consider $G$-$C_0(X)$-algebras in Kasparov's sense, \cite{kasparov:equivariant-kk}*{Definition 1.5}. As a first step towards this direction, one would need to establish Theorem \ref{theo:equivariant-lifts} for such algebras which does not follow automatically. The difficulty maybe perceived from the observation that Theorem \ref{theo:equivariant-lifts} enables one to obtain only fiber-wise, quasi-invariant lifts; one then has to glue them in order to construct a global lift which is nearly impossible except for a few pathological cases. Another scenario, which is again natural, is to consider, roughly speaking, dynamical systems $(A(x),G(x),\alpha(x))$ varying continuously on some parameter space $X$. The obstruction is similar; one only has fiber-wise, quasi-invariant lifts.

Our approach is to take the more general vantage point of groupoids and along the way, include foliations into the framework. Introduced in operator algebras more than four decades ago by Connes, by now, there is a substantial literature on groupoids and their dynamical systems. The dynamical flavour of groupoids was investigated in \cite{renault:crossed-products} probably for the first time and studied from the perspective of Kasparov's bivariant theory in \cite{le_gall:equivariant-kk}. The latter was then successfully employed in \cite{tu:baum-connes-amenable} in the context of the Baum-Connes conjecture. We take the very first steps in this article of a systematic exploration of strongly self-absorption for groupoid dynamical systems, aiming towards generalizations of \cites{szabo:equivariant-kirchberg-phillips,suzuki:equivariant-absorption} and the recent breakthrough papers \cites{gabe-szabo:dynamical-kirchberg-phillips,gabe-szabo:stable-uniqueness}. Let us now state (a special case of) our main theorem, which is a vast generalization of Theorem \ref{theo:equivariant-lifts}.  

\begin{theorem}\label{theo:main-result}
Let $G$ be a locally compact, Hausdorff, second countable groupoid. Let $(A,\alpha)$ be a separable, $C_0(G^{(0)})$-nuclear, $G$-$C^*$-algebra and let  
\[
0 \rightarrow (I,\gamma) \rightarrow (B,\beta) \xrightarrow{p} (D,\delta) \rightarrow 0
\]be a $G$-$C^*$-extension with $B$ separable. Let $\psi : A \rightarrow D$ be a $G$-equivariant, $C_0(G^{(0)})$-linear, completely positive and contractive map. Then there exists a sequence of $C_0(G^{(0)})$-linear, completely positive and contractive maps $\phi_n : A \rightarrow B$, $n \in \mathbb{N}$, such that $p \circ \phi_n=\psi$ and 
\begin{equation*}
\lim_{n \rightarrow \infty} \sup_{g \in K} \left\lVert \beta_g ((\phi_n)_{s(g)}(a(s(g))-(\phi_n)_{r(g)}(\alpha_g(a(s(g)))) \right\rVert=0,
\end{equation*}for all $a \in A$ and for all compact subsets $K \subseteq G$.
\end{theorem}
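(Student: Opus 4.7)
The plan is to adapt Arveson's proof of the equivariant Choi--Effros lifting theorem \cite{arveson:extensions}, with two new difficulties to address: ensuring $C_0(G^{(0)})$-linearity of everything, and handling the fact that the groupoid action moves between different fibers of $I$ and $B$.

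First I would obtain a $C_0(G^{(0)})$-linear, completely positive and contractive lift $\phi : A \to B$ of $\psi$. Such a lift is produced by applying the classical Choi--Effros theorem, whose hypotheses are supplied by $C_0(G^{(0)})$-nuclearity of $A$, followed by a standard averaging against a quasi-central approximate unit of $C_0(G^{(0)}) \subseteq ZM(B)$ to enforce $C_0(G^{(0)})$-linearity. The heart of the proof is then to construct a positive, contractive approximate unit $(e_n)$ of $I$ which is both quasi-central for $B$ in Arveson's sense and \emph{quasi-invariant} under the groupoid action, in the sense that
\[
\sup_{g \in K}\norm{\beta_g((e_n)_{s(g)}) - (e_n)_{r(g)}} \xrightarrow{n \to \infty} 0
\]
for every compact $K \subseteq G$. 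To make sense of $\beta$ coherently on $M(I)$ over the unit space, one uses the Busby invariant construction developed earlier in the paper; granted this setup, existence of $(e_n)$ should follow by a convex combination argument \`a la Arveson, exploiting continuity of $g \mapsto \beta_g(e_{s(g)})$ for each fixed $e$, together with separability of $B$ and second countability of $G$.

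With $(e_n)$ in hand, I would set $\phi_n(a) := (1-e_n)^{1/2}\,\phi(a)\,(1-e_n)^{1/2}$. Since $e_n \in I = \ker p$, one has $p \circ \phi_n = \psi$, and $\phi_n$ is manifestly completely positive, contractive, and $C_0(G^{(0)})$-linear. The fibred error $\eta_g(a) := \beta_g(\phi_{s(g)}(a(s(g)))) - \phi_{r(g)}(\alpha_g(a(s(g))))$ lies in $I_{r(g)}$ because $\psi$ is $G$-equivariant, and depends norm-continuously on $g$ for each fixed $a$. Using quasi-invariance of $(e_n)$ to replace $\beta_g$ applied to the $s(g)$-fiber of $(1-e_n)^{1/2}$ by the $r(g)$-fiber up to small error, and using that $(e_n)$ is an approximate unit for $I$, one finds that
\[
\beta_g((\phi_n)_{s(g)}(a(s(g)))) - (\phi_n)_{r(g)}(\alpha_g(a(s(g)))) \longrightarrow 0
\]
uniformly for $g$ in any compact subset of $G$, which is the asserted asymptotic equivariance.

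The main obstacle is the construction of the quasi-invariant approximate unit. In the group case one simply averages a quasi-central approximate unit over a compact subset of $G$, because $\beta_g$ preserves $I$; for a groupoid, $\beta_g$ maps $I_{s(g)}$ to $I_{r(g)}$, so any ``averaging'' must respect the fibered structure and cannot be carried out naively. This is precisely why the groupoid Busby invariant is essential rather than cosmetic: it supplies a framework in which the quasi-invariance condition is well-posed and in which Arveson's convexity argument can be run globally over $G^{(0)}$, not merely fiberwise --- and, as noted in the introduction, gluing fiberwise lifts is essentially hopeless.
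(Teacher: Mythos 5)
Your main construction is correct, but it takes a genuinely different route from the paper's, and it is worth seeing why both work. The paper never compresses by a single $(1-e_n)^{1/2}$: it first proves a quantitative statement (Lemma \ref{lem:lift-calkin} and Theorem \ref{theo:lift}) in which $\psi$ is \emph{not} assumed equivariant and the lift's equivariance modulus is controlled by $\norm{(\psi,\alpha,\tilde\gamma)}_{F,K}+\epsilon$. For that it passes to the corona picture via the groupoid Busby invariant ($\dot\phi=\bar\mu(\eta)\circ\phi$, $\dot\psi=\mu(\eta)\circ\psi$, then back through the pullback $E$ and $\Theta^{-1}$) and uses the Arveson-type infinite averaging $\phi_n(a)=\sum_{l\geq n}\Delta_l\phi(a)\Delta_l$ with $\Delta_l=\sqrt{u_l-u_{l-1}}$, where the Cauchy--Schwarz trick with $\Delta_l=(1-u_{n_0})\Delta_l$ converts a corona-norm bound into a multiplier-norm bound uniformly over $K$; the theorem as stated then follows by iterating over $F_n\nearrow A$ dense, $K_n\nearrow G$, $\epsilon=1/n$. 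Your single compression works precisely because you treat only exactly equivariant $\psi$: the defect $g\mapsto\eta_g(a)=\beta_g(\phi_{s(g)}(a(s(g))))-\phi_{r(g)}(\alpha_g(a(s(g))))$ is an element of $r^*B$ killed by $r^*p$, hence lies in $r^*I$ by exactness of pullbacks, and since $r^*(e_n)$ is a (multiplier) approximate unit for $r^*I$ one gets $\sup_{g\in K}\norm{(1-e_n)^{1/2}(r(g))\,\eta_g(a)\,(1-e_n)^{1/2}(r(g))}\to 0$, the remaining error being handled by quasi-invariance of $(e_n)$ plus polynomial approximation of the square root. Note that your argument then needs neither quasi-centrality nor, contrary to your closing paragraph, the Busby invariant: your compression never leaves $B$, and the quasi-invariant approximate unit is the paper's Lemma \ref{lemma:g-quasi-central} (after Le Gall and Tu), independent of Section \ref{sec:busby}. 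The Busby machinery is what the \emph{paper's} reduction-to-corona route requires, and what it buys is the stronger quantitative statement your compression cannot give: for non-equivariant $\psi$ the defect lies only in $\mathcal{M}(I(r(g)))$ with small corona norm, $g\mapsto\eta_g(a)$ is merely strictly continuous, and uniform control over $K$ by a single compression fails there.

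The one genuine gap is your first step. Averaging a Choi--Effros lift against an approximate unit of $C_0(G^{(0)})\subseteq\mathcal{ZM}(B)$ cannot enforce $C_0(G^{(0)})$-linearity: such an approximate unit acts centrally, so conjugating or cutting by it changes nothing, and $C_0(X)$-linearity is an exact algebraic condition that no naive averaging produces. The existence of a $C_0(G^{(0)})$-linear completely positive contractive lift is a theorem in its own right --- the paper's Proposition \ref{prop:c0x-choi-effros}, after Kasparov--Skandalis and Bauval (cf.\ Gabe) --- proved by showing that $C_0(X)$-factorable maps lift (Lemma \ref{lem:matrix-lifts}, a bare-hands argument for $M_n(\C)\otimes C_0(X)$) and that liftable maps are point-norm closed; it genuinely uses $C_0(X)$-nuclearity rather than mere nuclearity (by Bauval's theorem, $C_0(X)$-nuclear means nuclear \emph{and} a continuous field). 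Replace your averaging sketch with an appeal to that proposition and your route goes through.
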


We refer the reader to the next section for the undefined terminology and end this introduction with a few words on the proof. Our approach is standard and is already taken in \cite{forough-gardella-thomsen:lifting} to prove Theorem \ref{theo:equivariant-lifts}. Roughly speaking, $C_0(G^{(0)})$-nuclearity ensures the existence of a $C_0(G^{(0)})$-linear, completely positive and contractive lift. We then ``average'' this lift with respect to a quasi-invariant, quasi-central, approximate units. This is achieved by first considering the case when the codomain is a Calkin algebra and then using the Busby invariant to pass to the general case. However, there are few a technical points that are worth mentioning. Firstly, the existence of such an approximate units goes back to \cite{le_gall:equivariant-kk} and we modify his construction to suit our needs. Secondly, while considering the case when the codomain is a Calkin algebra, framing the right statement becomes cumbersome due to the fact that neither the multiplier nor the Calkin algebra is a $C_0(G^{(0)})$-algebra. After fleshing out the necessary technical details, one faces the problem of constructing the Busby invariant for groupoid actions. We overcome this but in a slightly less degree of generality than one might hope for. This is mainly due to the fact that the corresponding theory in the case of group actions relies heavily on the automatic continuity theorem of \cite{thomsen:equivariant-kk} and \cite{brown:continuity} which we lack. We conjecture, however, that this should continue to hold in our case as well. We note that Theorem \ref{theo:main-result} is similar to \cite{higson-kasparov:e-theory}*{Theorem 8.1}; however, the precise connection is yet to be examined. While preparing a first draft of this article, we came to know about the paper \cite{gabe:lifting} which provides a generalization of the Choi-Effros lifting theorem. We talk about this paper in relation to our results in the somewhat longer appendix.  

We end this introduction by briefly describing the organization of this article. In Section \ref{sec:groupoids-actions}, we gather the necessary background material on groupoids and their actions in a self-contained  fashion, as much as possible. Section \ref{sec:busby} fleshes out the technicalities of the Busby invariant for groupoid actions. In Section \ref{sec:quasi-central}, we construct the averaging tool, namely, the quasi-invariant, quasi-central, approximate units. Section \ref{sec:main-results} contains the main results. As mentioned above, the somewhat longer Appendix \ref{sec:appendix} is dedicated to the clarification of one of our assumptions and on describing the precise relation of our results to \cite{gabe:lifting}.

\section*{Acknowledgments} The first author was supported by the NFR project 300837 ``Quantum Symmetry'' and the Charles University PRIMUS grant \textit{Spectral Noncommutative Geometry of Quantum Flag Manifolds} PRIMUS/21/SCI/026. He expresses his sincere gratitude to Sergey Neshveyev for several uplifting discussions on the subject and his support throughout the preparation of this article. He also thanks Valerio Proietti for pointing out the similarity of Theorem \ref{theo:main-result} to \cite{higson-kasparov:e-theory}*{Theorem 8.1} and several fruitful discussions, Joachim Zacharias for pointing out \cite{forough-gardella-thomsen:lifting}, Pierre-Yves Le Gall and Jonathan Crespo for several email correspondences. The second author thanks the University of Oslo for support during a visit in summer of 2023, when part of this work was carried out.

\section*{Notations and Conventions} The norm-closed linear span of a subset $S$ of a Banach space is denoted by $[S]$. For a right Hilbert $A$-module $E$, $\mathcal{L}(E)$ denotes the $C^*$-algebra of adjointable operators on $E$. A $*$-homomorphism $\pi : A \rightarrow \mathcal{L}(E)$ is called nondegenerate if $[\pi(A)E]=E$.

\section{Groupoids and their actions}\label{sec:groupoids-actions} In this section, we collect a few basic definitions concerning (locally compact, Hausdorff) groupoids and their actions. We will give precise references as we proceed but we will generally follow \cite{le_gall:equivariant-kk}. To fix notation, we recall that a groupoid is a small category in which each arrow is invertible. If $G$ is a groupoid, we write $G^{(0)}$ for the set of objects; $G^{(2)}$ is the set of composable arrows; $r$ and $s$ are the range and source maps, respectively; $u$ denotes the
unit map from $G^{(0)}$ to $G$. For more details, we refer to \cite{renault:book-groupoid-approach}*{Chapter 1}. In what follows, by a space we mean a locally compact, Hausdorff space and by a groupoid we mean a locally compact, Hausdorff groupoid.

Given a groupoid $G$, a space $X$ and a continuous map $p : X \rightarrow G^{(0)}$, we write, by a slight abuse of notation,
$s^{*}X$ for the pullback $G \tensor[^{s}]{\times}{_{G^{(0)}}} X$ (borrowed from \cite{proietti-yamashita:homology-1}) of the diagram 
\[
\begin{tikzcd}
& X \arrow[d,"p"]\\
G \arrow[r, "s"] & G^{(0)},
\end{tikzcd}    
\]i.e.,
\[
s^{*}X =\{(g,x) \in G \times X \mid s(g)=p(x)\},
\]
and equip it with the relative topology of $G \times X$. Similarly, we shall write $r^{*}X$ for $G \tensor[^{r}]{\times}{_{G^{(0)}}} X$.

\begin{definition}
Let $G$ be a groupoid. A (left) $G$-space is a triple $(X,p,\alpha)$ where $X$ is a space, $p$ is a continuous map from $X$ to $G^{(0)}$, called the \emph{moment map}, and $\alpha$ is a continuous map from
$s^{*}X \rightarrow X$, $\alpha(g,x):=g \cdot x$, satisfying the following conditions.
\begin{itemize}
\item For all $x \in X$, $u_{p(x)} \cdot x=x$.
\item If $(g,h) \in G^{(2)}$ and $(h,x) \in s^{*}X$ then $(g, h \cdot x)\in s^{*}X$ and $(gh) \cdot x=g \cdot (h \cdot x)$. 
\end{itemize}
\end{definition}

It follows that $p(g \cdot x)=r(g)$, i.e., $(g,g \cdot x) \in r^{*}X$. Thus one may view $\alpha$ as a continuous map from $s^{*}X \rightarrow r^{*}X$. To define $G$-actions on $C^{*}$-algebras, we need some preparation. Let $X$ be a space. 

\begin{definition}[{\cite{kasparov:equivariant-kk}*{Definition 1.5}}]
A $C_{0}(X)$-\emph{algebra} is a pair $(A,\theta_{A})$ consisting of a
$C^{*}$-algebra $A$ and a nondegenerate $*$-homomorphism $\theta_{A} :
C_{0}(X) \rightarrow \mathcal{M}(A)$ such that $\theta_{A}(C_{0}(X)) \subset \mathcal{Z} \mathcal{M}(A)$, the center of the multiplier algebra $\mathcal{M}(A)$. 
\end{definition} 

By slightly abusing notation, we will only write $A$ for a $C_{0}(X)$-algebra $(A,\theta_{A})$, suppressing the structural map $\theta_{A}$. We will also write $fa$ for $\theta_{A}(f)(a)$, where $f \in C_{0}(X)$ and $a \in A$. 

Let $A$ be a $C_{0}(X)$-algebra. For $x \in X$, we denote by $\mathfrak{m}_{x}$ the maximal ideal in $C_{0}(X)$ consisting of continuous functions on $X$ that vanish at $x$. The norm-closed linear span $[\mathfrak{m}_{x}A]$ is a (closed, two-sided) ideal in $A$. The \emph{fiber} of $A$ at $x$, denoted by $A(x)$, is the quotient $C^{*}$-algebra $A/[\mathfrak{m}_{x}A]$. We denote the quotient map
by $\pi_{x} : A \rightarrow A(x)$ and write $\pi_{x}(a)=a(x)$, for all $a \in A$. For $f \in C_{0}(X)$ and $a \in A$, one has $(fa)(x)=f(x)a(x)$. For $a \in A$, the map $x \mapsto \left\lVert a(x) \right\rVert$ is \emph{upper semi-continuous} and $\left\lVert a \right\rVert=\sup_{x \in X}\left\lVert a(x) \right\rVert$. If the map $x \mapsto \left\lVert a(x) \right\rVert$ is continuous for all $a \in A$, we say that $A$ is a \emph{continuous field} of $C^{*}$-algebras.   


\begin{definition}
Let $A$ and $B$ be two $C_0(X)$-algebras. A $*$-homomorphism (or a completely positive map) $\phi : A \to B$ is $C_0(X)$-\emph{linear} if $\phi(fa)=f\phi(a)$, for all $f \in C_0(X)$ and $a \in A$.
\end{definition}

If $\phi : A \to B$ is a $C_0(X)$-linear, $*$-homomorphism, then it induces a $*$-homomorphism $\phi_x : A(x) \to B(x)$ for every $x \in X$. By \cite{le_gall:equivariant-kk}*{Proposition 3.1}, $\phi$ is injective (respectively, surjective) if and only if $\phi_x$ is injective (respectively, surjective), for all $x \in X$.  Similarly, if $\phi : A \to B$ is a $C_0(X)$-linear, completely positive map, then by \cite{borys:furstenberg}*{Lemma 3.4}, then it induces a completely positive map  $\phi_x : A(x) \to B(x)$ for every $x \in X$.

\begin{definition}
Let $A$ be a $C_{0}(X)$-algebra.
\begin{itemize}
\item Given an open subset $U$ of $X$, the
\emph{restriction} of $A$ to $U$, denoted by $A_{U}$, is the $C_{0}(U)$-algebra $C_{0}(U)A$.
\item Similarly, given a closed subset $F$ of $X$, the \emph{restriction} of $A$ to $F$, denoted by $A_{F}$, is the $C_{0}(F)$-algebra $A/[m_{F}A]$, where
\[
m_{F}=\{f \in C_{0}(X) \mid f(x)=0 \text{ for all } x \in F\}.  
\] 
\end{itemize}
\end{definition}

Let $A$ be a $C_0(X)$-algebra, $B$ a $C_0(Y)$-algebra and consider the $*$-homomorphism
$$  
C_{0}(X) \otimes C_{0}(Y) \to \mathcal{Z}\mathcal{M}(A) \otimes
\mathcal{Z}\mathcal{M}(B) \rightarrow \mathcal{Z}\mathcal{M}(A
\otimes^{\mathrm{max}} B). $$By \cite{blanchard:deformations}*{Corollary 3.16}, $A \otimes^{\mathrm{max}} B$ together with the above $*$-homomorphism becomes a $C_{0}(X \times Y)$-algebra. Moreover, for $(x,y) \in X \times Y$, the fiber $(A \otimes^{\mathrm{max}} B)(x,y)$ can naturally be identified with $A(x) \otimes^{\mathrm{max}} B(y)$. Strictly speaking, \cite{blanchard:deformations}*{Corollary 3.16} holds only if $X$ is compact. However, as noted in
\cite{echterhoff-williams:c_0-actions}, the same proof holds when the space is $X$ is merely locally compact; see the discussion before Definition 2.3 of \cite{echterhoff-williams:c_0-actions}. 

\begin{definition}
Let $A$ and $B$ be two $C_{0}(X)$-algebras. The \emph{balanced  or relative tensor product}, denoted by $A \otimes^{\mathrm{max}}_{C_{0}(X)} B$, is the $C_{0}(X)$-algebra $(A \otimes^{\mathrm{max}} B)_{\Delta(X)}$, obtained from restricting the $C_{0}(X \times X)$-algebra $A \otimes^{\mathrm{max}} B$ to the diagonal \[\Delta(X)=\{(x,x) \in X \times X \mid x \in X\}.\]  
\end{definition}

Instead of the maximal tensor product, we could have as well chosen the minimal tensor product $\otimes^{\mathrm{min}}$ in the above definition to define $\otimes^{\mathrm{min}}_{C_{0}(X)}$. However, $\otimes^{\mathrm{min}}_{C_{0}(X)}$ is not associative; see \cite{blanchard:deformations}*{Section 3.3}. There are at least two more different ways of defining $\otimes^{\mathrm{max}}_{C_{0}(X)}$ (and of course, $\otimes^{\mathrm{min}}_{C_{0}(X)}$); see \cite{kasparov:equivariant-kk}*{Definition 1.6} and \cite{echterhoff-williams:c_0-actions}*{Definition 2.5}. The latter considers the (closed, two-sided) ideal \[I:=\{ fa \otimes^{\mathrm{max}} b - a \otimes^{\mathrm{max}}fb \mid f \in C_{0}(X), a \in A, b \in B\}\]of $A \otimes^{\mathrm{max}} B$ and defines $A \otimes^{\mathrm{max}}_{C_{0}(X)} B$ to be the quotient $A \otimes^{\mathrm{max}} B/I$. Using the relative tensor product, we can now define pullbacks.

\begin{definition}
Let $A$ be a $C_{0}(X)$-algebra, $Y$ be a locally compact, Hausdorff space, and $p : Y \rightarrow X$ be a continuous map. The \emph{pullback}, denoted by $p^{*}A$, is the $C_{0}(Y)$-algebra $A \otimes_{C_{0}(X)} C_{0}(Y)$.
\end{definition}

Note that, $p^*A$ is, a priori, only a $C_0(X)$-algebra; the $C_0(Y)$-algebra structure comes from the canonical inclusion $C_0(Y) \rightarrow \mathcal{M}(A \otimes_{C_0(X)} C_0(Y))$. Note further, that it is not necessary to specify whether we use the minimal tensor product $\otimes^{\mathrm{min}}$ or the maximal tensor product $\otimes^{\mathrm{max}}$ because $C_{0}(Y)$ is nuclear. Alternatively, one can define $p^{*}A$ as the restriction of the $C_{0}(X \times Y)$-algebra $A \otimes C_{0}(Y)$ to the closed set $\{(p(y),y) \mid y \in Y\}$ of $X \times Y$. For $y \in Y$, $(p^{*}A)(y)$ is naturally identified with $A(p(y))$. Given two $C_{0}(X)$-algebras $A$ and $B$, one can identify $p^{*}A \otimes^{\mathrm{max}}_{C_{0}(Y)} p^{*}B$ with $p^{*}(A \otimes^{\mathrm{max}}_{C_{0}(X)} B)$; similar identification holds if we replace $\otimes^{\mathrm{max}}$ with $\otimes^{\mathrm{min}}$. Given a $C_{0}(X)$-linear, $*$-homomorphism (or a completely positive map) $\phi : A \rightarrow B$, there is a $C_{0}(Y)$-linear, $*$-homomorphism (respectively, a completely positive map) $p^{*}\phi : p^{*}A \rightarrow p^{*}B$. Finally, by \cite{popescu:equivariant-e-theory}*{Corollary 1.26}, the pullback functor is \emph{exact}, i.e., if  
\[
0 \rightarrow I \rightarrow A \rightarrow B \rightarrow 0
\]is a short exact sequence of $C_{0}(X)$-algebras and $C_{0}(X)$-linear, $*$-homomorphisms, then
\[
0 \rightarrow p^{*}I \rightarrow p^{*}A \rightarrow p^{*}B \rightarrow 0,
\]too is a short exact sequence of $C_{0}(Y)$-algebras and $C_{0}(Y)$-linear $*$-homomorphisms, for every continuous map $p : Y \rightarrow X$. This is will be important for us. In particular, this implies that the ``fiber'' functor, i.e., that sends $A$ to $A(x)$, being the pullback functor via ${x} \rightarrow X$, is exact.

Before going to the definition of a groupoid action on a $C^{*}$-algebra, let us mention that oftentimes, it is helpful to view a $C_{0}(X)$-algebra $A$ as an (upper semi-continuous) bundle of $C^{*}$-algebras. For that purpose, we set $\A=\bigsqcup_{x \in X} A(x)$ and let $\pi : \A \to X$ to be the projection map. We topologize $\mathcal{A}$ by taking the subsets 
\begin{equation}
W(a, U, \epsilon)=\{b \in \A \mid \pi(b) \in U \text{ and } \left\lVert a(\pi(b))-b  \right\rVert < \epsilon \}, 
\end{equation}
where $a \in A$, $U \subset X$ is open and $\epsilon >0$, as a basis. Let $\Gamma_0(X;\A)$ denote the algebra of continuous sections of the upper semi-continuous $C^{*}$-bundle $\A$ vanishing at infinity, then $A$ and $\Gamma_0(X;\A)$ are isomorphic as $C_{0}(X)$-algebras. Moreover, this process can be reversed; see, for example,
\cite{kirchberg-wassermann:continuous-bundles}*{Introduction}, \cite{rieffel:continuous-fields}*{Section 1}, or \cite{williams:book-crossed-products}*{Appendix C} for more details. As an example, let $A$ be a $C_0(X)$-algebra and let $\pi : \A \to X$ be the upper semi-continuous $C^*$-bundle associated to $A$. If $p : Y \to X$ is a continuous map, one defines the pullback $p^*\A=\{(c, y) \in \A \times Y \mid \pi(c)=p(y)\}$, which is again an upper semi-continuous $C^*$-bundle. It follows from \cite{raeburn-williams:pullbacks}*{Proposition 1.3} that $p^*A$ and $\Gamma_{0}(X;p^{*} \mathcal{A})$ are isomorphic as $C_{0}(Y)$-algebras.  

Returning back to defining groupoid actions on $C^{*}$-algebras, suppose now that we have a $C_{0}(G^{(0)})$-algebra $A$ and a $C_{0}(G)$-linear, $*$-isomorphism $\alpha : s^{*}A \rightarrow r^{*}A$. Then $\alpha$ induces $*$-isomorphisms $\alpha_{g} : A(s(g)) \rightarrow A(r(g))$ for each $g \in G$.

\begin{definition}[{\cite{le_gall:equivariant-kk}*{Definition 3.5}}]
Let $G$ be a groupoid. A $G$-$C^*$-\emph{algebra} is a triple $(A,\theta_{A},\alpha)$ where $(A,\theta_{A})$ is a $C_{0}(G^{(0)})$-algebra and $\alpha$ is a $C_0(G)$-linear, $*$-isomorphism $\alpha : s^*A \to r^*A$ such that $\alpha_g \circ \alpha_h=\alpha_{gh}$, whenever $(g,h) \in G^{(2)}$.
\end{definition}

Again by slightly abusing notation, we will only write $(A,\alpha)$ for a $G$-$C^*$-algebra $(A,\theta_{A},\alpha)$, suppressing the structural map $\theta_{A}$. The following lemma puts the above definition in terms of the bundle picture.

\begin{lemma}[{\cite{muhly-williams:renault-theorem}*{Lemma 4.3}}]  \label{lem:continuity} 
Let $G$ be a groupoid and $(A,\theta_{A})$ be a $C_0(G^{(0)})$-algebra. Suppose that $(\alpha_g)_{g \in G}$ is a family of $*$-isomorphisms $\alpha_g : A(s(g)) \to A(r(g))$ such that $\alpha_g \circ \alpha_h=\alpha_{gh}$, whenever $(g,h) \in
G^{(2)}$. Then the following are equivalent.
\begin{itemize}
\item There exists a $C_0(G)$-linear, $*$-isomorphism $\alpha : s^*A \to r^*A$ such that $\alpha(s^*(a))(g)=\alpha_g(a(s(g)))$, making $(A,\theta_{A},\alpha)$ into a $G$-$C^*$-algebra.  
\item For $(g,a) \in s^{*} \mathcal{A}$, setting $\tilde{\alpha}(g,a)=\alpha_g(a)$ makes $(\mathcal{A},\pi, \tilde{\alpha})$ into a $G$-space.
\item For any $a \in A$, the map $g \mapsto \alpha_g(a(s(g)))$ is a bounded continuous section of $G \to r^*\mathcal{A}$.
\end{itemize}
\end{lemma}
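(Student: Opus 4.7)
The plan is to leverage the dictionary between $C_0(X)$-algebras and upper semi-continuous $C^*$-bundles recalled above. Under the identifications $s^*A \cong \Gamma_0(G; s^*\A)$ and $r^*A \cong \Gamma_0(G; r^*\A)$, a $C_0(G)$-linear $*$-homomorphism $s^*A \to r^*A$ is the same data as a continuous bundle map $s^*\A \to r^*\A$ over $G$; and the given fiberwise family $(\alpha_g)_{g \in G}$ already supplies a bundle map $\tilde\alpha$ set-theoretically. The cocycle relation $\alpha_g \circ \alpha_h = \alpha_{gh}$ is fiberwise and forces $\alpha_{u_x} = \mathrm{id}_{A(x)}$, so it transfers through each formulation and supplies the remaining $G$-space axioms in (2) and the $*$-homomorphism property in (1) automatically. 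Thus (1), (2), (3) amount to three articulations of the continuity of $\tilde\alpha$.

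For (1) $\Leftrightarrow$ (3): if (1) holds, $\alpha(s^*(a)) \in r^*A$ is, by the prescribed formula, the section $g \mapsto \alpha_g(a(s(g)))$, hence bounded and continuous. Conversely, given (3), set $\alpha(f \cdot s^*(a))(g) := f(g)\alpha_g(a(s(g)))$ on the elementary tensors spanning a dense subspace of $s^*A = A \otimes^{\mathrm{max}}_{C_0(G^{(0)})} C_0(G)$. This respects the balanced tensor product relation because each $\alpha_g$ is $C_0(G^{(0)})$-fiber-linear, and the values lie in $\Gamma_0(G; r^*\A)$ by (3). Since the induced fiberwise maps are $\alpha_g$, each a $*$-isomorphism, \cite{le_gall:equivariant-kk}*{Proposition 3.1} promotes the extension to a $C_0(G)$-linear $*$-isomorphism $s^*A \to r^*A$.

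For (2) $\Leftrightarrow$ (3): if $\tilde\alpha$ is continuous, composing it with the continuous section $g \mapsto (g,a(s(g)))$ of $s^*\A$ yields the continuous section $g \mapsto \alpha_g(a(s(g)))$ of $r^*\A$, bounded since each $\alpha_g$ is isometric. Conversely, to check continuity of $\tilde\alpha$ at $(g_0,c_0) \in s^*\A$, use surjectivity of $\pi_{s(g_0)}$ to pick $b \in A$ with $b(s(g_0)) = c_0$. For any net $(g_\lambda, c_\lambda) \to (g_0, c_0)$ in $s^*\A$, the definition of convergence in $\A$ yields $\|c_\lambda - b(s(g_\lambda))\| \to 0$, and since each $\alpha_{g_\lambda}$ is isometric,
\[
\|\alpha_{g_\lambda}(c_\lambda) - \alpha_{g_\lambda}(b(s(g_\lambda)))\| = \|c_\lambda - b(s(g_\lambda))\| \to 0,
\]
while (3) applied to $b$ gives $\alpha_{g_\lambda}(b(s(g_\lambda))) \to \alpha_{g_0}(c_0)$ in $\A$. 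Combining, $\tilde\alpha(g_\lambda, c_\lambda) \to \tilde\alpha(g_0, c_0)$.

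The main obstacle, though mild, is this last continuity argument: the topology on $\A$ is only upper semi-continuous, so convergence must be verified through the basic neighbourhoods $W(b, U, \epsilon)$, using the fact that through any point of $\A$ there is a continuous section defined on a neighbourhood of its projection in $G^{(0)}$. Everything else is an unpacking of definitions together with the cocycle identity.
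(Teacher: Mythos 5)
Your proof is correct, but note that the paper itself contains no proof of this lemma: it is imported verbatim from \cite{muhly-williams:renault-theorem}*{Lemma 4.3}, and your argument—working through the upper semi-continuous bundle picture and establishing $(1)\Leftrightarrow(3)$ and $(2)\Leftrightarrow(3)$, with the cocycle identity supplying $\alpha_{u_x}=\mathrm{id}_{A(x)}$ and the algebraic axioms—is essentially the standard one found in that source. Two compressions deserve an extra line each: $s^*(a)$ is only a bounded continuous section, i.e.\ a multiplier of $s^*A$, so in $(1)\Rightarrow(3)$ and in defining $\alpha$ you should work with $f\cdot s^*(a)$ for $f\in C_c(G)$; and in $(3)\Rightarrow(1)$, before \cite{le_gall:equivariant-kk}*{Proposition 3.1} can be invoked you must extend the densely defined map to all of $s^*A$, which holds because it acts fiberwise by the isometries $\alpha_g$ and is therefore isometric on the dense span of the elementary tensors.
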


We end this section with one more definition.

\begin{definition}
Let $G$ be a groupoid, and $(A,\alpha)$ and $(B,\beta)$ be two $G$-algebras. A $C_0(G^{(0)})$-linear map $\phi \colon A \to B$ is said to be $G$-\emph{equivariant} if $(r^*\phi) \circ \alpha= \beta \circ (s^*\phi)$, i.e., for all $g \in G$, $\phi_{r(g)} \circ \alpha_g=\beta_g \circ \phi_{s(g)}$.
\end{definition}

\section{Equivariant extensions and the Busby invariant}\label{sec:busby} This section describes how for a groupoid $G$, extensions of $G$-$C^*$-algebras are in one-one correspondence with their Busby invariants. This extends well-known results from \cite{thomsen:equivariant-kk}*{Section 2} and \cite{kasparov:operator-k}*{Section 7} in the case when $G$ is a group and from \cite{blanchard:subtriviality}*{Section 1} for $C_0(X)$-algebras with $X$ compact. Most of the results are straightforward; however, for the readers' convenience, we provide full details. To begin with, we will throughout this article write $q_A : \mathcal{M}(A) \rightarrow \mathcal{Q}(A)$ for the canonical projection. Let $X$ be a space and let $A$ be a separable, $C_0(X)$-algebra. For any $x \in X$, we define 
\begin{equation}\label{eq:xi}
\xi_{x} \colon \mathcal{M}(A) \to \mathcal{M}(A(x)), \quad \xi_x(m)(e(x))=(me)(x),
\end{equation}
for any $m \in \mathcal{M}(A)$ and $e \in A$, i.e., $\xi_x$ is the canonical surjective extension of $\pi_x : A \to A(x)$, see, for instance, \cite{akemann-pedersen-tomiyama:multipliers}*{Theorem 4.2}. In turn, $\xi_x$ induces
\begin{equation}\label{eq:zeta}
\zeta_x : \mathcal{Q}(A) \to \mathcal{Q}(A(x))  
\end{equation}
such that the following diagram 
\begin{equation}\label{eq:xi-zeta-square}
\begin{tikzcd}
\mathcal{M}(A) \arrow[d,"\xi_x"] \arrow[r,"q_A"] & \mathcal{Q}(A) \arrow[d,"\zeta_x"] \\
\mathcal{M}(A(x)) \arrow[r,"q_{A(x)}"] & \mathcal{Q}(A(x))
\end{tikzcd}
\end{equation}     
commutes. It is an easy check that if $f \in \mathfrak{m}_x$ and $m \in \mathcal{M}(A)$, then
\begin{equation}\label{eq:xi-maximal-ideal}
\xi_x(\theta_A(f)m)=\zeta_x(q_A(\theta_A(f)m))=0.  
\end{equation}
Given a $C_0(X)$-extension of $A$ by another $C_0(X)$-algebra $I$, i.e., a short exact sequence 
\begin{equation}\label{eq:extension}\tag{$\eta$}
0 \rightarrow I \xrightarrow{\iota} B \xrightarrow{p} A \rightarrow 0,
\end{equation}
of $C_0(X)$-algebras and $C_0(X)$-linear, $*$-homomorphisms $\iota : I \rightarrow B$ and $p : B \rightarrow A$, for any $x \in X$, the sequence 
\begin{equation}\label{eq:extension-fiber}\tag{$\eta(x)$}
0 \rightarrow I(x) \xrightarrow{\iota_x} B(x) \xrightarrow{p_x} A(x) \rightarrow 0,  
\end{equation}
is exact; conversely, given \eqref{eq:extension} such that for each $x \in X$, \eqref{eq:extension-fiber} is exact then \eqref{eq:extension} is exact. This follows from the fact that the ``fiber'' functor, i.e., that sends $A$ to $A(x)$ is the pullback functor via ${x} \rightarrow X$, which is exact and \cite{popescu:equivariant-e-theory}*{Lemma 1.25}. For such an extension, we define 
\begin{equation}
\bar{\mu}(\eta) : B \to \mathcal{M}(I), \quad \bar{\mu}(\eta)(b)(e)=b \iota(e)
\end{equation}
for all $b \in B$ and $e \in I$. Note that we also have the $*$-homomorphism 
\begin{equation}\label{eq:mu-bar-fiber}
\bar\mu(\eta(x)) : B(x) \rightarrow \mathcal{M}(I(x))
\end{equation}
for the extension \eqref{eq:extension-fiber}. We observe that although $\mathcal{M}(I)$ is not a $C_0(X)$-algebra (see, for instance, \cite{williams:book-crossed-products}*{Remark C.14}), the $*$-homomorphism $\bar \mu$ satisfies a type of ``$C_0(X)$-linearity" in the sense of the following definition.

\begin{definition}\label{def:linearity-mu-bar}
Let $B$ and $I$ be $C_0(X)$-algebras. A $*$-homomorphism (or a completely positive map) $\phi : B \to \mathcal{M}(I)$ is said to be $C_0(X)$-linear if 
\[
\phi(\theta_A(f)b)=\theta_I(f)\phi(b),
\]for all $f \in C_0(X)$ and $b \in B$. 
\end{definition}

If $I$ is separable, then given a $C_0(X)$-linear $\phi : B \rightarrow \mathcal{M}(I)$, $b \in B$, $x \in X$ and $f \in \mathfrak{m}_x$, by \eqref{eq:xi-maximal-ideal}, it follows that
\[
\xi_x(\phi(\theta_B(f)b))=\xi_x(\theta_I(f)\phi(b))=0,
\]so that
\begin{equation}\label{eq:phi-fiber}
\phi_x : B(x) \to \mathcal{M}(I(x)), \quad \phi_x(b(x))=\xi_x(\phi(b))
\end{equation} 
is a well-defined $*$-homomorphism such that the following diagram
\begin{equation}\label{eq:phi-fiber-square}
\begin{tikzcd}
B \arrow[d,"\pi_x"] \arrow[r,"\phi"] & \mathcal{M}(I) \arrow[d,"\xi_x"] \\
B(x) \arrow[r,"\phi_x"] & \mathcal{M}(I(x))
\end{tikzcd}
\end{equation}
commutes. 

\begin{lemma}
The $*$-homomorphism 
\[
\bar\mu(\eta(x)) : B(x) \rightarrow \mathcal{M}(I(x))
\]
for the extension \eqref{eq:extension-fiber} coincides with the $*$-homomorphism  
\[
\bar\mu(\eta)_x : B(x) \rightarrow \mathcal{M}(I(x))
\]
as defined in \eqref{eq:phi-fiber} above, whenever $I$ is separable.
\end{lemma}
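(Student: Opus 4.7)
The plan is to verify the identity by evaluating both multipliers of $I(x)$ on elements of the form $e(x)$ for $e \in I$, which span all of $I(x)$ by surjectivity of $\pi_x^I$. The two expressions will both reduce to $b(x)\iota_x(e(x))$ after invoking the naturality $\iota \circ \pi_x^I = \pi_x^B \circ \iota$ of the quotient maps with respect to $\iota$.

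Before doing the comparison, I would first check that $\bar\mu(\eta)$ is $C_0(X)$-linear in the sense of Definition \ref{def:linearity-mu-bar}, so that $\bar\mu(\eta)_x$ is indeed well-defined via \eqref{eq:phi-fiber}. For $f \in C_0(X)$, $b \in B$, and $e \in I$, the $C_0(X)$-linearity of $\iota$ together with $\theta_I(f) \in \mathcal{Z}\mathcal{M}(I)$ yields
\[
\bar\mu(\eta)(\theta_B(f)b)(e) = (\theta_B(f)b)\iota(e) = b\iota(\theta_I(f)e) = (\theta_I(f)\bar\mu(\eta)(b))(e),
\]
as required.

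Then, for fixed $b \in B$ and $e \in I$, I would use \eqref{eq:xi} together with \eqref{eq:phi-fiber} to compute
\[
\iota_x\bigl(\bar\mu(\eta)_x(b(x))(e(x))\bigr) = \iota_x\bigl((\bar\mu(\eta)(b)\cdot e)(x)\bigr),
\]
where $\bar\mu(\eta)(b)\cdot e \in I$ is characterized by $\iota(\bar\mu(\eta)(b)\cdot e) = b\iota(e)$ in $B$. Naturality then gives $\iota_x((\bar\mu(\eta)(b)\cdot e)(x)) = \pi_x^B(\iota(\bar\mu(\eta)(b)\cdot e)) = (b\iota(e))(x) = b(x)\iota_x(e(x))$, and the right-hand side equals $\iota_x(\bar\mu(\eta(x))(b(x))(e(x)))$ by the very definition of $\bar\mu$ applied to the fiber extension \eqref{eq:extension-fiber}. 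Injectivity of $\iota_x$ yields equality of the values in $I(x)$, and the surjectivity of $\pi_x^I$ upgrades this to equality of the multipliers in $\mathcal{M}(I(x))$.

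The argument is essentially bookkeeping and I do not anticipate a real obstacle; the only input beyond diagram-chasing is the existence of the canonical surjective extension $\xi_x : \mathcal{M}(I) \to \mathcal{M}(I(x))$ supplied by \cite{akemann-pedersen-tomiyama:multipliers}, which is precisely the reason separability of $I$ has to be assumed in the statement.
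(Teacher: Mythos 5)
Your proposal is correct and takes essentially the same route as the paper: both arguments evaluate the two multipliers on elements $e(x)$, $e \in I$, and reduce, via the definition \eqref{eq:xi} of $\xi_x$, the definition of $\bar\mu(\eta)$, and the $C_0(X)$-linearity of $\iota$, to the single identity $(b\iota(e))(x)=b(x)\iota_x(e(x))$, concluding by surjectivity of the quotient maps. Your explicit verification that $\bar\mu(\eta)$ is $C_0(X)$-linear (which the paper only asserts before the lemma) and your detour through the injectivity of $\iota_x$ are careful bookkeeping of identifications the paper makes tacitly, not a different argument.
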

  
\begin{proof}
The proof is a plain computation which goes as follows. We fix $x \in X$, $b \in B$ and $e \in I$. We then have 
\allowdisplaybreaks{
\[
\begin{aligned}
\bar\mu(\eta)_x(\pi_x(b))(e(x)) &{}=\xi_x(\mu(\eta)(b))(e(x))\\
&{}=((\bar\mu(\eta)(b))(e))(x)\\
&{}=(b\iota(e))(x)\\
&{}=b(x)\iota_x(e(x))\\
&{}=\bar\mu(\eta(x))(b(x))e(x);
\end{aligned}  
\]
}where the first equality is by commutativity of \eqref{eq:phi-fiber-square}; the second equality is by definition of $\xi_x$; the third equality is by definition of $\bar\mu(\eta)$; and the fifth equality is by definition of $\bar\mu(\eta(x))$. Since $\pi_x$ is surjective, the proof is complete.
\end{proof}

In particular, the following square
\begin{equation}\label{eq:mu-bar-fiber-square}
\begin{tikzcd}
B \arrow[d,"\pi_x"] \arrow[r,"\bar \mu(\eta)"] & \mathcal{M}(I) \arrow[d,"\xi_x"]\\
B(x) \arrow[r,"\bar \mu(\eta(x))"] & \mathcal{M}(I(x))
\end{tikzcd}
\end{equation}
commutes. As in the ordinary $C^*$-algebraic case, we define the \emph{Busby invariant} 
\begin{equation}\label{eq:busby}
\mu(\eta) : A \to \mathcal{Q}(I), \quad \mu(\eta)(p(b))=q_I(\bar{\mu}(\eta)(b)). 
\end{equation}
Note that we also have the Busby invariant 
\begin{equation}\label{eq:busby-fiber}
\mu(\eta(x)) : A(x) \rightarrow \mathcal{Q}(I(x))
\end{equation}
for the extension \eqref{eq:extension-fiber}. Again although $\mathcal{Q}(I)$ is not a $C_0(X)$-algebra, the Busby invariant $\mu$ satisfies a type of ``$C_0(X)$-linearity" in the sense of the following definition.

\begin{definition}\label{def:linearity-Busby}
Let $A$ and $I$ be $C_0(X)$-algebras. A $*$-homomorphism (or a completely positive map) $\psi : A \to \mathcal{Q}(I)$ is said to be $C_0(X)$-linear if 
\[
\psi(\theta_A(f)a)=q_I\theta_I(f)\psi(a),
\]for all $f \in C_0(X)$ and $a \in A$. 
\end{definition}

Once again, if $I$ is separable, given such a $C_0(X)$-linear $\psi : A \rightarrow \mathcal{Q}(I)$, $a \in A$, $x \in X$ and $f \in \mathfrak{m}_x$, by \eqref{eq:xi-maximal-ideal}, it follows that
\[
\zeta_x(\psi(\theta_A(f)a))=\zeta_x(q_I(\theta_I(f)))\zeta_x(\psi(a))=0,
\]so that
\begin{equation}\label{eq:psi-fiber}
\psi_x : A(x) \to \mathcal{Q}(I(x)), \quad \psi_x(a(x))=\zeta_x(\psi(a))
\end{equation} 
is a well-defined $*$-homomorphism such that the following diagram
\begin{equation}\label{eq:psi-fiber-square}
\begin{tikzcd}
A \arrow[d,"\pi_x"] \arrow[r,"\psi"] & \mathcal{Q}(I) \arrow[d,"\zeta_x"] \\
A(x) \arrow[r,"\psi_x"] & \mathcal{Q}(I(x))
\end{tikzcd}
\end{equation}
commutes.

\begin{lemma}
The Busby invariant  
\[
\mu(\eta(x)) : A(x) \rightarrow \mathcal{Q}(I(x))
\]
for the extension \eqref{eq:extension-fiber} coincides with the $*$-homomorphism  
\[
\mu(\eta)_x : A(x) \rightarrow \mathcal{Q}(I(x))
\]
as defined in \eqref{eq:psi-fiber} above, whenever $I$ is separable.
\end{lemma}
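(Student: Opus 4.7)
The plan is to do a straightforward diagram chase, reducing everything to the already-established identification $\bar\mu(\eta)_x = \bar\mu(\eta(x))$ at the multiplier level, and then pushing down through $q_I$ using the commutative square \eqref{eq:xi-zeta-square}. Since the target formulas involve $p$ and $p_x$ applied to $b$ and $b(x)$, and $\pi_x \circ p = p_x \circ \pi_x$ by $C_0(X)$-linearity of $p$, it is enough to evaluate both $\mu(\eta(x))$ and $\mu(\eta)_x$ on an element of the form $p_x(b(x))$ for $b \in B$ and check they agree; surjectivity of $p_x$ (and of $\pi_x$) then closes the argument.

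First I would unpack $\mu(\eta)_x(p_x(b(x)))$. Using $p_x(b(x)) = \pi_x(p(b))$ and the definition \eqref{eq:psi-fiber} of the fiber of a $C_0(X)$-linear map into $\mathcal{Q}(I)$, this equals $\zeta_x(\mu(\eta)(p(b))) = \zeta_x(q_I(\bar\mu(\eta)(b)))$. Now invoke the commutative square \eqref{eq:xi-zeta-square} to rewrite this as $q_{I(x)}(\xi_x(\bar\mu(\eta)(b)))$. Next, the commutativity of \eqref{eq:phi-fiber-square} applied to $\phi = \bar\mu(\eta)$ gives $\xi_x(\bar\mu(\eta)(b)) = \bar\mu(\eta)_x(\pi_x(b))$, and the preceding lemma identifies this with $\bar\mu(\eta(x))(b(x))$. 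Putting the chain together yields
\[
\mu(\eta)_x(p_x(b(x))) = q_{I(x)}(\bar\mu(\eta(x))(b(x))) = \mu(\eta(x))(p_x(b(x))),
\]
where the last equality is the definition \eqref{eq:busby} of the Busby invariant applied to the fiber extension \eqref{eq:extension-fiber}.

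Since $p_x$ is surjective, the two $*$-homomorphisms agree on all of $A(x)$, which is the conclusion. There is no real obstacle here; the only thing to be careful about is that $\mu(\eta)$ is not defined as a map out of $B$ but out of $A$, so one must genuinely use the factorization through $p$ in order to make the comparison sensible, and one must use the $C_0(X)$-linearity of $p$ to commute it past $\pi_x$. Separability of $I$ is needed only insofar as it was required to make sense of the fiber constructions $\xi_x$, $\zeta_x$ and the induced fibers of $C_0(X)$-linear maps into $\mathcal{M}(I)$ and $\mathcal{Q}(I)$, which was already invoked in the previous lemma.
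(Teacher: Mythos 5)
Your proposal is correct and follows essentially the same route as the paper: the identical chain of equalities through \eqref{eq:psi-fiber-square}, the definition \eqref{eq:busby}, the square \eqref{eq:xi-zeta-square}, the previous lemma's identification $\bar\mu(\eta)_x=\bar\mu(\eta(x))$, and surjectivity of $p_x$ (equivalently, of $p$ and $\pi_x$). Your splitting of the paper's square \eqref{eq:mu-bar-fiber-square} into \eqref{eq:phi-fiber-square} plus the preceding lemma is only a cosmetic repackaging of the same step.
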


\begin{proof}
The proof is a plain computation which goes as follows. We fix $x \in X$ and $b \in B$. We then have 
\allowdisplaybreaks{
\[
\begin{aligned}
\mu(\eta)_x(\pi_x(p(b))) &{}=\zeta_x(\mu(\eta)(p(b)))\\
&{}=\zeta_x(q_I(\bar \mu(\eta)(b)))\\
&{}=q_{I(x)}\xi_x(\bar \mu(\eta)(b))\\
&{}=q_{I(x)}\bar \mu(\eta(x))(\pi_x(b))\\
&{}=\mu(\eta(x))(p_x\pi_x(b))\\
&{}=\mu(\eta(x))(\pi_xp(b));
\end{aligned}  
\]
}where the first equality is by commutativity of \eqref{eq:psi-fiber-square}; the second equality is by definition of $\mu(\eta)$; the third equality is by commutativity of \eqref{eq:xi-zeta-square}; the fourth equality is by commutativity of \eqref{eq:mu-bar-fiber-square}; the fifth equality is by definition of $\mu(\eta(x))$ and the sixth equality is because $p$ is $C_0(X)$-linear. Since $p$ and $\pi_x$ are surjective, the proof is complete.
\end{proof}

In particular, the following square
\begin{equation}\label{eq:mu-fiber-square}
\begin{tikzcd}
A \arrow[d,"\pi_x"] \arrow[r,"\mu(\eta)"] & \mathcal{Q}(I) \arrow[d,"\zeta_x"]\\
A(x) \arrow[r,"\mu(\eta(x))"] & \mathcal{Q}(I(x))
\end{tikzcd}
\end{equation}
commutes.

\begin{proposition}\label{prop:c0x-busby}
Let $I$ and $A$ be two separable, $C_0(X)$-algebras and let $\mu : A \to \mathcal{Q}(I)$ be a $C_0(X)$-linear, $*$-homomorphism. Set 
\[
B=\{(m,a) \in \mathcal{M}(I) \oplus A \mid q_I(m)=\mu(a) \},
\]
\[
\iota : I \rightarrow B, e \mapsto (e,0), \quad p : B \rightarrow A, (m,a) \mapsto a.
\]
Then the following statements hold.
\begin{enumerate}
  \item $B$ can be uniquely made into a $C_0(X)$-algebra such that $\iota$ and $p$ are $C_0(X)$-linear.
  \item For any $x \in X$, the fiber $B(x)$ is isomorphic to 
  \[B_x:=\{(m', a') \in \mathcal{M}(I(x)) \oplus A(x) \mid q_{I(x)}(m')=\mu_x(a') \}.
  \]
  \item The extension 
  \begin{equation}\tag{$\eta$}\label{eq:extension-1}
    0 \rightarrow I \xrightarrow{\iota} B \xrightarrow{p} A \rightarrow 0
  \end{equation}is a $C_0(X)$-extension such that $\mu(\eta)=\mu$.
\end{enumerate}
\end{proposition}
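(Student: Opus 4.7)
The plan is to follow the classical Busby construction (as in Thomsen and Kasparov) while carefully tracking $C_0(X)$-linearity at each step. Purely as a $C^*$-algebra, $B$ is the pullback of $\mu : A \to \mathcal{Q}(I)$ along the surjection $q_I$; the map $\iota$ is obviously injective, $p$ is surjective because every $\mu(a)$ lifts to some $m \in \mathcal{M}(I)$ yielding $(m,a) \in B$, and $\ker p = \iota(I)$ is immediate. Thus the sequence in item $(3)$ is already short exact at the level of $C^*$-algebras, before any structural considerations.

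For $(1)$, I would define $\theta_B : C_0(X) \to \mathcal{Z}\mathcal{M}(B)$ by $\theta_B(f)(m,a) := (\theta_I(f)m,\, \theta_A(f)a)$. The image lies in $B$ precisely by Definition \ref{def:linearity-Busby}:
\[
q_I(\theta_I(f)m) = q_I(\theta_I(f))\,\mu(a) = \mu(\theta_A(f)a).
\]
The routine verifications (double centralizer, $*$-homomorphism into the center, and $C_0(X)$-linearity of $\iota$ and $p$) present no difficulty. For uniqueness, any competitor $\tilde\theta_B$ making $\iota$ and $p$ $C_0(X)$-linear must satisfy $\tilde\theta_B(f)(m,a) = (m', \theta_A(f)a)$ for some $m'$; multiplying on the right by $\iota(e) = (e,0)$ and invoking $C_0(X)$-linearity of $\iota$ forces $m' = \theta_I(f)m$. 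The only genuinely non-formal point is nondegeneracy of $\theta_B$: given $(m,a) \in B$ and $\epsilon > 0$, nondegeneracy of $\theta_A$ supplies $f \in C_0(X)$ and $a' \in A$ with $\|a - \theta_A(f)a'\| < \epsilon$; lifting $a'$ through $p$ to $b' \in B$, the element $(m,a) - \theta_B(f)b'$ has image of norm $<\epsilon$ in $A$, hence lies within $\epsilon$ of $\iota(I)$, after which nondegeneracy of $\theta_I$ absorbs the residual $I$-part.

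For $(2)$, my plan is the five-lemma. The assignment $B \to B_x$, $(m,a) \mapsto (\xi_x(m), \pi_x(a))$, lands in $B_x$ by commutativity of \eqref{eq:xi-zeta-square} and \eqref{eq:mu-fiber-square}, and vanishes on $[\theta_B(\mathfrak{m}_x)B]$ by \eqref{eq:xi-maximal-ideal}, so it descends to a $*$-homomorphism $B(x) \to B_x$. The exactness of the fiber (pullback) functor, recalled in Section \ref{sec:groupoids-actions} from \cite{popescu:equivariant-e-theory}*{Corollary 1.26}, applied to the $C_0(X)$-extension constructed in $(3)$, produces $0 \to I(x) \to B(x) \to A(x) \to 0$, while the classical Busby construction provides $0 \to I(x) \to B_x \to A(x) \to 0$. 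These fit into a commutative ladder whose outer vertical maps are the identities, and the five-lemma delivers $B(x) \cong B_x$.

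Item $(3)$ is then almost immediate: exactness and $C_0(X)$-linearity of $\iota,p$ have been established, and the Busby invariant is computed by observing that for $b = (m, a) \in B$ and $e \in I$, $b\iota(e) = (m,a)(e,0) = (me, 0) = \iota(me)$, so $\bar\mu(\eta)(b) = m$ in $\mathcal{M}(I)$, and hence $\mu(\eta)(a) = \mu(\eta)(p(b)) = q_I(m) = \mu(a)$. The principal obstacle is the nondegeneracy of $\theta_B$ in $(1)$; the remainder is essentially bookkeeping, but care is needed in $(2)$ to verify that the five-lemma square genuinely commutes along the identifications of $I(x)$ and $A(x)$ at the two ends of the two extensions.
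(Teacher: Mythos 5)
Your proposal is correct, and its overall architecture coincides with the paper's: the same structural map $\theta_B(f)(m,a)=(\theta_I(f)m,\theta_A(f)a)$, the same uniqueness trick (cutting against $\iota(I)$ and applying $p$; the paper runs it with an approximate unit $(u_n,0)$, you with an arbitrary $\iota(e)$, which is if anything cleaner), and the same fiber comparison map $(m,a)\mapsto(\xi_x(m),a(x))$. Two steps genuinely deviate in execution, both validly. For nondegeneracy of $\theta_B$, the paper sets $B_X=[C_0(X)B]$, shows $0\to I\to B_X\to A\to 0$ is exact, and concludes $B=B_X$ by the five lemma; your direct $\epsilon$-argument (approximate $a$, lift, observe that $\lVert p(b)\rVert<\epsilon$ forces $b$ within $\epsilon$ of $\ker p=\iota(I)$, then use $\iota(I)\subseteq[C_0(X)B]$) is the hands-on unwinding of the same fact and is complete as stated. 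For (2), the paper applies the snake lemma to the ladder $B\to B_x$ to get $0\to[\mathfrak{m}_xI]\to\ker(\phi)\to[\mathfrak{m}_xA]\to 0$ and then a second five-lemma comparison to conclude $\ker(\phi)=[\mathfrak{m}_xB]$; you instead pass to the quotient first and apply the five lemma once to the ladder of short exact sequences $0\to I(x)\to B(x)\to A(x)\to 0$ (from exactness of the fiber functor applied to the extension of (3) --- no circularity, since (3)'s exactness needs only (1)) and $0\to I(x)\to B_x\to A(x)\to 0$ (the classical pullback at the fiber). Your route is marginally shorter and rests on the same external input (exactness of the pullback/fiber functor from \cite{popescu:equivariant-e-theory}), and you correctly flag the one point requiring care, namely commutativity of the ladder at the $I(x)$- and $A(x)$-ends. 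One trivial citation slip: to see that $(\xi_x(m),a(x))\in B_x$ you should invoke \eqref{eq:psi-fiber-square} (the defining square for $\mu_x$ of a general $C_0(X)$-linear map into $\mathcal{Q}(I)$) rather than \eqref{eq:mu-fiber-square}, since in this proposition $\mu$ is not a priori the Busby invariant of an extension --- the two squares assert the same identity, but the latter is stated only for $\mu(\eta)$.
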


\begin{proof}
We begin by observing that since $\mu$ is $C_0(X)$-linear, for any $f \in C_0(X)$, and $(m,a) \in B$, $(\theta_I(f)m,\theta_A(f)a)$ is in $B$ too. Let then $\theta_B : C_0(X) \rightarrow \mathcal{M}(B)$ to be the $*$-homomorphism defined as $\theta_B(f)(m,a)=(\theta_I(f)m,\theta_A(f)a)$, for $f \in C_0(X)$, and $(m,a) \in B$. It is easy to see that $\theta_B(C_0(X)) \subset \mathcal{ZM}(B)$. It is equally easy to see that $\iota$ and $p$ are $C_0(X)$-linear. Next, we show that $\theta_B$ is nondegenerate. For that, we set $B_X=[C_0(X)B]$. By $C_0(X)$-linearity of $\iota$ and $p$ together with exactness of \eqref{eq:extension-1}, we obtain that 
\[
0 \rightarrow I \xrightarrow{\iota} B_X \xrightarrow{p} A \rightarrow 0, 
\]is also exact. Thus we have the following commutative diagram
\[
\begin{tikzcd}
0 \arrow[r] & I \arrow[r,"\iota"] \arrow[d,sloped,"="] & B_X \arrow[r,"p"] \arrow[d,sloped,"\subseteq"] & A \arrow[r] \arrow[d,sloped,"="] & 0\\
0 \arrow[r] & I \arrow[r,"\iota"] & B \arrow[r,"p"] & A \arrow[r] & 0,
\end{tikzcd}
\]with exact rows. The Five lemma then implies that $B=B_X$, i.e., $B$ is a $C_0(X)$-algebra. To conclude (1), we now show uniqueness of this $C_0(X)$-algebra structure. For that, let $\theta'_B : C_0(X) \rightarrow \mathcal{ZM}(B)$ be another nondegenerate, $*$-homomorphism which makes $B$ into a $C_0(X)$-algebra such that $\iota$ and $p$ are $C_0(X)$-linear. We fix $f \in C_0(X)$, $(m,a) \in B$, and write $\theta'_B(f)(m,a)=(m',a')$. Let $(u_n)_{n \in \mathbb{N}}$ be an approximate unit of $I$. Then we note that $(m'u_n,0)=\theta'_B(f)(m,a)(u_n,0)=(m,a)\theta'_B(f)(u_n,0)=(m,a)(\theta_I(f)u_n,0)=(\theta_I(f)mu_n,0)$. Letting $n \rightarrow \infty$, we see that $m'=\theta_I(f)m$; here we have used the centrality of $\theta_B$ and $\theta'_B$ together with the $C_0(X)$-linearity of $\iota$. Now since $p$ is also $C_0(X)$-linear, we obtain that $a'=\theta_A(f)a$, by applying $p$ to the both sides of $\theta'_B(f)(m,a)=(m',a')$. Thus $\theta_B(f)(m,a)=(\theta_I(f)m,\theta_A(f)a)$ and this concludes (1). Moreover, (3) follows as well since \eqref{eq:extension-1} is in particular an ordinary $C^*$-extension.

For (2), we fix $x \in X$. We observe that if $(m,a)$ is in $B$, then by commutativity of \eqref{eq:xi-zeta-square} and \eqref{eq:psi-fiber-square}, $(\xi_x(m),a(x))$ is in $B_x$. We set $\iota'_x : I(x) \rightarrow B_x$, $e(x) \mapsto (e(x),0)$, $p'_x : B_x \rightarrow A(x)$, $(m',a') \mapsto a'$ and $\phi : B \rightarrow B_x$, $(m,a) \mapsto (\xi_x(m),a(x))$. Then, by definition, we have the following commutative diagram
\[
\begin{tikzcd}
0 \arrow[r] & I \arrow[r,"\iota"] \arrow[d,"\pi_x"] & B \arrow[r,"p"] \arrow[d,"\phi"] & A \arrow[r] \arrow[d,"\pi_x"] & 0\\
0 \arrow[r] & I(x) \arrow[r,"\iota'_x"] & B_x \arrow[r,"p'_x"] & A(x) \arrow[r] & 0,
\end{tikzcd}
\]with exact rows. The Snake lemma then implies that the following sequence
\[
0 \rightarrow [\mathfrak{m}_xI] \rightarrow \ker(\phi) \rightarrow [\mathfrak{m}_xA] \rightarrow 0  
\]
is exact. By \eqref{eq:xi-maximal-ideal}, we observe that $[\mathfrak{m}_xB] \subseteq \ker(\phi)$. Moreover, we have the following commutative diagram
\[
\begin{tikzcd}
0 \arrow[r] & {}[\mathfrak{m}_xI] \arrow[r] \arrow[d,sloped,"="] & {}[\mathfrak{m}_xB] \arrow[r] \arrow[d,sloped,"\subseteq"] & {}[\mathfrak{m}_xA] \arrow[r] \arrow[d,sloped,"="] & 0\\
0 \arrow[r] & {}[\mathfrak{m}_xI] \arrow[r] & \ker(\phi) \arrow[r] & {}[\mathfrak{m}_xA] \arrow[r] & 0,
\end{tikzcd}
\]with exact rows. Again by the five lemma, we conclude that $\ker(\phi)=[\mathfrak{m}_xB]$, i.e., $\phi$ descends to an isomorphism from $B(x)$ to $B_x$. This concludes (2) and the proof.
\end{proof}



We now proceed towards a $G$-equivariant version of Proposition \ref{prop:c0x-busby} and we begin by fixing some notations. Let $G$ be a second countable groupoid and $(A,\alpha)$ be a separable, $G$-$C^*$-algebra. We will write 
\[
\bar \alpha : \mathcal{M}(s^*A) \rightarrow \mathcal{M}(r^*A), \quad \tilde \alpha : \mathcal{Q}(s^*A) \rightarrow \mathcal{Q}(r^*A),   
\]for the induced $*$-isomorphisms, so that the diagram
\begin{equation}
\begin{tikzcd}
\mathcal{M}(s^*A) \arrow[d,"q_{s^*A}"] \arrow[r,"\bar \alpha"] & \mathcal{M}(r^*A) \arrow[d,"q_{r^*A}"]\\  
\mathcal{Q}(s^*A) \arrow[r,"\tilde \alpha"] & \mathcal{Q}(r^*A)
\end{tikzcd}  
\end{equation}
commutes. Since $\alpha$ is $C_0(G)$-linear, so are $\bar \alpha$ and $\tilde \alpha$. Thus for each $g \in G$, the following diagrams
\begin{equation}
\begin{tikzcd}
\mathcal{M}(s^*A) \arrow[d,"\xi_g"] \arrow[r,"\bar \alpha"] & \mathcal{M}(r^*A) \arrow[d,"\xi_g"] \\
\mathcal{M}(A(s(g))) \arrow[r,"\bar \alpha_g"] & \mathcal{M}(A(r(g))),
\end{tikzcd}
\begin{tikzcd}
\mathcal{Q}(s^*A) \arrow[d,"\zeta_g"] \arrow[r,"\tilde \alpha"] & \mathcal{Q}(r^*A) \arrow[d,"\zeta_g"] \\
\mathcal{Q}(A(s(g))) \arrow[r,"\tilde \alpha_g"] & \mathcal{Q}(A(r(g))),
\end{tikzcd}  
\end{equation}and
\begin{equation}\label{eq:alphabar-alphatilde}
\begin{tikzcd}
\mathcal{M}(A(s(g))) \arrow[d,"q_{A(s(g))}"] \arrow[r,"\bar \alpha_g"] & \mathcal{M}(A(r(g))) \arrow[d,"q_{A(r(g))}"]\\  
\mathcal{Q}(A(s(g))) \arrow[r,"\tilde \alpha_g"] & \mathcal{Q}(A(r(g)))
\end{tikzcd}  
\end{equation}
commute. An extension 
\begin{equation}\label{eq:extension-2}\tag{$\eta$}
0 \rightarrow I \xrightarrow{\iota} B \xrightarrow{p} A \rightarrow 0  
\end{equation}
of $G$-$C^*$-algebras and $G$-equivariant, $*$-homomorphisms, will be called a $G$-$C^*$-extension. In particular, \eqref{eq:extension-2} is a $C_0(G^{(0)})$-extension and therefore $\bar \mu(\eta)$ is $C_0(G^{(0)})$-linear. As one might expect, $\bar \mu(\eta)$ is $G$-equivariant in a sense, which we make precise now.

\begin{definition}\label{def:mubar-equivariant}
Let $(B,\beta)$ and $(I,\gamma)$ be $G$-$C^*$-algebras. A $C_0(G^{(0)})$-linear $*$-homomorphism (or completely positive map) $\phi : B \rightarrow \mathcal{M}(I)$ is said to be $G$-equivariant if $\phi_{r(g)}\beta_g=\bar \gamma_g \phi_{s(g)}$, i.e., the diagram 
\begin{equation}
\begin{tikzcd}
B_{s(g)} \arrow[d,"\phi_{s(g)}"] \arrow[r,"\beta_g"] & B_{r(g)} \arrow[d,"\phi_{r(g)}"] \\
\mathcal{M}(I(s(g))) \arrow[r,"\bar \gamma_g"] & \mathcal{M}(I(r(g)))
\end{tikzcd}
\end{equation}commutes for all $g \in G$.
\end{definition}

\begin{lemma}\label{lem:mubar-equivariant}
The $*$-homomorphism 
\[
\bar \mu(\eta) : B \rightarrow \mathcal{M}(I)  
\]for the extension \eqref{eq:extension-2} is $G$-equivariant, whenever $I$ is separable.  
\end{lemma}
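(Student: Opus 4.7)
The plan is to reduce the $G$-equivariance of $\bar\mu(\eta)$, which is a condition on its fiberwise components, to a direct computation on each fiber, using the identification of those components already established in the immediately preceding lemma.

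First I would invoke that lemma to replace $\bar\mu(\eta)_{s(g)}$ and $\bar\mu(\eta)_{r(g)}$ by $\bar\mu(\eta(s(g)))$ and $\bar\mu(\eta(r(g)))$ respectively. Thus for a fixed $g \in G$ the task becomes to verify the equality
\[
\bar\mu(\eta(r(g))) \circ \beta_g \;=\; \bar\gamma_g \circ \bar\mu(\eta(s(g)))
\]
of $*$-homomorphisms $B(s(g)) \to \mathcal{M}(I(r(g)))$. The hypothesis that \eqref{eq:extension-2} is a $G$-$C^*$-extension provides the two key equivariance identities $\iota_{r(g)} \circ \gamma_g = \beta_g \circ \iota_{s(g)}$ and $p_{r(g)} \circ \beta_g = \alpha_g \circ p_{s(g)}$; of these, only the former enters the present calculation.

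Next I would carry out the evaluation. For $b \in B(s(g))$ and $e \in I(s(g))$, the definition of $\bar\mu(\eta(r(g)))$ gives
\[
\bar\mu(\eta(r(g)))(\beta_g(b))\,\gamma_g(e) \;=\; \beta_g(b)\,\iota_{r(g)}(\gamma_g(e)) \;=\; \beta_g(b)\,\beta_g(\iota_{s(g)}(e)) \;=\; \beta_g\bigl(b\,\iota_{s(g)}(e)\bigr),
\]
where the middle equality is the $G$-equivariance of $\iota$. Since $b\,\iota_{s(g)}(e)$ lies in the ideal $\iota_{s(g)}(I(s(g)))$, applying $\beta_g$ to it amounts, after identifying this ideal with $I(s(g))$, to applying $\gamma_g$, so the expression equals $\iota_{r(g)}\bigl(\gamma_g(\bar\mu(\eta(s(g)))(b)\,e)\bigr)$. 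On the other hand, because $\bar\gamma_g$ is the canonical multiplier extension of $\gamma_g$, one has $\bar\gamma_g(m)\,\gamma_g(e) = \gamma_g(m\,e)$ for every $m \in \mathcal{M}(I(s(g)))$, so the right-hand side evaluates on $\gamma_g(e)$ to $\gamma_g(\bar\mu(\eta(s(g)))(b)\,e)$ as well. As $\gamma_g$ is a bijection from $I(s(g))$ to $I(r(g))$ and $I(r(g))$ is an essential ideal of its multiplier algebra, both multipliers are determined by these values, so they coincide.

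I do not anticipate a real obstacle here: the argument is a diagram chase built on the preceding lemma together with the standard fact that a $*$-isomorphism of $C^*$-algebras lifts uniquely to a $*$-isomorphism of their multiplier algebras commuting with left multiplication. The only mild nuisance is keeping the two different meanings of $\bar\gamma_g$ straight—namely its role as the fiber of the global isomorphism $\bar\alpha$ on $\mathcal{M}(s^*I)$ versus its role as the multiplier extension of the fiber $\gamma_g$—but the compatibility diagram preceding Definition \ref{def:mubar-equivariant} reconciles these two descriptions, so the computation above is unambiguous.
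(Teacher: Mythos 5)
Your proposal is correct and follows essentially the same route as the paper: after identifying the fibers $\bar\mu(\eta)_{s(g)}$ and $\bar\mu(\eta)_{r(g)}$ with $\bar\mu(\eta(s(g)))$ and $\bar\mu(\eta(r(g)))$ via the preceding lemma, both arguments evaluate the two multipliers on a generic element of $I(r(g))$ (you parametrize it as $\gamma_g(e)$ where the paper inserts $\gamma_g\gamma_g^{-1}$, a purely cosmetic difference), use only the $G$-equivariance of $\iota$ together with the identity $\bar\gamma_g(m)\gamma_g(e)=\gamma_g(me)$, and conclude because multipliers are determined by their action on the essential ideal. Your explicit remarks on identifying $\iota_{s(g)}(I(s(g)))$ with $I(s(g))$ and on reconciling the two descriptions of $\bar\gamma_g$ only make precise what the paper leaves implicit.
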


\begin{proof}
We fix $b \in B$, $e \in I$. We then have for all $g \in G$,
\allowdisplaybreaks{
\[
\begin{aligned}
\bar \mu_{r(g)}(\beta_g(b(s(g))))(e(r(g)))=&{}\beta_g(b(s(g)))\iota_{r(g)}(e(r(g)))\\
=&{}\beta_g(b(s(g)))\iota_{r(g)}(\gamma_g(\gamma_g^{-1}(e(r(g)))))\\ 
=&{}\beta_g(b(s(g)))\beta_g(\iota_{s(g)}(\gamma_g^{-1}(e(r(g)))))\\ 
=&{}\gamma_g(b(s(g))\iota_{s(g)}(\gamma_g^{-1}(e(r(g)))))\\
=&{}\gamma_g(\bar \mu_{s(g)}(b(s(g)))(\gamma_g^{-1}(e(r(g)))))\\ 
=&{}\bar \gamma_g (\bar \mu_{s(g)}(b(s(g))))(e(r(g))),
\end{aligned}
\]
}where the first equality is by definition of $\bar \mu_{r(g)}$; the third equality is by $G$-equivariance of $\iota$; the fifth equality is again by definition of $\bar \mu_{s(g)}$ and the sixth equality is by definition of $\bar \gamma_g$. This completes the proof.
\end{proof}

Similarly, the Busby invariant $\mu : A \rightarrow \mathcal{Q}(I)$ is $C_0(G^{(0)})$-linear. Moreover, it is $G$-equivariant in the following sense.

\begin{definition}\label{def:busby-equivariant}
Let $(A,\alpha)$ and $(I,\gamma)$ be $G$-$C^*$-algebras. A $C_0(G^{(0)})$-linear $*$-homomorphism (or completely positive map) $\psi : A \rightarrow \mathcal{Q}(I)$ is said to be $G$-equivariant if $\psi_{r(g)}\alpha_g=\tilde \gamma_g \psi_{s(g)}$, i.e., the diagram 
\begin{equation}
\begin{tikzcd}
A_{s(g)} \arrow[d,"\psi_{s(g)}"] \arrow[r,"\alpha_g"] & A_{r(g)} \arrow[d,"\psi_{r(g)}"] \\
\mathcal{Q}(I(s(g))) \arrow[r,"\tilde \gamma_g"] & \mathcal{Q}(I(r(g)))
\end{tikzcd}
\end{equation}commutes for all $g \in G$.
\end{definition}

\begin{lemma}\label{lem:busby-equivariant}
The Busby invariant 
\[
\mu : A \rightarrow \mathcal{Q}(I)  
\]for the extension \eqref{eq:extension-2} is $G$-equivariant, whenever $I$ is separable.  
\end{lemma}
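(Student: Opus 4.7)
The plan is to reduce the statement for $\mu$ to the already-established $G$-equivariance of $\bar\mu(\eta)$ via the quotient maps $q_{I(s(g))}$ and $q_{I(r(g))}$, exploiting the commutative square \eqref{eq:alphabar-alphatilde} that relates $\bar\gamma_g$ and $\tilde\gamma_g$. The key ingredients have all been assembled: the fiberwise identification $\mu(\eta)_x = \mu(\eta(x))$ given by the previous lemma, the fiberwise identification $\bar\mu(\eta)_x = \bar\mu(\eta(x))$, the $G$-equivariance of $\bar\mu$ from Lemma \ref{lem:mubar-equivariant}, the $G$-equivariance of $p$ itself (which is part of the data of \eqref{eq:extension-2} being a $G$-$C^*$-extension), and the definition $\mu(\eta(x))(p_x(b(x))) = q_{I(x)}(\bar\mu(\eta(x))(b(x)))$ of the Busby invariant at the fiber.

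First I would fix $g \in G$ and an arbitrary element in $A(s(g))$. Since $p_{s(g)} : B(s(g)) \to A(s(g))$ is surjective (as $p$ is a $C_0(G^{(0)})$-linear surjection and the fiber functor preserves surjectivity), I would write this element as $p_{s(g)}(b(s(g)))$ for some $b \in B$. This choice of preimage reduces the computation to one involving $\bar\mu$, where the equivariance is known.

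Then the proof is a diagram chase, mirroring the structure of the proof of Lemma \ref{lem:mubar-equivariant}:
\[
\begin{aligned}
\mu_{r(g)}\bigl(\alpha_g(p_{s(g)}(b(s(g))))\bigr)
&= \mu_{r(g)}\bigl(p_{r(g)}(\beta_g(b(s(g))))\bigr) \\
&= q_{I(r(g))}\bigl(\bar\mu_{r(g)}(\beta_g(b(s(g))))\bigr) \\
&= q_{I(r(g))}\bigl(\bar\gamma_g(\bar\mu_{s(g)}(b(s(g))))\bigr) \\
&= \tilde\gamma_g\bigl(q_{I(s(g))}(\bar\mu_{s(g)}(b(s(g))))\bigr) \\
&= \tilde\gamma_g\bigl(\mu_{s(g)}(p_{s(g)}(b(s(g))))\bigr),
\end{aligned}
\]
where the first equality uses $G$-equivariance of $p$, the second and fifth use the fiber definition of the Busby invariant, the third uses Lemma \ref{lem:mubar-equivariant}, and the fourth uses the commutativity of diagram \eqref{eq:alphabar-alphatilde}. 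Since this holds for every representative $b(s(g))$ of every element of $A(s(g))$, the desired equality $\mu_{r(g)}\alpha_g = \tilde\gamma_g\mu_{s(g)}$ follows.

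I expect no genuine obstacle: the heavy lifting has already been done in Lemma \ref{lem:mubar-equivariant} and in establishing the compatibility of $\bar\gamma_g$ with $\tilde\gamma_g$ via the quotient maps. The only mildly nontrivial point is the legitimacy of lifting $a(s(g))$ to some $b(s(g)) \in B(s(g))$, which is guaranteed by surjectivity of $p_{s(g)}$; one should briefly remark that the identity to be verified is independent of the chosen lift, which is automatic because both sides depend only on $p_{s(g)}(b(s(g))) = a(s(g))$.
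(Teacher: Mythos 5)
Your proof is correct and is essentially identical to the paper's: the same five-step chain of equalities, justified in the same order by $G$-equivariance of $p$, the definition of the Busby invariant, Lemma \ref{lem:mubar-equivariant}, and the square \eqref{eq:alphabar-alphatilde}, with surjectivity of $p$ and $\pi_{s(g)}$ closing the argument. Your added remark on independence of the chosen lift is harmless but automatic, since both sides are already well-defined maps on the fibers.
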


\begin{proof}
We fix $b \in B$. We then have for all $g \in G$,
\allowdisplaybreaks{
\[
\begin{aligned}
\mu_{r(g)}(\alpha_g(p_{s(g)}(b_{s(g)})))=&{}\mu_{r(g)}(p_{r(g)}(\beta_g(b_{s(g)})))\\
=&{}q_{I(r(g))}(\bar \mu_{r(g)}(\beta_g(b(s(g)))))\\ 
=&{}q_{I(r(g))}(\bar \gamma_g(\bar \mu_{s(g)}(b(s(g)))))\\ 
=&{}\tilde \gamma_g(q_{I(s(g))}(\bar \mu_{s(g)}(b(s(g)))))\\ 
=&{}\tilde \gamma_g(\mu_{s(g)}(p_{s(g)}(b(s(g))))),
\end{aligned}  
\]  
}where the first equality is by equivariance of $p$; the second equality is by definition of $\mu_{r(g)}$; the third equality is by Lemma \ref{lem:mubar-equivariant}; the fourth equality is by the commutativity of the square \eqref{eq:alphabar-alphatilde} and the fifth equality is again by definition of $\mu_{s(g)}$. This completes the proof. 
\end{proof}

One might expect that the converse to the lemma above holds too, i.e., for a $G$-equivariant, $*$-homomorphism $\mu : A \rightarrow \mathcal{Q}(I)$, the pullback $B$ in Proposition \ref{prop:c0x-busby} is a $G$-$C^*$-algebra and the extension \eqref{eq:extension-1} is a $G$-$C^*$-extension such that $\mu(\eta)=\mu$. Indeed, when $G$ is a group, this is \cite{thomsen:equivariant-kk}*{Theorem 2.2}. However, we note that \cite{thomsen:equivariant-kk}*{Theorem 2.2} is a corollary of the automatic continuity result \cite{thomsen:equivariant-kk}*{Theorem 2.1} which was more generally proved in \cite{brown:continuity}*{Theorem 2}. Lacking such a result, we follow \cite{kasparov:operator-k}*{Section 7} and make the following definition.

\begin{definition}\label{def:g-admissible}
Let $G$ be a second countable groupoid, $(I,\gamma)$ and $(A,\alpha)$ be two separable, $G$-$C^*$-algebras and $\mu : A \rightarrow \mathcal{Q}(I)$ be a $G$-equivariant, $*$-homomorphism. We say $\mu$ is $G$-admissible if given $a \in A$ and $m \in \mathcal{M}(I)$ with $q_{I}(m)=\mu(a)$, the section $g \mapsto \gamma_g\xi_{s(g)}(m)\gamma_g^{-1}$ is continuous.
\end{definition}

Some remarks are in order. Firstly, we topologize $\bigsqcup_{g \in G} \mathcal{M}(I(r(g)))$ as in \cite{akemann-pedersen-tomiyama:multipliers}*{Section 3} and note that although the reference deals with continuous fields, the topology nevertheless makes sense with upper semi-continuity instead of continuity; see also \cite{williams:book-crossed-products}*{Lemma C.11}. Secondly, with this topology, the section $g \mapsto \gamma_g\xi_{s(g)}(m)\gamma_g^{-1}$ in the above definition is required to be norm-continuous instead of strictly-continuous. Finally, that the class of $G$-admissible, $*$-homomorphisms is not empty follows from the following corollary to Lemma \ref{lem:mubar-equivariant}.

\begin{corollary}\label{cor:busby-g-admissible}
The Busby invariant  
\[
\mu(\eta) : A \rightarrow \mathcal{Q}(I)
\]
for the extension \eqref{eq:extension-2} is $G$-admissible, whenever $B$ is separable. 
\end{corollary}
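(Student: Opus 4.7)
My plan is to reduce the problem, via surjectivity of $p$, to a sum of two sections each of which is manifestly norm-continuous. Given $a \in A$ and $m \in \mathcal{M}(I)$ with $q_I(m)=\mu(\eta)(a)$, I would first pick $b \in B$ with $p(b)=a$. A short calculation with the definition of $\mu(\eta)$ gives $q_I(m-\bar\mu(\eta)(b))=\mu(\eta)(a)-\mu(\eta)(p(b))=0$, so $m_0:=m-\bar\mu(\eta)(b) \in I$. This uses separability of $B$ only implicitly (through the well-definedness of $\bar\mu(\eta) : B \to \mathcal{M}(I)$ and of the Busby invariant).

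Next, I would rewrite the section in question. Applying \eqref{eq:mu-bar-fiber-square} to $\bar\mu(\eta)$ and using the $G$-equivariance of $\bar\mu(\eta)$ from Lemma \ref{lem:mubar-equivariant}, together with the fact that $\xi_{s(g)}(m_0)=m_0(s(g))$ since $m_0 \in I$, one obtains
\[
\gamma_g \xi_{s(g)}(m)\gamma_g^{-1} = \bar\mu(\eta(r(g)))\bigl(\beta_g(b(s(g)))\bigr) + \gamma_g(m_0(s(g))).
\]
The second summand is immediately under control: by Lemma \ref{lem:continuity} applied to $(I,\gamma)$, the map $g \mapsto \gamma_g(m_0(s(g)))$ is a continuous section of $r^*\mathcal{I}$, which embeds isometrically into the bundle of multipliers fiberwise and so is norm-continuous there.

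The first summand is where the real work lies, and my key move is to pass to the pullback extension
\[
r^*\eta \colon 0 \to r^*I \to r^*B \xrightarrow{r^*p} r^*A \to 0,
\]
which is exact since pullback along $r : G \to G^{(0)}$ is exact (as recalled in Section \ref{sec:groupoids-actions}) and whose ideal $r^*I$ is separable (as $B$, hence $I$, is separable, and $C_0(G)$ is separable by second countability of $G$). By Lemma \ref{lem:continuity} applied to $(B,\beta)$, the section $g \mapsto \beta_g(b(s(g)))$ defines a bona fide element $c \in r^*B$. Then $\bar\mu(r^*\eta)(c) \in \mathcal{M}(r^*I)$, and applying the fiber-identification lemma preceding \eqref{eq:mu-bar-fiber-square} (but now to the $C_0(G)$-algebra $r^*I$ and the extension $r^*\eta$), the canonical section of this multiplier over $G$ is precisely $g \mapsto \bar\mu(\eta(r(g)))(\beta_g(b(s(g))))$.

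Since canonical sections of multipliers are norm-continuous sections of the bundle $\bigsqcup_{g \in G} \mathcal{M}(I(r(g)))$ (this is essentially the defining property of the \cite{akemann-pedersen-tomiyama:multipliers} topology, adapted to the upper semicontinuous setting as remarked after Definition \ref{def:g-admissible}), the first summand is norm-continuous as well. Summing, the full section is norm-continuous, so $\mu(\eta)$ is $G$-admissible. The main technical point is recognizing the apparently ad hoc expression $\bar\mu(\eta(r(g)))(\beta_g(b(s(g))))$ as the canonical section of a concrete element of $\mathcal{M}(r^*I)$, which turns what would otherwise be a delicate comparison of norms across different fibers into a tautology.
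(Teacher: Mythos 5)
Your argument is, at its core, the paper's own proof: the paper also reduces everything to the observation that, by Lemma \ref{lem:mubar-equivariant}, the section $g \mapsto \gamma_g\xi_{s(g)}(m)\gamma_g^{-1}$ is the image under $\bar\mu(\eta)$ of the continuous section $g \mapsto \beta_g(b(s(g)))$. The paper takes a shortcut you missed: since $b \mapsto (\bar\mu(\eta)(b), p(b))$ identifies $B$ with the pullback $\{(m,a) : q_I(m)=\mu(\eta)(a)\}$ (cf.\ Proposition \ref{prop:c0x-busby}), one can choose $b \in B$ with $\bar\mu(\eta)(b)=m$ \emph{exactly}, so your correction term $m_0 \in I$ never arises and no splitting into two summands is needed. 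On the other hand, the paper's final sentence (``the latter section is continuous and $\bar\mu(\eta)$ is a $*$-homomorphism'') compresses precisely the point you spell out, namely that the image section is a canonical section for the pullback extension $r^*\eta$ and hence continuous in the Akemann--Pedersen--Tomiyama topology; your identification of $\bar\mu(r^*\eta)_g$ with $\bar\mu(\eta(r(g)))$ via the fiber lemma and \eqref{eq:mu-bar-fiber-square} is a correct and welcome elaboration of that step.

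One slip needs repair: $g \mapsto \beta_g(b(s(g)))$ is only a \emph{bounded} continuous section (the third condition of Lemma \ref{lem:continuity}), i.e.\ it is $\beta(s^*b)$ with $s^*b$ a multiplier of $s^*B$; it lies in $r^*B=\Gamma_0(G;r^*\mathcal{B})$ only if it vanishes at infinity, which fails for noncompact $G$ (already for a group, $b \otimes 1 \notin B \otimes C_0(G)$). So your $c$ is an element of $\mathcal{M}(r^*B)$, not of $r^*B$, and $\bar\mu(r^*\eta)(c)$ is not literally defined. The fix is routine: continuity of a section is local, so near any $g_0$ replace $c$ by $fc \in r^*B$ for a cutoff $f \in C_c(G)$ with $f \equiv 1$ near $g_0$; alternatively, extend $\bar\mu(r^*\eta)$ strictly to $\mathcal{M}(r^*B)$, which is legitimate since $[\bar\mu(r^*\eta)(r^*B)\,r^*I]=r^*I$. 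The same remark applies to your second summand $g \mapsto \gamma_g(m_0(s(g)))$, which is likewise only a bounded continuous section of $r^*\mathcal{I}$. With that patch your proof is correct, and apart from the avoidable decomposition it coincides with the paper's.
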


\begin{proof}
We fix $a \in A$ and $m \in \mathcal{M}(I)$ with $q_I(m)=\mu(a)$. We choose $b \in B$ such that $m=\bar \mu(\eta)(b)$. By Lemma \ref{lem:mubar-equivariant}, we conclude that the section $g \mapsto \gamma_g\xi_{s(g)}(m)\gamma_g^{-1}$ is the image of the section $g \mapsto \beta_g(b(s(g))$ under $\bar \mu(\eta)$. The result now follows since the latter section is continuous and $\bar \mu(\eta)$ is a $*$-homomorphism.
\end{proof}


The following corollary to Proposition \ref{prop:c0x-busby} sets up the bijection between $G$-$C^*$-extensions and $G$-admissible, $*$-homomorphisms as defined above.

\begin{corollary}\label{cor:equivariant-busby}
Let $G$ be a second countable groupoid, $(I,\gamma)$ and $(A,\alpha)$ be two separable, $G$-$C^*$-algebras and let $\mu : A \to \mathcal{Q}(I)$ be a $G$-admissible, $*$-homomorphism. Then the following statements hold.
\begin{enumerate}
\item The $C_0(G^{(0)})$-algebra $B$ from Proposition \ref{prop:c0x-busby} can be uniquely made into a $G$-$C^*$-algebra such that $\iota$ and $p$ are $G$-equivariant.
\item The extension 
\begin{equation}\tag{$\eta$}
0 \rightarrow I \xrightarrow{\iota} B \xrightarrow{p} A \rightarrow 0
\end{equation}is a $G$-$C^*$-extension such that $\mu(\eta)=\mu$.
\end{enumerate}
\end{corollary}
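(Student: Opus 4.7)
The plan is to define the $G$-action $\beta$ fiberwise, verify the algebraic properties, and then invoke Lemma \ref{lem:continuity}(iii) to promote this data to a genuine $G$-$C^*$-algebra structure. Using the identification $B(x) \cong B_x$ from Proposition \ref{prop:c0x-busby}(2), for each $g \in G$ I set
\[
\beta_g : B_{s(g)} \to B_{r(g)}, \qquad \beta_g(m'', a'') := (\bar\gamma_g(m''), \alpha_g(a'')).
\]
That $\beta_g$ lands in $B_{r(g)}$ follows from the $G$-equivariance of $\mu$ (Definition \ref{def:busby-equivariant}) together with the commutativity of \eqref{eq:alphabar-alphatilde}, yielding $q_{I(r(g))}(\bar\gamma_g(m'')) = \tilde\gamma_g(\mu_{s(g)}(a'')) = \mu_{r(g)}(\alpha_g(a''))$. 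The cocycle identity $\beta_{gh} = \beta_g \beta_h$ is inherited from $\alpha$ and $\bar\gamma$, and each $\beta_g$ is a $*$-isomorphism with inverse $\beta_{g^{-1}}$.

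To check condition (iii) of Lemma \ref{lem:continuity}, fix $b = (m, a) \in B$ and $g_0 \in G$. The section in question decomposes as
\[
g \mapsto \beta_g(b(s(g))) = (\bar\gamma_g(\xi_{s(g)}(m)),\, \alpha_g(a(s(g)))),
\]
whose second coordinate is a continuous section of $r^*\mathcal{A}$ because $(A,\alpha)$ is a $G$-$C^*$-algebra, and whose first coordinate is norm-continuous by the $G$-admissibility of $\mu$ applied to $(a, m)$. To assemble these into continuity in $r^*\mathcal{B}$, given $\epsilon > 0$ I pick $a' \in A$ with $a'(r(g_0)) = \alpha_{g_0}(a(s(g_0)))$ and a neighborhood $V_1$ of $g_0$ on which $\|a'(r(g)) - \alpha_g(a(s(g)))\| < \epsilon$, then choose any $m_0 \in \mathcal{M}(I)$ lifting $\mu(a') \in \mathcal{Q}(I)$, and correct it by an element $e \in I$ whose fiber at $r(g_0)$ equals $\bar\gamma_{g_0}(\xi_{s(g_0)}(m)) - \xi_{r(g_0)}(m_0) \in I(r(g_0))$. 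Setting $m' := m_0 + e$ and $b' := (m', a') \in B$, the equality $b'(r(g_0)) = \beta_{g_0}(b(s(g_0)))$ holds, and the local $\epsilon$-approximation $\|b'(r(g)) - \beta_g(b(s(g)))\| < \epsilon$ reduces by the triangle inequality to norm-continuity of $g \mapsto \xi_{r(g)}(m')$ at $g_0$.

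This last continuity is precisely what $G$-admissibility buys: applying $G$-admissibility to $(a', m')$, which lies in $B$ by construction, and restricting the resulting norm-continuous section $g \mapsto \bar\gamma_g(\xi_{s(g)}(m'))$ along the unit inclusion $u : G^{(0)} \into G$ (where $\bar\gamma_{u_x} = \mathrm{id}$), shows that $x \mapsto \xi_x(m')$ is norm-continuous on $G^{(0)}$; composing with the continuous map $r$ gives norm-continuity of $g \mapsto \xi_{r(g)}(m')$. Combined with the norm-continuity of $g \mapsto \bar\gamma_g(\xi_{s(g)}(m))$ at $g_0$ and the equality $\xi_{r(g_0)}(m') = \bar\gamma_{g_0}(\xi_{s(g_0)}(m))$ secured by the correction $e$, this yields the required bound on a sufficiently small neighborhood of $g_0$, completing the verification of Lemma \ref{lem:continuity}(iii).

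With $\beta$ in hand, $G$-equivariance of $\iota$ and $p$ is immediate from the fiberwise definition. Uniqueness of $\beta$ follows because if $\beta'$ is any second such action, then $G$-equivariance of $p$ forces the second coordinate of $\beta'_g(m'', a'')$ to be $\alpha_g(a'')$, while the relation $\beta'_g(m'', a'') \iota_{r(g)}(e) = \iota_{r(g)}(\bar\gamma_g(m'')e)$, obtained from $G$-equivariance of $\iota$, pins down the first coordinate to be $\bar\gamma_g(m'')$. Finally, $\mu(\eta) = \mu$ is inherited from Proposition \ref{prop:c0x-busby}(3), since the Busby invariant depends only on the underlying $C_0(G^{(0)})$-extension. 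The principal obstacle is the continuity assembly in the middle two paragraphs, and this is exactly where the $G$-admissibility hypothesis compensates for the lack of an automatic continuity theorem for groupoid actions.
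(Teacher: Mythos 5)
Your proof is correct and takes essentially the same route as the paper: the paper likewise defines $\beta_g$ fiberwise as $(m'',a'')\mapsto(\gamma_g\,m''\,\gamma_g^{-1},\alpha_g(a''))$ on the pullback, notes via $G$-equivariance of $\mu$ and \eqref{eq:alphabar-alphatilde} that this lands in $B_{r(g)}$, and then invokes Lemma \ref{lem:continuity} with $G$-admissibility supplying the continuity of the sections $g\mapsto\beta_g(b(s(g)))$. Your middle paragraphs (the correction by $e\in I$ to build a global section through $\beta_{g_0}(b(s(g_0)))$, and the restriction-to-units observation giving norm-continuity of $g\mapsto\xi_{r(g)}(m')$) simply flesh out the step the paper compresses into ``the result now follows since $\mu$ is assumed to be $G$-admissible,'' while your fiberwise uniqueness argument, pinning down the multiplier coordinate by letting it act on $I(r(g))$, is exactly the ``careful adaptation of the trick in the proof of Proposition \ref{prop:c0x-busby}'' that the paper alludes to.
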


\begin{proof}
We note that (2) follows from (1) and Proposition \ref{prop:c0x-busby}. To prove (1), we will use Lemma \ref{lem:continuity}. To that end, we note that for any $g \in G$ and $(m,a) \in B$, $(\gamma_g\xi_{s(g)}(m)\gamma_g^{-1},\alpha_g(a(s(g)))) \in B_{r(g)}$, by Proposition \ref{prop:c0x-busby} and Lemma \ref{lem:busby-equivariant}. The result now follows since $\mu$ is assumed to be $G$-admissible. Uniqueness follows from a careful adaptation of the trick in the proof of Proposition \ref{prop:c0x-busby}.
\end{proof}

\begin{corollary}
Let $G$ be a locally compact, Hausdorff and second countable group, $(A, \alpha)$ and $(I, \gamma)$ be two separable, $G$-$C^*$-algebras, and let $\mu \colon A \to \mathcal{Q}(I)$ be a $*$-homomorphism. Then $\mu$ is $G$-admissible if and only if $\mu$ is $G$-equivariant.
\end{corollary}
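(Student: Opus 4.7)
The plan is to handle the two directions separately, with the bulk of the work concentrated in one of them. The forward implication is tautological: $G$-equivariance is built into the very definition of $G$-admissibility (Definition \ref{def:g-admissible}), so any $G$-admissible $*$-homomorphism is automatically $G$-equivariant.

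For the reverse implication, suppose $\mu$ is $G$-equivariant. Since $G$ is a group, $G^{(0)}$ collapses to a point, all fibers reduce to the ambient algebras, and the maps $\xi_{s(g)}$ become identities; the section appearing in Definition \ref{def:g-admissible} simplifies to $g \mapsto \bar\gamma_g(m)$, where $\bar\gamma_g \colon \mathcal{M}(I) \to \mathcal{M}(I)$ is the strict extension of $\gamma_g$. What needs to be shown is that for every $a \in A$ and every $m \in \mathcal{M}(I)$ with $q_I(m) = \mu(a)$, this map is norm-continuous. Equivariance yields $q_I(\bar\gamma_g(m)) = \tilde\gamma_g(\mu(a)) = \mu(\alpha_g(a))$, and the right-hand side is norm-continuous in $g$ because $\alpha$ is strongly continuous on the separable algebra $A$ and $\mu$ is bounded and linear. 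I then invoke the automatic continuity theorem of Thomsen \cite{thomsen:equivariant-kk}*{Theorem 2.1} (equivalently \cite{brown:continuity}*{Theorem 2}), which asserts that the canonical projection $q_I$ sends the $C^*$-subalgebra of norm-continuous elements of $\mathcal{M}(I)$ onto the $C^*$-subalgebra of norm-continuous elements of $\mathcal{Q}(I)$. Hence $\mu(a)$ admits a lift $m' \in \mathcal{M}(I)$ with $g \mapsto \bar\gamma_g(m')$ norm-continuous; the difference $m - m' \in \ker q_I = I$, and by strong continuity of $\gamma$ on $I$ the map $g \mapsto \gamma_g(m-m')$ is norm-continuous. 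Writing $\bar\gamma_g(m) = \bar\gamma_g(m') + \gamma_g(m-m')$ closes the argument.

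The sole real obstacle is the invocation of Brown--Thomsen's automatic continuity, which is precisely the tool unavailable in the groupoid setting and the reason the authors introduced $G$-admissibility as an independent hypothesis in Definition \ref{def:g-admissible}; once it is granted, the rest of the argument is entirely formal.
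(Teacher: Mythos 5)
Your proof is correct, and it leans on exactly the same external input as the paper --- the Thomsen--Brown automatic continuity theorem --- but packages it differently. The paper disposes of both directions by citing its own Busby machinery: for admissible~$\Rightarrow$~equivariant it invokes Corollary \ref{cor:equivariant-busby} together with Lemma \ref{lem:busby-equivariant} (reconstructing the $G$-$C^*$-extension from $\mu$ and reading off equivariance of its Busby invariant), and for the converse it uses \cite{thomsen:equivariant-kk}*{Theorem 2.1} only to make the pullback $B$ into a $G$-$C^*$-algebra, after which Corollary \ref{cor:busby-g-admissible} yields admissibility. You instead observe that the forward direction is definitional --- Definition \ref{def:g-admissible} literally presupposes equivariance, which is a fair shortcut the paper itself implicitly concedes in the remark following the corollary --- and for the converse you unpack the automatic continuity theorem directly at the multiplier level: equivariance plus commutativity of the square \eqref{eq:alphabar-alphatilde} shows $\mu(a)$ is a $G$-continuous element of $\mathcal{Q}(I)$, Thomsen supplies a $G$-continuous lift $m'$, and the decomposition $\bar\gamma_g(m)=\bar\gamma_g(m')+\gamma_g(m-m')$ with $m-m'\in I$ handles an \emph{arbitrary} lift $m$, which is precisely what Definition \ref{def:g-admissible} demands. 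Your route is self-contained and in effect reproves the relevant fragment of Thomsen's Theorem 2.2 by hand, which makes transparent that the only genuine obstacle in the groupoid setting is the missing automatic continuity theorem; the paper's route is shorter on the page but hides this inside the extension correspondence. Your reductions for the group case ($G^{(0)}$ a point, $\xi_{s(g)}=\mathrm{id}$, continuity of the section meaning norm-continuity of $g\mapsto\bar\gamma_g(m)$) all check out against the paper's conventions, so there is no gap.
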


\begin{proof}
First, we assume that $\mu$ is $G$-admissible. Then by Corollary \ref{cor:equivariant-busby} and Lemma \ref{lem:busby-equivariant}, $\mu$ is $G$-equivariant. The converse follows from \cite{thomsen:equivariant-kk}*{Theorem 2.1} and Corollary \ref{cor:busby-g-admissible}.
\end{proof}

We remark that by Corollary \ref{cor:equivariant-busby} and Lemma \ref{lem:busby-equivariant}, it also follows that for a groupoid $G$, $G$-admissibility implies $G$-equivariance. Therefore it is natural to conjecture the following.

\begin{conjecture}
Let $G$ be a second countable groupoid, $(A, \alpha)$ and $(I, \gamma)$ be two separable, $G$-$C^*$-algebras, and let $\mu \colon A \to \mathcal{Q}(I)$ be a $*$-homomorphism. If $\mu$ is $G$-equivariant then $\mu$ is $G$-admissible.
\end{conjecture}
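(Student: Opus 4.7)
The plan is to extend the automatic continuity arguments of \cite{thomsen:equivariant-kk} and \cite{brown:continuity} from groups to groupoids. Fix $a \in A$ and $m \in \mathcal{M}(I)$ with $q_I(m) = \mu(a)$, and set $\sigma_m(g) := \bar\gamma_g(\xi_{s(g)}(m))$; the task is to show that $\sigma_m$ is a norm-continuous section of $\bigsqcup_{g \in G} \mathcal{M}(I(r(g)))$. First, continuity does not depend on the choice of lift: if $m'$ is another lift of $\mu(a)$, then $m - m' \in I$, and $g \mapsto \bar\gamma_g(\xi_{s(g)}(m - m')) = \gamma_g((m-m')(s(g)))$ is continuous by Lemma \ref{lem:continuity}. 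Hence it suffices to produce, for each $a \in A$, one distinguished lift of $\mu(a)$ whose associated section is norm-continuous.

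The natural device is the quasi-invariant, quasi-central approximate unit $(u_\lambda)$ for $I$ built in Section \ref{sec:quasi-central}. For each $\lambda$, the element $u_\lambda^{1/2}\,m\,u_\lambda^{1/2}$ lies in $I$, so the section $\sigma^\lambda(g) := \gamma_g\bigl((u_\lambda^{1/2}\,m\,u_\lambda^{1/2})(s(g))\bigr)$ is automatically continuous. Using that $\bar\gamma_g$ is an isometry, one computes
\[
\bigl\lVert \sigma_m(g) - \sigma^\lambda(g) \bigr\rVert = \bigl\lVert \xi_{s(g)}(m) - u_\lambda(s(g))^{1/2}\,\xi_{s(g)}(m)\,u_\lambda(s(g))^{1/2} \bigr\rVert,
\]
and the goal becomes to control this uniformly for $g$ in a compact subset of $G$ via Arveson's quasi-central estimate. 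One would split the right-hand side into a commutator contribution $\lVert [u_\lambda(s(g))^{1/2}, \xi_{s(g)}(m)] \rVert$ and a residual term controlled by the fact that $q_{I(s(g))}(\xi_{s(g)}(m)) = \mu_{s(g)}(a(s(g)))$ lies in $\mu_{s(g)}(A(s(g)))$; quasi-invariance of $(u_\lambda)$ is what should keep these bounds uniform as $s(g)$ varies.

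The essential new point relative to the group case is a parametrized version of Arveson's estimate, requiring a single approximate unit $(u_\lambda)$ for $I$ whose fibrewise images $(u_\lambda(x))_{x \in G^{(0)}}$ are uniformly quasi-central against a coherent, bundle-compatible family of lifts of $\mu_x(A(x)) \subset \mathcal{Q}(I(x))$. I would try to produce such a family by selecting Borel-measurable lifts of $\mu$ along the bundle structure of $s^*I$, mirroring the way Brown's theorem extracts a norm-continuous lift in a single algebra via a Baire category argument.

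The main obstacle, and the reason this remains a conjecture, is precisely this parametrized measurable selection: one needs a Borel cross-section of $q_I \colon \mathcal{M}(I) \to \mathcal{Q}(I)$ that behaves well along $G$-orbits in $G^{(0)}$, so that a single choice of $(u_\lambda)$ suffices uniformly across fibers. In the group setting the base is a point and no such selection is needed, but for a genuine groupoid the orbit structure of $G$ on $G^{(0)}$ enters non-trivially. I expect the necessary ingredient to be a measurable selection theorem for analytic groupoids in the spirit of Ramsay, strong enough to interface with the bundle structure on $\mathcal{M}(I)$, which is not currently available in the form required here.
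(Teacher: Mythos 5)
The statement you are trying to prove is stated in the paper as a \emph{conjecture}: the authors give no proof, and they explicitly identify the missing ingredient as a groupoid analogue of the automatic continuity theorems of \cite{thomsen:equivariant-kk}*{Theorem 2.1} and \cite{brown:continuity}*{Theorem 2} (in the group case, $G$-equivariance implies $G$-admissibility precisely via those results, as in the corollary preceding the conjecture). So there is no proof in the paper to compare against, and your own text concedes the decisive step --- the parametrized measurable selection compatible with the bundle structure --- is unavailable. As submitted, this is a strategy outline with an acknowledged gap, not a proof; your closing diagnosis (that one needs a Baire-category/selection-type automatic continuity result in the groupoid setting, in the spirit of Ramsay) is consistent with the authors' own commentary on why they leave the statement open. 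Your preliminary reduction is sound: if $m,m'$ are two lifts of $\mu(a)$ then $m-m'\in I$, and the section $g \mapsto \gamma_g((m-m')(s(g)))$ is continuous by Lemma \ref{lem:continuity}, so continuity of $g\mapsto\gamma_g\xi_{s(g)}(m)\gamma_g^{-1}$ is indeed independent of the lift.

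However, the averaging step at the heart of your sketch fails for a concrete quantitative reason. Since $u_\lambda^{1/2}\,m\,u_\lambda^{1/2}\in I$, one has
\[
\bigl\lVert \xi_{s(g)}(m) - u_\lambda(s(g))^{1/2}\,\xi_{s(g)}(m)\,u_\lambda(s(g))^{1/2} \bigr\rVert \;\geq\; \mathrm{dist}\bigl(\xi_{s(g)}(m),\, I(s(g))\bigr) \;=\; \bigl\lVert \mu_{s(g)}(a(s(g))) \bigr\rVert,
\]
so the continuous sections $\sigma^\lambda$ stay at distance at least $\lVert\mu_{s(g)}(a(s(g)))\rVert$ from $\sigma_m$, uniformly in $\lambda$; they do not converge to $\sigma_m$ even pointwise unless $\mu(a)=0$, and no continuity can be extracted from a non-convergent approximation. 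Quasi-centrality of $(u_\lambda)$ only controls the commutator half of the splitting $m - u_\lambda^{1/2} m u_\lambda^{1/2} = (1-u_\lambda)m + u_\lambda^{1/2}[u_\lambda^{1/2},m]$; the residual piece $(1-u_\lambda)m$ has norm roughly $\lVert q_I(m)\rVert$ by design (this is exactly condition (6) of Lemma \ref{lemma:averaging}, which is an \emph{equality up to $1/n^2$}, not a smallness statement), and $(1-u_\lambda)m$ is itself a multiplier, not an element of $I$, so the continuity of its associated section is precisely as unknown as that of $\sigma_m$ --- the reduction is circular. The quasi-central approximate units of Section \ref{sec:quasi-central} are used in the paper to \emph{average a given lift} so as to improve its equivariance modulus (Lemma \ref{lem:lift-calkin}); they cannot manufacture norm-continuity of the section $g\mapsto\gamma_g\xi_{s(g)}(m)\gamma_g^{-1}$ for a multiplier $m$, which is a genuinely different (automatic continuity) problem. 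Any successful attack would have to go through something like your proposed parametrized Brown-type theorem directly, without the detour through $\sigma^\lambda$.
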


We end this section with a definition and an easy lemma.

\begin{definition}
Let $G$ be a groupoid, $(A,\alpha)$ be a $G$-$C^*$-algebra and $I$ be an ideal in $A$. We say that $I$ is a $G$-invariant ideal if $\alpha(s^*I)=r^*I$.
\end{definition}
  
\begin{lemma}
Let $(A,\alpha)$ be a $G$-$C^*$-algebra and $I$ be a $G$-invariant ideal in $A$. Then the quotient $A/I$ can be made into a $G$-$C^*$-algebra such that the extension 
\[
0 \rightarrow I \rightarrow A \rightarrow A/I \rightarrow 0  
\]is a $G$-$C^*$-extension.
\end{lemma}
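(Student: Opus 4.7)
My plan is to descend the action $\alpha$ from $A$ to $A/I$ by exploiting exactness of the pullback functor together with the very definition of a $G$-invariant ideal.

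First, I would observe that $I$ is automatically a $C_0(G^{(0)})$-ideal: since $\theta_A(C_0(G^{(0)})) \subseteq \mathcal{Z}\mathcal{M}(A)$, we have $\theta_A(f)I \subseteq I$ for every $f \in C_0(G^{(0)})$, so $\theta_A(f)$ descends to a central multiplier $\theta_{A/I}(f)$ on the quotient. This makes $A/I$ into a $C_0(G^{(0)})$-algebra for which the quotient map $p \colon A \to A/I$ is $C_0(G^{(0)})$-linear, with nondegeneracy and centrality of $\theta_{A/I}$ inherited from $\theta_A$.

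Next, I would invoke exactness of the pullback functor (via \cite{popescu:equivariant-e-theory}*{Corollary 1.26}, recalled in the paragraph before Lemma \ref{lem:continuity}) to obtain the two exact sequences
\[
0 \to s^*I \to s^*A \to s^*(A/I) \to 0, \qquad 0 \to r^*I \to r^*A \to r^*(A/I) \to 0.
\]
The hypothesis $\alpha(s^*I)=r^*I$ says precisely that $\alpha$ restricts to a $C_0(G)$-linear $*$-isomorphism $s^*I \to r^*I$, and therefore descends to a $C_0(G)$-linear $*$-isomorphism $\bar\alpha \colon s^*(A/I) \to r^*(A/I)$. The cocycle identity $\bar\alpha_g \circ \bar\alpha_h = \bar\alpha_{gh}$ at the fiber level then follows from $\alpha_g \circ \alpha_h = \alpha_{gh}$ together with the identification $(A/I)(x) \cong A(x)/I(x)$, which is itself a consequence of exactness of the fiber functor. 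This equips $(A/I,\bar\alpha)$ with the structure of a $G$-$C^*$-algebra.

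Finally, I would check that both $I \into A$ and $p \colon A \to A/I$ are $G$-equivariant in the sense of the last definition of Section \ref{sec:groupoids-actions}: for the inclusion, equivariance is exactly the restriction statement $\alpha(s^*I)=r^*I$; for $p$, it is the defining property of $\bar\alpha$. I do not anticipate a real obstacle: the whole argument is a diagram chase powered by exactness of the pullback functor. The only small verification worth recording is that the fiberwise maps $\bar\alpha_g$ really coincide with the quotients of $\alpha_g$ modulo $I(s(g))$ and $I(r(g))$, which falls out immediately by comparing both constructions on elements of the form $s^*(a)$ for $a \in A$.
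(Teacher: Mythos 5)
Your proposal is correct and follows essentially the same route as the paper's (much terser) proof: descend $\theta_A$ to make $A/I$ a $C_0(G^{(0)})$-algebra with $C_0(G^{(0)})$-linear quotient map, then use exactness of the pullback functor together with $\alpha(s^*I)=r^*I$ to restrict and descend $\alpha$ to a $C_0(G)$-linear $*$-isomorphism $s^*(A/I) \to r^*(A/I)$ satisfying the cocycle identity. Your write-up simply makes explicit the verifications the paper leaves as "easy" and "immediate."
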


\begin{proof}
It is easy to see that $A/I$ can be made into a $C_0(G^{(0)})$-algebra via $\theta_A$ such that the extension 
\[
0 \rightarrow I \rightarrow A \rightarrow A/I \rightarrow 0  
\]is a $C_0(G^{(0)})$-extension. The result now is immediate from the definition of a $G$-invariant ideal and the fact that the pullback functor is exact.
\end{proof}

\section{Some technical results on quasi-central approximate units}\label{sec:quasi-central} As the title suggests, in this section, we gather some technical results on quasi-central approximate units that form one of the main technical tools for our main theorem. More precisely, we present variations of the Kasparov lemma (\cite{kasparov:equivariant-kk}*{Lemma 1.4}) as presented in \cite{forough-gardella-thomsen:lifting}.

\begin{lemma}\label{lemma:quasi-central}
Let $J$ be a separable $C^*$-algebra, $0 \leq d \leq 1$ be a strictly positive element in $J$ and $M_0 \subseteq \mathcal{M}(J)$ be a separable $C^*$-subalgebra. Then there exists a countable approximate unit $(x_n)_{n \in \mathbb{N}}$ for $J$ contained in $C^*(d)$ with the following properties.
\begin{itemize}
\item For each $n \in \mathbb{N}$, $0 \leq x_n\leq 1$ and $x_{n+1}x_n=x_n$.
\item For each $b \in M_0$, $\left\lVert x_nb-bx_n \right\rVert \rightarrow 0$ as $n \rightarrow \infty$.
\end{itemize}
\end{lemma}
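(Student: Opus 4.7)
The plan is to construct $x_n$ inductively as finite convex combinations of plateau functions of $d$, combining Arveson's quasi-central approximate-unit trick with a careful parameter choice that preserves the nesting structure forced by $x_{n+1} x_n = x_n$.

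First I would fix dense sequences $\{b_k\}_{k \in \mathbb{N}}$ in $M_0$ and $\{a_k\}_{k \in \mathbb{N}}$ in $J$, and for $0 < \delta < \mu \leq 1$ let $g_{\delta,\mu} : [0,1] \to [0,1]$ be the continuous plateau function vanishing on $[0,\delta]$, equal to $1$ on $[\mu,1]$, and linear on $[\delta,\mu]$. Since $d$ is strictly positive, the net $\{g_{\delta,\mu}(d)\}$ (directed by $\mu \to 0$) is an approximate unit for $J$ contained in $C^*(d)$ with $0 \leq g_{\delta,\mu}(d) \leq 1$.

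Proceeding inductively, suppose $x_1,\ldots,x_{n-1}$ have already been produced, each of the form $x_m = \sum_i \lambda_{m,i}\, g_{\delta_{m,i},\mu_{m,i}}(d)$ (a finite convex combination), satisfying the conditions of the lemma through level $n-1$. Setting $s_{n-1} := \min_i \delta_{n-1,i} > 0$, the function of $d$ representing $x_{n-1}$ vanishes on $[0,s_{n-1}]$. I would then apply the Arveson/Kasparov quasi-central approximate-unit lemma to the sub-net $\{g_{\delta,\mu}(d) : 0 < \delta < \mu \leq s_{n-1}\}$ (which remains an approximate unit for $J$) together with the separable subalgebra $M_0 \subseteq \mathcal{M}(J)$, to obtain a finite convex combination $x_n = \sum_i \lambda_{n,i}\, g_{\delta_{n,i},\mu_{n,i}}(d)$ with all $\mu_{n,i} \leq s_{n-1}$ and
\begin{equation*}
\|x_n a_k - a_k\| < 1/n, \qquad \|x_n b_k - b_k x_n\| < 1/n, \qquad k = 1,\ldots,n.
\end{equation*}
Since $\mu_{n,i} \leq s_{n-1}$ for every $i$, each plateau $g_{\delta_{n,i},\mu_{n,i}}$ equals $1$ on $[s_{n-1},1]$ and hence so does the convex combination $x_n$ (the weights sum to $1$); by functional calculus this yields $x_n x_{n-1} = x_{n-1}$. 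Density of $\{a_k\}$ and $\{b_k\}$ together with the standard $\epsilon/3$ trick then promotes the estimates above to the approximate-unit and quasi-centrality properties for all of $J$ and $M_0$.

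The main obstacle is reconciling the product condition with quasi-centrality: the former, through functional calculus on $C^*(d)$, forces $x_{n+1}$ to equal $1$ on the spectral support of $x_n$, while Arveson's trick (essentially the only general tool for quasi-centrality) uses arbitrary convex combinations and does not a priori respect any prescribed plateau or nesting shape. The resolution is the observation that restricting Arveson's input to the approximate sub-unit of plateau functions whose onsets lie below the support threshold of $x_n$ automatically produces convex combinations equal to $1$ on that support, without sacrificing the commutator and approximate-unit estimates that Arveson's lemma supplies.
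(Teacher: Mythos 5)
Your argument is correct: membership in $C^*(d)$, positivity, and the nesting relation follow exactly as you say from the functional-calculus identity ($\phi_n \equiv 1$ on $[s_{n-1},1]$, which contains the support of $\phi_{n-1}$, since $\phi_{n-1}$ vanishes on $[0,s_{n-1}]$), and restricting the Arveson convex-combination selection to the tail $\{g_{\delta,\mu}(d) : \mu \le s_{n-1}\}$ is legitimate because this tail is itself an (increasing) approximate unit --- note $(1-g_{\delta,\mu}(t))t \le \mu$ uniformly in $\delta$ --- and the weak-null commutator argument passes to subnets, with linearity of $u \mapsto [u,b]$ turning small convex combinations of commutators into small commutators of the convex combination. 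The paper states this lemma without proof, importing it from \cite{forough-gardella-thomsen:lifting}, and your plateau-plus-tail-convexity construction is essentially the standard argument behind that reference, so the two approaches agree.
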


The following lemma is the groupoid analogue of \cite{kasparov:equivariant-kk}*{Lemma 1.4} proved in \cite{le_gall:equivariant-kk}*{Corollary 6.1} and \cite{tu:baum-connes-amenable}*{Corollary 4.4}.

\begin{lemma}\label{lemma:g-quasi-central}
Let $G$ be a second countable groupoid, $(J,\gamma)$ be a separable, $G$-$C^*$-algebra, $0 \leq d \leq 1$ be a strictly positive element in $J$ and $M_0 \subseteq \mathcal{M}(J)$ be a separable $C^*$-subalgebra. Then there exists a countable approximate unit $(u_n)_{n \in \mathbb{N}}$ of $J$ contained in $C^*(d)$ with the following properties. 
\begin{enumerate}
\item For each $n \in \mathbb{N}$, $0 \leq u_n\leq 1$ and $u_{n+1}u_n=u_n$.
\item For each $b \in M_0$, $\left\lVert u_nb-bu_n \right\rVert \rightarrow 0$ as $n \rightarrow \infty$.
\item For any compact subset $K \subseteq G$,
\[
  \max_{g \in K}\left\lVert \gamma_{g}(u_n(s(g))-u_n(r(g)) \right\rVert \rightarrow 0 \text{ as } n \rightarrow \infty.  
\]
\end{enumerate}
\end{lemma}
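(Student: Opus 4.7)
The strategy is to begin with a quasi-central approximate unit furnished by Lemma~\ref{lemma:quasi-central}, impose condition (3) by a Mazur-type convex combination argument on each compact piece of $G$, and diagonalize. Using second countability of $G$, fix an exhaustion $G=\bigcup_{k}K_k$ by compact subsets with $K_k\subseteq K_{k+1}$ and dense sequences $(a_k)\subseteq J$, $(b_k)\subseteq M_0$. Apply Lemma~\ref{lemma:quasi-central} to obtain an approximate unit $(x_n)\subseteq C^*(d)$ for $J$ satisfying (1) and (2); each $u_n$ will be built as a finite convex combination of the $x_n$.

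The key step is to enforce (3). For fixed compact $K\subseteq G$, let $\mathcal{J}=\bigsqcup_{x\in G^{(0)}}J(x)$ denote the upper semi-continuous bundle associated to $J$, let $E_K=\Gamma(K;r^*\mathcal{J}|_K)$ be the $C^*$-algebra of continuous sections of its pullback restricted to $K$, and define the bounded linear map
\[
T_K:J\to E_K,\qquad T_K(x)(g)=\gamma_g(x(s(g)))-x(r(g)),
\]
well-defined by Lemma~\ref{lem:continuity}. I claim $T_K(x_n)\to 0$ \emph{weakly} in $E_K$. Fix $\phi\in E_K^*$; by the standard disintegration for functionals on a bundle $C^*$-algebra over a compact base, write $\phi(h)=\int_K\omega_g(h(g))\,\d\mu(g)$ for a finite Radon measure $\mu$ on $K$ and a bounded measurable section $g\mapsto\omega_g\in J(r(g))^*$. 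For each $g$, both $x_n(r(g))$ and $\gamma_g(x_n(s(g)))$ are approximate units of $J(r(g))$ (since $x_n\to 1$ strictly in $\mathcal{M}(J)$ and the induced maps $\xi_{r(g)}$ and $\bar\gamma_g\circ\xi_{s(g)}$ are strictly continuous on bounded sets). Decomposing $\omega_g$ into its Jordan parts and using that every positive linear functional attains its norm as the common limit on any approximate unit, one obtains $\omega_g(T_K(x_n)(g))\to 0$ pointwise in $g$. Since the integrand is uniformly bounded by $2\|\omega_g\|$ and $\mu$ is finite, dominated convergence yields $\phi(T_K(x_n))\to 0$. Mazur's theorem then places $0$ in the norm closure of the convex hull of $\{T_K(x_n):n\geq N\}$ for any $N$.

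The $u_n$ are then built inductively. Choose $L_n$ large enough that $\|x_i a_k - a_k\|<1/n$ and $\|x_i b_k - b_k x_i\|<1/n$ for all $i\geq L_n$ and $k\leq n$, and arrange $L_{n+1}>R_n$, where $u_n$ will be supported on indices in $[L_n,R_n]$. Applying Mazur to the tail indexed by $i\in[L_n,\infty)$ and the compact set $K_n$ produces a finite convex combination $u_n=\sum_{i=L_n}^{R_n}\lambda_i^{(n)}x_i$ with $\|T_{K_n}(u_n)\|<1/n$, delivering (3). Convex combinations preserve $0\leq\cdot\leq 1$ and membership in $C^*(d)$; moreover $L_{n+1}>R_n$ together with the relation $x_ix_j=x_j$ for all $i>j$ (a consequence of $x_{i+1}x_i=x_i$ applied inductively in the commutative algebra $C^*(d)$) delivers $u_{n+1}u_n=u_n$. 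Condition (2) on $u_n$ is immediate from the choice of $L_n$ and the triangle inequality for convex sums.

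The main obstacle is the weak-convergence claim for $T_K(x_n)$: one must have a clean disintegration of $E_K^*$ for an upper semi-continuous (not necessarily continuous) $C^*$-bundle over a compact base, and one must verify the ``two approximate units give the same limit on every bounded functional'' principle via the Jordan decomposition. Everything else amounts to a standard diagonalization together with the elementary bookkeeping for the nested system $(x_n)$.
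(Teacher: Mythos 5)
Your proposal is correct and takes essentially the same route as the paper: the paper applies Lemma \ref{lemma:quasi-central} and then invokes the proof of \cite{tu:baum-connes-amenable}*{Lemma 4.3} to obtain $u_n \in \mathrm{Conv}(x_m)_{m \geq n}$, and your weak-convergence-plus-Mazur argument (disintegration of functionals on the section algebra over a compact base, norm-attainment of positive functionals along approximate units, dominated convergence) is precisely the content of that cited lemma, which the paper treats as a black box. Your explicit bookkeeping $L_{n+1} > R_n$ also correctly supplies a point the paper leaves implicit, namely that $u_{n+1}u_n = u_n$ does not follow from $u_n \in \mathrm{Conv}(x_m)_{m \geq n}$ alone but requires the convex combinations to have disjoint, increasing index supports together with the relation $x_i x_j = x_j$ for $i > j$.
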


\begin{proof}
We apply Lemma \ref{lemma:quasi-central} to find a countable approximate unit $(x_n)_{n \in \N}$ contained in $C^*(d)$ satisfying (1)--(3). By the proof of \cite{tu:baum-connes-amenable}*{Lemma 4.3}, there exists a countable approximate unit $(u_n)_{n \in \mathbb{N}}$ such that for all $n \in \mathbb{N}$, $u_n \in \mathrm{Conv}(x_m)_{m \geq n}$. We observe that since each $u_n \in \mathrm{Conv}(x_m)_{m\geq n}$, $(u_n)_{n \in \mathbb{N}}$ satisfies (1)--(3) as well.
\end{proof}

We recall the following lemma. 

\begin{lemma}[{\cite{arveson:extensions}*{Lemma 1.1}}] \label{lemma:square-root}
Let $J$ be a $C^*$-algebra, $\epsilon >0$ and $f$ be a continuous function on $[0,1]$ satisfying $f(0)=0$. Then there is a $\delta >0$ such that for each pair $(a,e)$ of contractions in $J$ with $a \geq 0$, one has
\[
\left\lVert ae-ea \right\rVert \leq \delta \implies \left\lVert f(a)e-ef(a) \right\rVert \leq \epsilon.
\]
\end{lemma}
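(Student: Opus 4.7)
The plan is to reduce to the polynomial case via Weierstrass approximation on $[0,1]$. Concretely, I would argue as follows.

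First, I would treat the case $f(x)=x^n$, $n\geq 1$, via the telescoping identity
\[
a^n e - e a^n = \sum_{k=0}^{n-1} a^{n-1-k}(ae-ea)a^{k},
\]
from which, since $\|a\|\leq 1$ and $\|e\|\leq 1$, one reads off $\|a^n e-ea^n\|\leq n\,\|ae-ea\|$. Linearity then gives, for any polynomial $p(x)=\sum_{k=1}^{N} c_k x^k$ with $p(0)=0$, the estimate
\[
\|p(a)e-ep(a)\|\;\leq\;\Bigl(\sum_{k=1}^{N}k|c_k|\Bigr)\|ae-ea\|.
\]

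Next, given $\varepsilon>0$ and $f$ continuous on $[0,1]$ with $f(0)=0$, I would invoke the Stone--Weierstrass theorem to pick a polynomial $p$ with $p(0)=0$ such that $\sup_{t\in[0,1]}|f(t)-p(t)|<\varepsilon/3$. Since $a\geq 0$ is a contraction, its spectrum lies in $[0,1]$, so continuous functional calculus yields $\|f(a)-p(a)\|<\varepsilon/3$. Using $\|e\|\leq 1$, the triangle inequality gives
\[
\|f(a)e-ef(a)\|\;\leq\;2\|f(a)-p(a)\|+\|p(a)e-ep(a)\|\;<\;\tfrac{2\varepsilon}{3}+\|p(a)e-ep(a)\|.
\]
Setting $C_p:=\sum_{k=1}^{N}k|c_k|$ and choosing $\delta:=\varepsilon/(3C_p)$ (with $\delta=1$ if $C_p=0$), the polynomial estimate forces $\|p(a)e-ep(a)\|\leq\varepsilon/3$ whenever $\|ae-ea\|\leq\delta$, and the desired conclusion follows.

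Since the estimates depend only on $\|a\|, \|e\|\leq 1$ and on the polynomial chosen (which depends only on $f$ and $\varepsilon$), the constant $\delta$ is uniform over all contractive pairs $(a,e)$ with $a\geq 0$ in any $C^*$-algebra $J$. There is no real obstacle here; the only mild subtlety is ensuring $p(0)=0$ so that $p(a)$ makes sense without a unit in $J$, which is automatic because $f(0)=0$ allows the Weierstrass approximant to be shifted by a constant and the constant absorbed into the error $\varepsilon/3$.
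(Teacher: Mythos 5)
Your proof is correct, and it is essentially the standard argument: the paper itself gives no proof, citing Arveson's Lemma 1.1, whose proof proceeds exactly as yours does, via the telescoping commutator estimate $\lVert a^n e - e a^n\rVert \leq n\lVert ae-ea\rVert$ and uniform polynomial approximation of $f$ on $[0,1]$ by a polynomial vanishing at $0$. The only point worth tightening is the shift of the Weierstrass approximant: you should approximate within $\varepsilon/6$ so that subtracting the constant $q(0)$ still leaves error at most $\varepsilon/3$, but this is exactly the bookkeeping you already indicate.
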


The following lemma is one of the main technical tools in the proof of our main theorem.

\begin{lemma}\label{lemma:averaging}
Let $G$ be a second countable groupoid, $(J,\gamma)$ be a separable, $G$-$C^*$-algebra, $0 \leq d \leq 1$ be a strictly positive element in $J$, $(W_n)_{n \in \mathbb{N}}$ be an increasing sequence of compact subsets in $\mathcal{M}(J)$ and $(K_n)_{n \in \mathbb{N}}$ be an increasing sequence of compact subsets in $G$. Then there exists a countable approximate unit $(u_n)_{n \in \N}$ of $J$ contained in $C^*(d)$ such that if we set
\[
\Delta_0=\sqrt{u_0}, \quad \Delta_n=\sqrt{u_n-u_{n-1}},
\]
for $n \geq 1$, the following properties are satisfied.
\begin{enumerate}
\item For each $n \in \N$, $0 \leq u_n \leq 1$ and $u_n=u_{n+1}u_n$.
\item For each $n \in \mathbb{N}$, $\Delta_{n+1}u_n=0$.
\item For any compact subset $K \subseteq G$, \[\max_{g \in K} \left\lVert \gamma_g(u_n(s(g))-u_n(r(g)) \right\rVert \rightarrow 0 \text{ as } n \rightarrow \infty.\]
\item For each $n \in \mathbb{N}$ and for any $m \in W_n$,
\[
  \left\lVert \Delta_n m-m\Delta_n \right\rVert \leq 1/n^2.  
\]
\item For each $n \in \mathbb{N}$ and for any $g \in K_n$,
\[
  \left\lVert \gamma_g(\Delta_n(s(g)))-\Delta_n(r(g)) \right\rVert \leq 1/n^2.  
\]
\item For each $n \in \mathbb{N}$, for any $m \in W_n$,
\[
  \left\lVert m(1-u_n) \right\rVert \leq \left\lVert q_J(m) \right\rVert + 1/n^2.  
\]
\end{enumerate}
\end{lemma}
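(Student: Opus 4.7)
My plan is to extract $(u_n)$ as a subsequence, with rapidly growing gaps, of an approximate unit furnished by Lemma \ref{lemma:g-quasi-central}, using Lemma \ref{lemma:square-root} to transfer the quantitative commutator and invariance estimates from $u_n-u_{n-1}$ to its square root $\Delta_n$. Since each $W_n$ is norm-compact and hence separable, the closed $*$-subalgebra $M_0 \subseteq \mathcal{M}(J)$ generated by $\bigcup_{n} W_n$ is separable. I would invoke Lemma \ref{lemma:g-quasi-central} with this $M_0$ and the given strictly positive $d$ to obtain a countable approximate unit $(v_k)_{k \in \mathbb{N}}$ of $J$ contained in $C^*(d)$, satisfying $0 \leq v_k \leq 1$, $v_{k+1}v_k = v_k$, $\|v_k m - m v_k\| \to 0$ for every $m \in M_0$, and the groupoid quasi-invariance $\sup_{g \in K}\|\gamma_g(v_k(s(g)))-v_k(r(g))\|\to 0$ for every compact $K \subseteq G$.

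Next, I would select indices $k_0 < k_1 < k_2 < \cdots$ recursively and set $u_n := v_{k_n}$. Given $k_{n-1}$, I would pick $k_n$ large enough that three estimates hold simultaneously on the compact sets $W_n$ and $K_n$. Firstly, $\|m(1-v_{k_n})\| \leq \|q_J(m)\|+1/n^2$ uniformly for $m \in W_n$, giving (6); this is the standard fact that $\|m(1-v_k)\| \to \|q_J(m)\|$, upgraded by a $\varepsilon/2$ argument to uniform convergence on the norm-compact set $W_n$. Secondly, $\|[v_{k_n},m]\|$ is arranged so small on $W_n$ that, combined with the inductively controlled $\|[v_{k_{n-1}},m]\|$, the commutator $\|[u_n-u_{n-1},m]\|$ lies below the $\delta$-threshold produced by Lemma \ref{lemma:square-root} for $f(t)=\sqrt{t}$ and $\varepsilon=1/n^2$, which then yields (4). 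Thirdly, $\sup_{g \in K_n}\|\gamma_g(v_{k_n}(s(g)))-v_{k_n}(r(g))\|$ is made small; since $\gamma_g$ is a $*$-isomorphism commuting with continuous functional calculus, $\gamma_g(\Delta_n(s(g))) = \sqrt{\gamma_g((u_n-u_{n-1})(s(g)))}$, so combining the groupoid quasi-invariance for $u_n$ and $u_{n-1}$ with the uniform continuity of $\sqrt{\cdot}$ on $[0,1]$ yields (5).

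Properties (1) and (3) then come essentially for free: (1) follows by iterating $v_{k+1}v_k=v_k$ to obtain $v_l v_k = v_k$ for all $l \geq k$, so that $u_{n+1}u_n = v_{k_{n+1}}v_{k_n} = v_{k_n} = u_n$; (3) is inherited directly from $(v_k)$ by passing to the subsequence. The main obstacle I anticipate is property (2), the exact orthogonality $\Delta_{n+1}u_n = 0$: working in the commutative algebra $C^*(d)$ and squaring reduces this to the algebraic identity $u_n(u_{n+1}-u_n)u_n=0$, which, together with (1), amounts to arranging that the spectral support of $u_{n+1}-u_n$ is disjoint from that of $u_n$, a condition not secured by a plain subsequence extraction. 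I would resolve this by replacing the raw $v_{k_n}$ with $f_n(d)$ for continuous functions $f_n:[0,1]\to[0,1]$ whose supports telescope in the required way (chosen to be close in norm to $v_{k_n}$ so that the preceding estimates survive), and the delicate step will be to verify that this functional-calculus refinement, carried out inductively alongside the choice of $k_n$, preserves all of (4)--(6) within the allotted tolerances $1/n^2$.
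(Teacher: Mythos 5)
Your construction of the subsequence and your derivation of (1) and (3)--(6) reproduce the paper's proof almost verbatim: the paper likewise generates the separable $M_0$ from $\bigcup_n W_n$, invokes Lemma \ref{lemma:g-quasi-central}, transfers the commutator and invariance estimates from $u_n-u_{n-1}$ to $\Delta_n$ by combining Lemma \ref{lemma:square-root} with continuity of the square root, and gets (6) from $\lVert q_J(m)\rVert=\lim_k\lVert m(1-v_k)\rVert$ by a further subsequence extraction. Two small points there: you should record that $u_n-u_{n-1}\geq 0$ (immediate in the commutative algebra $C^*(d)$ from $u_nu_{n-1}=u_{n-1}$), and for (5) the scalar uniform continuity of $\sqrt{\cdot}$ on $[0,1]$ is not quite the right invocation, since $\gamma_g\bigl((u_n-u_{n-1})(s(g))\bigr)$ and $(u_n-u_{n-1})(r(g))$ are two positive contractions that need not commute; what is needed is operator-norm continuity of the square root, obtained either by polynomial approximation (as in the paper) or from the inequality $\lVert b^{1/2}-c^{1/2}\rVert\leq\lVert b-c\rVert^{1/2}$ for positive $b,c$.

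The genuine gap is your endgame for (2). You correctly observed that a plain subsequence does not yield $\Delta_{n+1}u_n=0$, but the proposed repair by functions $f_n(d)$ with telescoping supports cannot succeed, because (1) and (2) are jointly contradictory except in degenerate cases: since $\Delta_{n+1}u_n=0$ is equivalent to $\Delta_{n+1}^2u_n=0$, properties (1) and (2) give $u_n^2=u_{n+1}u_n-(u_{n+1}-u_n)u_n=u_n$, so every $u_n$ would be a projection; for $J=C_0(\R)$ the algebra $C^*(d)$ contains no nonzero projections, so no choice of $f_n$, however the supports telescope, satisfies both conditions. The correct conclusion at the point where you got stuck is that (2) is misstated by one index: from (1) one has $u_lu_k=u_k$ for all $l>k$, hence $\Delta_{n+2}^2u_n=(u_{n+2}-u_{n+1})u_n=u_n-u_n=0$, i.e.\ $\Delta_{n+2}u_n=0$ (equivalently $\Delta_{n+1}u_{n-1}=0$) holds automatically, with no further selection. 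This gap-two orthogonality is exactly what the application needs: in the proof of Lemma \ref{lem:lift-calkin} one wants $u_{n_0}\Delta_n=0$ for all $n\geq N$, which follows from $\Delta_nu_{n-2}=0$ and $u_{n-2}u_{n_0}=u_{n_0}$ once $N>n_0+1$. The paper's own proof, which is precisely your subsequence extraction, likewise only delivers this corrected form of (2) (it passes over the point silently); so the difficulty you detected is real, but the cure is to amend the statement, not to force disjoint supports.
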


\begin{proof}
The proof is similar to the proof of \cite{forough-gardella-thomsen:lifting}*{Lemma 2.3}. Let $M_0 \subseteq \mathcal{M}(J)$ be the separable $C^*$-subalgebra generated by $\bigcup_{n \in \mathbb{N}} W_n$ and let $(y_n)_{n \in \mathbb{N}}$ be a countable approximate unit of $J$ contained in $C^*(d)$ satisfying (1)--(4) of Lemma \ref{lemma:g-quasi-central}. By approximating the square root function on $[0,1]$ by polynomials, we see that given $\epsilon > 0$, there is a $\delta >0$ such that 
\[
\left\lVert \gamma_g(a(s(g)))-a(r(g)) \right\rVert \leq \delta \implies \left\lVert \gamma_g\left(\sqrt{a(s(g))}\right)-\sqrt{a(r(g))} \right\rVert \leq \epsilon,
\]for any positive contraction $a \in J$ and $g \in G$. Together with Lemma \ref{lemma:square-root}, this implies that for each $n \in \mathbb{N}$, there is a $\delta_n > 0$ such that given a positive contraction $a \in J$,
\[
\left\lVert \gamma_g(a(s(g)))-a(r(g)) \right\rVert \leq \delta_n \implies \left\lVert \gamma_g\left(\sqrt{a(s(g))}\right)-\sqrt{a(r(g))} \right\rVert \leq \frac{1}{n^2},  
\]for any $g \in K_n$ and 
\[
\left\lVert am-ma \right\rVert \leq \delta_n \implies \left\lVert \sqrt{a}m-m\sqrt{a} \right\rVert \leq \frac{1}{n^2}, 
\]for any $m \in W_n$. Now using (3) of Lemma \ref{lemma:g-quasi-central}, we find a subsequence $(u_n)_{n \in \mathbb{N}}$ of $(y_n)_{n \in \mathbb{N}}$ such that given any $n \in \mathbb{N}$,
\[
\left\lVert \gamma_g((u_{n}-u_{n-1})(s(g)))-(u_{n}-u_{n-1})(r(g)) \right\rVert \leq \delta_n,
\]for any $g \in K_n$ and
\[
\left\lVert (u_{n}-u_{n-1})m-m(u_{n}-u_{n-1}) \right\rVert \leq \delta_n,
\]for any $m \in W_n$. Using the fact that 
\[
\left\lVert q_J(m) \right\rVert=\lim_{n \to \infty}\left\lVert m(1-y_n) \right\rVert  
\]
for all $m \in \mathcal{M}(J)$ and passing to a further subsequence, we obtain $(u_n)_{n \in \mathbb{N}}$ satisfying (1)--(6).
\end{proof}

\section{The main results}\label{sec:main-results} We now come, as the title suggests, to the main results of this article. The approach is standard, as is already taken in \cite{forough-gardella-thomsen:lifting} and \cite{gabe:lifting}; namely, we first consider the case when the codomain is a Calkin algebra and then use the Busby invariant to pass to the general case. We begin with the following definition.

\begin{definition}
Let $G$ be a groupoid, $(A,\alpha)$ and $(B,\beta)$ be two $G$-$C^*$-algebras, $\phi : A \rightarrow B$ be a completely positive and contractive map, $F \subset A$ be a finite subset of $A$ and $K \subset G$ be a compact subset of $G$. The \emph{equivariance modulus} of $\phi$ with respect to $\alpha$, $\beta$, $F$ and $K$, denoted $\left\lVert (\phi,\alpha,\beta) \right\rVert_{F,K}$ is defined to be the number
\begin{equation}
\left\lVert (\phi,\alpha,\beta) \right\rVert_{F,K}=\max_{a \in F}\sup_{g \in K}\left\lVert \beta_g (\phi_{s(g)}(a(s(g))-\phi_{r(g)}(\alpha_g(a(s(g)))) \right\rVert. 
\end{equation}
\end{definition}

We remark that the above definition still makes sense when $G$ is second countable and $B$ is $\mathcal{M}(I)$ or $\mathcal{Q}(I)$ for a separable, $G$-$C^*$-algebra $(I,\gamma)$, with $\bar \gamma$, respectively $\tilde \gamma$, playing the role of $\beta$ and we will continue to write $\left\lVert (\phi,\alpha,\bar \gamma) \right\rVert_{F,K}$, respectively $\left\lVert (\phi,\alpha,\tilde \gamma) \right\rVert_{F,K}$, for the equivariance modulus. Now we can state the technical lemma for our main result.

\begin{lemma}\label{lem:lift-calkin}
Let $G$ be a second countable groupoid, $(A,\alpha)$ and $(I,\gamma)$ be two $G$-$C^*$\nobreakdash-algebras with $A$ and $I$ separable. Let $\phi : A \to \mathcal{M}(I)$ and $\psi : A \to \mathcal{Q}(I)$ be two $C_0(G^{(0)})$\nobreakdash-linear, completely positive and contractive maps such that $q_I \circ \phi=\psi$. Then given a finite subset $F \subset A$, a compact set $K \subset G$ and $\epsilon >0$, there exists a $C_0(G^{(0)})$-linear, completely positive and contractive map $\phi' : A \to \mathcal{M}(I)$ such that $q_I \circ \phi'=\psi$ and
\begin{equation}\label{eq:equivariance-modulus}
\left\lVert (\phi',\alpha,\bar \gamma) \right\rVert_{F,K} \leq \left\lVert (\psi,\alpha,\tilde \gamma) \right\rVert_{F,K} + \epsilon.
\end{equation}
\end{lemma}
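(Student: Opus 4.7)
My plan is to follow the standard Arveson-type averaging technique, adapted to the groupoid setting. First, I apply Lemma~\ref{lemma:averaging} to $I$ with compact sets $W_n\subset\mathcal{M}(I)$ chosen so that $\phi(F)$ and a countable dense subset of $\phi(A)$ lie eventually in the $W_n$'s, with compact $K_n\subset G$ chosen so that $K\subseteq K_n$, and---passing to a subsequence if necessary---with the error constants in properties (4)--(6) aggregated to less than $\epsilon$ (up to universal constants). This produces a countable approximate unit $(u_n)\subset C^*(d)\subset I$ and associated $\Delta_n=\sqrt{u_n-u_{n-1}}$ (with $\Delta_0=\sqrt{u_0}$) satisfying (1)--(6); in particular $\Delta_n\Delta_m=0$ for $n\neq m$.

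Next, I define $\phi'(a):=\sum_n\Delta_n\phi(a)\Delta_n$, convergent strictly in $\mathcal{M}(I)$. The orthogonality of the $\Delta_n$'s makes the partial sums have norm $\sup_n\|\Delta_n\phi(a)\Delta_n\|\le\|a\|$, so $\phi'$ is well-defined, completely positive and contractive; $C_0(G^{(0)})$-linearity is immediate from the fact that $\Delta_n\in I$ commutes with $\theta_I(C_0(G^{(0)}))\subset\mathcal{Z}\mathcal{M}(I)$. To show $q_I\circ\phi'=\psi$, I will use the identity $\phi'(a)-\phi(a)=\sum_n[\Delta_n,\phi(a)]\Delta_n$ (obtained using $\sum_n\Delta_n^2=1$ strictly): when $\phi(a)\in W_n$ eventually, property (4) gives $\|[\Delta_n,\phi(a)]\|\le 1/n^2$, so the series converges in norm and each summand lies in $I$; by norm-continuity of $\phi,\phi'$ and closedness of $I$, the inclusion $\phi'(a)-\phi(a)\in I$ extends from the dense subset to all of $A$.

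For the equivariance estimate, set $X_n=\Delta_n(r(g))$, $Y_n=\gamma_g(\Delta_n(s(g)))$, $P=\phi_{r(g)}(\alpha_g(a(s(g))))$, and $Q=\bar\gamma_g(\phi_{s(g)}(a(s(g))))$. The defect equals $\sum_n(X_nPX_n-Y_nQY_n)$, which I split as $\sum_n X_n(P-Q)X_n+\sum_n[(X_n-Y_n)QX_n+Y_nQ(X_n-Y_n)]$. The second sum is bounded by $2\|a\|\sum_n\|X_n-Y_n\|<\epsilon/2$ via property (5). For the main term, the orthogonality of the $X_n$'s gives $\|\sum_n X_n(P-Q)X_n\|=\sup_n\|X_n(P-Q)X_n\|$; combining property (6) applied to a global lift in $\mathcal{M}(I)$ of the $\phi$-defect with the bound $\|q_{I(r(g))}(P-Q)\|\le\|(\psi,\alpha,\tilde\gamma)\|_{F,K}$, this is at most $\|(\psi,\alpha,\tilde\gamma)\|_{F,K}+\epsilon/2$, completing the desired inequality.

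The hard part will be this last step: the elements $P-Q$ live in fibers of the bundle $\{\mathcal{M}(I(y))\}_{y\in G^{(0)}}$ rather than in the single algebra $\mathcal{M}(I)$, so property (6) of Lemma~\ref{lemma:averaging} cannot be applied directly. The resolution is to select a global lift in $\mathcal{M}(I)$ encoding the family $\{P-Q:g\in K,\,a\in F\}$, include it in the compact sets $W_n$, and then carefully transfer the resulting Calkin-norm estimate from $\mathcal{M}(I)$ down to the fibers---using the compactness of $K$, the finiteness of $F$, and the upper semi-continuity of the bundle of multipliers---so as to make the bound uniform in $g\in K$ and $a\in F$.
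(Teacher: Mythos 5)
There is a genuine gap, and it is exactly where you define $\phi'$. You take the \emph{full} sum $\phi'(a)=\sum_{n\geq 0}\Delta_n\phi(a)\Delta_n$, but the key estimate then fails at the initial terms. Your own decomposition shows why: by orthogonality, $\bigl\lVert\sum_n X_n(P-Q)X_n\bigr\rVert=\sup_n\lVert X_n(P-Q)X_n\rVert$, and the only mechanism for bounding a term by the Calkin-type norm of $P-Q$ is the identity $\Delta_n=(1-u_{n-1})\Delta_n$ (from property (2) of Lemma~\ref{lemma:averaging}), which inserts a factor $(1-u_{n-1})(r(g))$ next to $P-Q$; the quotient-norm approximation then only helps once $n$ exceeds a threshold $n_0$ depending on $F$, $K$ and $\epsilon$. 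The term $n=0$ has no such factor at all: $\lVert X_0(P-Q)X_0\rVert$ can be as large as $\lVert P-Q\rVert$. Concretely, if $\psi$ is exactly equivariant but the lifted defect $Q-P$ equals a norm-one element of $I(r(g))$ (e.g.\ a rank-one projection when $I(r(g))$ is the compacts), and $u_0$ is far enough along the approximate unit to nearly dominate it, then $\lVert X_0(P-Q)X_0\rVert\approx 1$ while $\lVert(\psi,\alpha,\tilde\gamma)\rVert_{F,K}+\epsilon=\epsilon$. No choice of $u_0$ rescues this: making $u_0$ large enough for the quotient-norm approximation at $n_0=0$ is precisely what makes the head compression pick up all of $P-Q$. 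The fix is the one the paper uses: define $\phi_N(a)=\sum_{l\geq N}\Delta_l\phi(a)\Delta_l$ and take $\phi'=\phi_N$ with $N>n_0$ chosen after $F$, $K$, $\epsilon$; the discarded head lies in $I$, so $q_I\circ\phi_N=\psi$ is retained, and the tail also makes your property-(5) sum $\sum_{l\geq N}1/l^2<\epsilon/(4\lVert a\rVert)$ without the ``aggregation by subsequence'' you invoke (which is itself not automatic: a subsequence of $(u_n)$ changes the $\Delta_n$ and properties (4)--(5) must be re-derived via the square-root lemma, i.e.\ you would be re-proving Lemma~\ref{lemma:averaging} with better constants).

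The second problem is that your ``hard part'' is left as a plan, and the plan as stated does not typecheck: the defects $P-Q$ for $g\in K$ live in the varying fibers $\mathcal{M}(I(r(g)))$, and in general there is no single element of $\mathcal{M}(I)$ whose fibers realize this family (it would at best be a multiplier of $r^*I$, to which property (6) for $I$ does not apply), so ``property (6) applied to a global lift in $\mathcal{M}(I)$ of the $\phi$-defect'' is not available. The paper resolves this differently and without any global lift: it never applies property (6) to the defect, but instead uses fiberwise that $(u_n(x))_n$ is an approximate unit of $I(x)$, so $\lVert q_{I(x)}(m)\rVert=\lim_n\lVert m((1-u_n)(x))\rVert$ (the sequence is decreasing since $u_{n+1}u_n=u_n$), chooses one $n_0$ uniform over the finite $F$ and compact $K$ as in \eqref{eq:quotient-norm}, and then controls the whole tail $\sum_{n\geq N}\Delta_n(r(g))(Q-P)\Delta_n(r(g))$ via the inserted factor $(1-u_{n_0})(r(g))$ and a Cauchy--Schwarz argument in the Hilbert module $\mathcal{M}(I(r(g)))^{l-k+1}$, yielding \eqref{eq:estimate-3}. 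Your skeleton (averaging against the quasi-invariant quasi-central approximate unit, splitting the defect into a $(P-Q)$-term plus property-(5) error terms) matches the paper, but as written the construction of $\phi'$ is wrong and the decisive estimate is missing.
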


\begin{proof}
We begin the proof by fixing an increasing sequence of finite subsets $(F_n)_{n \in \mathbb{N}}$ of $A$ with dense union in $A$ and $F \subseteq F_0$. Similarly, we fix an increasing sequence of compact subsets $(K_n)_{n \in \mathbb{N}}$ of $G$ whose union equals $G$ itself and $K \subseteq K_0$. Now we set $W_n=\phi(F_n)$, $n \in \mathbb{N}$, and by Lemma \ref{lemma:averaging} we obtain an approximate unit $(u_n)_{n \in \N}$ of $I$, hence the sequence $(\Delta_n)_{n \in \mathbb{N}}$ satisfying the conditions (1)--(6) as in Lemma \ref{lemma:averaging} for the sequences $(W_n)_{n \in \N}$ and $(K_n)_{n \in \N}$. By \cite{manuilov-thomsen:e-theory-special-case}*{Lemma 3.1}, we see that for every $n \in \mathbb{N}$, the sequence
\begin{equation}
\sum_{l=n}^k \Delta_l\phi(a)\Delta_l  
\end{equation}converges in the strict topology of $\mathcal{M}(I)$ as $k \to \infty$ and
\begin{equation}
\left\lVert \sum_{l=n}^\infty \Delta_l\phi(a)\Delta_l \right\rVert \leq 1.  
\end{equation}Therefore, for each $n \in \mathbb{N}$, we can define a completely positive and contractive map
\begin{equation}
\phi_n \colon A \to \mathcal{M}(I),    
\end{equation}by setting for each $a \in A$,
\begin{equation}
\phi_n(a)=\sum_{l=n}^\infty \Delta_l \phi(a) \Delta_l.  
\end{equation}We note that, since $\phi$ is $C_0(G^{(0)})$-linear, so is $\phi_n$, for each $n \in \mathbb{N}$. Furthermore, property (4) in Lemma \ref{lemma:averaging} implies that
\begin{equation}
\sum_{l=n}^{k} \left\lVert \phi(a) \Delta_l^2 - \Delta_l \phi(a) \Delta_l \right\rVert \leq \sum_{l=n}^{k} \left\lVert \phi(a) \Delta_l - \Delta_l \phi(a) \right\rVert \leq \sum_{l=n}^{k} \frac{1}{l^{2}},
\end{equation}for all $n \in \mathbb{N}$, $a \in F_n$ and $k > n$. Therefore, the series 
\begin{equation}
\sum_{l=0}^{\infty} \left(\phi(a) \Delta_l^2 -\Delta_l \phi(a) \Delta_l  \right),
\end{equation}converges in norm, for all $a \in \bigcup_{l \in \N}F_l$, hence for all $a \in A$, to an element in $I$ as each summand is an element of $I$. Since $\sum_{l=0}^{\infty}\Delta_l^2$ converges strictly to $1$, we have
\begin{equation}
\phi(a) - \phi_n(a) = \sum_{l=0}^{\infty} (\phi(a) \Delta_l^2- \Delta_l \phi(a) \Delta_l) + \sum_{l=0}^{n-1} \Delta_l \phi(a) \Delta_l,
\end{equation}for all $a\in A$ and $n \in \mathbb{N}$. Since each summand of the above sum is in $I$, $\phi(a) - \phi_n(a)$ is in $I$ as well, i.e., $q_I \circ \phi_n=q_I \circ \phi=\psi$, for all $n \in \N$. Now since $((u_n)(x))_{n \in \N}$ is an approximate unit of $I(x)$ for all $x \in G^{(0)}$, 
\begin{equation}
\left\lVert q_{I(x)}(m) \right\rVert=\lim_{n \rightarrow \infty}\left\lVert m((1-u_n)(x)) \right\rVert  
\end{equation}for $m \in \mathcal{M}(I(x))$. We can then find $n_0 \in \mathbb{N}$ such that whenever $a \in F$, $g \in K$ and $n \geq n_0$,
\begin{subequations}
\begin{equation}\label{eq:n0}
\sum_{l=n}^\infty \frac{1}{l^2} \leq \frac{\epsilon}{4\left\lVert a \right\rVert},
\end{equation}\text{and}
\begin{equation}\label{eq:quotient-norm}
\begin{aligned}
&{}\left\lVert (\bar \gamma_g (\phi_{s(g)}(a(s(g))))-\phi_{r(g)}(\alpha_g(a(s(g))))(1-u_n)(r(g)) \right\rVert\\
\leq &{} \left\lVert q_{I(r(g))}(\bar \gamma_g (\phi_{s(g)}(a(s(g))))-\phi_{r(g)}(\alpha_g(a(s(g)))) \right\rVert + \frac{\epsilon}{2}\\
=&{}\left\lVert \tilde \gamma_g(\psi_{s(g)}(a(s(g)))-\psi_{r(g)}(\alpha_g(a(s(g))))) \right\rVert + \frac{\epsilon}{2},
\end{aligned}      
\end{equation}
\end{subequations}where in the last step we use the commutativity of the square \eqref{eq:alphabar-alphatilde}. By (5) of Lemma \ref{lemma:averaging}, we obtain for all $l \in \mathbb{N}$ and for all $a \in F$,
\begin{equation}\label{eq:average}
\begin{aligned}
&{} \left\lVert \gamma_g(\Delta_l(s(g)))\bar \gamma_g(\phi_{s(g)}(a(s(g)))\gamma_g(\Delta_l(s(g)))-\Delta_l(r(g))\bar \gamma_g(\phi_{s(g)}(a(s(g)))\Delta_l(r(g)) \right\rVert \\ 
&{} \leq \frac{2\left\lVert a \right\rVert}{l^2}.
\end{aligned}
\end{equation}Therefore, for all $n \geq n_0$, $a \in F$ and $g \in K$, we have
\begin{equation}\label{eq:estimate-1}
\begin{aligned}
&{} \left\lVert \bar \gamma_g((\phi_n)_{s(g)}(a(s(g)))-\sum_{l=n}^\infty \Delta_l(r(g))\bar \gamma_g(\phi_{s(g)}(a(s(g)))\Delta_l(r(g)) \right\rVert \\
=&{} \left\lVert \sum_{l=n}^\infty \gamma_g(\Delta_l(s(g))) \bar \gamma_g(\phi_{s(g)}(a(s(g)))\gamma_g(\Delta_l(s(g)))-\sum_{l=n}^\infty \Delta_l(r(g))\bar \gamma_g(\phi_{s(g)}(a(s(g)))\Delta_l(r(g)) \right\rVert\\
\leq &{} \sum_{l=n}^\infty \frac{2\left\lVert a \right\rVert}{l^2} \leq \frac{\epsilon}{2},     
\end{aligned}    
\end{equation}where the first inequality is by \eqref{eq:average} and the second inequality is by \eqref{eq:n0}. We also note that we have used the identity ($a \in A$), \[(\phi_n)_x(a(x))=\sum_{l=n}^\infty \Delta_l(x) \phi_x(a(x))\Delta_l(x)\]which holds since $\xi_x$ is continuous with respect to the strict topology, for all $x \in G^{(0)}$; and the fact that $\bar \gamma_g$ is continuous with respect to the strict topology, for all $g \in G$. We now show that $\phi_n$ satisfies the inequality \eqref{eq:equivariance-modulus}in the statement for $n > n_0$. So we fix $N > n_0$ and observe that for $n \geq N$, $\Delta_n=(1-u_{n_0})\Delta_n$, since $u_{n_0}\Delta_n=0$. Therefore, for $l > k \geq N$, using Cauchy-Schwarz inequality for the Hilbert module $\mathcal{M}(I(r(g)))^{l-k+1}$ at the first step, and \eqref{eq:quotient-norm} at the second, we obtain
\begin{equation}
\begin{aligned}
&{} \left\lVert \sum_{n=k}^l\Delta_n(r(g))(\bar \gamma_g\phi_{s(g)}(a(s(g)))-\phi_{r(g)}(\alpha_g(a(s(g))))\Delta_n(r(g)) e(r(g))\right\rVert \\
= &{} \left\lVert \sum_{n=k}^l\Delta_n(r(g))(\bar \gamma_g(\phi_{s(g)}(a(s(g))))-\phi_{r(g)}(\alpha_g(a(s(g))))(1-u_{n_0})(r(g))\Delta_n(r(g))e(r(g)) \right\rVert \\
\leq &{} \left\lVert (\bar \gamma_g(\phi_{s(g)}(a(s(g))))- \phi_{r(g)}(\alpha_g(a(s(g))))(1-u_{n_0})(r(g)) \right\rVert \left\lVert \sum_{n=k}^le(r(g))^*\Delta_n^2(r(g))e(r(g)) \right\rVert \\
\leq &{} \left(\left\lVert \tilde \gamma_g(\psi_{s(g)}(a(s(g)))-\psi_{r(g)}(\alpha_g(a(s(g)))) \right\rVert + \frac{\epsilon}{2} \right) \left\lVert (e^*(u_l-u_k)e)(r(g)) \right\rVert
\end{aligned}  
\end{equation}for all $a \in F$, $g \in K$ and $e \in I$. We note that $e^*(u_l-u_k)e$ converges to zero in norm as $l,k \to \infty$, since $(u_l)_{l \in \N}$ is an approximate unit of $I$. Hence, 
\begin{equation}
\sum_{n=N}^\infty \Delta_n(r(g))(\bar \gamma_g\phi_{s(g)}(a(s(g)))-\phi_{r(g)}(\alpha_g(a(s(g))))(r(g))\Delta_n(r(g))
\end{equation}converges in the strict topology of $\mathcal{M}(I(r(g)))$ such that
\begin{equation}\label{eq:estimate-3}
\begin{aligned}
&{} \left\lVert \sum_{n=N}^\infty \Delta_n(r(g)) (\bar \gamma_g\phi_{s(g)}(a(s(g)))-\phi_{r(g)}(\alpha_g(a(s(g))))(r(g))\Delta_n(r(g)) \right\rVert \\
\leq &{} \left\lVert \tilde \gamma_g(\psi_{s(g)}(a(s(g)))-\psi_{r(g)}(\alpha_g(a(s(g)))) \right\rVert + \frac{\epsilon}{2},  
\end{aligned}
\end{equation}for all $a \in F$ and $g \in K$. Now we have all the estimates to end the proof, so to that end, fix $a \in F$ and $g \in K$. Then
\begin{equation}
\begin{aligned}
&{} \left\lVert (\bar \gamma_g((\phi_N)_{s(g)}(a(s(g))))-(\phi_N)_{r(g)}\alpha_g(a(s(g)))) \right\rVert \\
\leq &{} \left\lVert (\bar \gamma_g((\phi_N)_{s(g)}(a(s(g))))-\sum_{n=N}^\infty \Delta_n(r(g))\bar \gamma_g(\phi_{s(g)}(a(s(g))))\Delta_n(r(g))) \right\rVert \\
+ &{} \left\lVert (\sum_{n=N}^\infty \Delta_n(r(g))\bar \gamma_g(\phi_{s(g)}(a(s(g))))\Delta_n(r(g))-(\phi_N)_{r(g)}\alpha_g(a(s(g)))) \right\rVert \\
\leq &{} \frac{\epsilon}{2} + \left\lVert (\sum_{n=N}^\infty \Delta_n(r(g))(\bar \gamma_g\phi_{s(g)}(a(s(g)))-\phi_{r(g)}(\alpha_g(a(s(g))))\Delta_n(r(g))) \right\rVert \\
\leq &{} \epsilon + \left\lVert \tilde \gamma_g(\psi_{s(g)}(a(s(g)))-\psi_{r(g)}(\alpha_g(a(s(g)))) \right\rVert,
\end{aligned} 
\end{equation}where, the second inequality is by \eqref{eq:estimate-1} and the definition of $\phi_N$; and the final inequality is by \eqref{eq:estimate-3}. This completes the proof.
\end{proof}

Now we are in the position to prove our main result.

\begin{theorem}\label{theo:lift}
Let $G$ be a second countable groupoid and $(A,\alpha)$ be a separable, $G$-$C^*$-algebra. Let  
\[
0 \rightarrow (I,\gamma) \rightarrow (B,\beta) \xrightarrow{p} (D,\delta) \rightarrow 0
\]be a $G$-$C^*$-extension with $B$ separable. Let $\phi : A \rightarrow B$ and $\psi : A \rightarrow D$ be two $C_0(G^{(0)})$-linear, completely positive and contractive maps such that $p \circ \phi=\psi$. Then given $\epsilon > 0$, a finite subset $F \subset A$ of $A$, a compact subset $K \subset G$ of $G$, there exists a $C_0(G^{(0)})$-linear, completely positive and contractive map $\phi' : A \rightarrow B$ such that $p \circ \phi'=\psi$ and 
\begin{equation}\label{eq:main-equivariance-modulus}
\left\lVert (\phi',\alpha,\beta) \right\rVert_{F,K} \leq \left\lVert (\psi,\alpha,\delta) \right\rVert_{F,K} + \epsilon.
\end{equation}
\end{theorem}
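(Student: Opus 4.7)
The plan is to reduce the statement to the Calkin algebra case already handled by Lemma \ref{lem:lift-calkin}, using the Busby invariant machinery of Section \ref{sec:busby}. Let $\mu := \mu(\eta) : D \to \mathcal{Q}(I)$ be the Busby invariant of the given extension; Lemma \ref{lem:busby-equivariant} and Corollary \ref{cor:busby-g-admissible} show that $\mu$ is $C_0(G^{(0)})$-linear, $G$-equivariant and $G$-admissible. Proposition \ref{prop:c0x-busby} and Corollary \ref{cor:equivariant-busby} then let me identify $B$ with the pullback $\{(m,d) \in \mathcal{M}(I) \oplus D : q_I(m) = \mu(d)\}$ via the injective (hence isometric) $*$-homomorphism $b \mapsto (\bar\mu(\eta)(b), p(b))$; in particular the norm on $B$ is the maximum of the two component norms, and by the uniqueness in Corollary \ref{cor:equivariant-busby} the $G$-action $\beta$ must be componentwise in every fiber, namely $\beta_g(m,d) = (\bar\gamma_g m, \delta_g d)$.

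With this picture in hand, I first lift on the multiplier side. Set $\bar\phi := \bar\mu(\eta) \circ \phi : A \to \mathcal{M}(I)$; this is $C_0(G^{(0)})$-linear, completely positive and contractive, and satisfies $q_I \circ \bar\phi = \mu \circ \psi$. Applying Lemma \ref{lem:lift-calkin} to $(\bar\phi, \mu \circ \psi)$ with data $F$, $K$, $\epsilon$ produces a $C_0(G^{(0)})$-linear, completely positive and contractive map $\phi'' : A \to \mathcal{M}(I)$ with $q_I \circ \phi'' = \mu \circ \psi$ and
\begin{equation*}
\left\lVert (\phi'', \alpha, \bar\gamma) \right\rVert_{F,K} \leq \left\lVert (\mu \circ \psi, \alpha, \tilde\gamma) \right\rVert_{F,K} + \epsilon.
\end{equation*}
Because $\mu$ is $G$-equivariant (so $\tilde\gamma_g \mu_{s(g)} = \mu_{r(g)} \delta_g$) and every fiber $\mu_x$ is contractive, the right-hand side is at most $\left\lVert (\psi, \alpha, \delta) \right\rVert_{F,K} + \epsilon$.

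To conclude, I glue $\phi''$ and $\psi$ into $\phi' : A \to B$ by $\phi'(a) := (\phi''(a), \psi(a))$; this lands in $B$ precisely because $q_I(\phi''(a)) = \mu(\psi(a))$. Componentwise inheritance gives $C_0(G^{(0)})$-linearity, complete positivity and contractivity of $\phi'$, and $p \circ \phi' = \psi$ by construction. For the equivariance modulus, the componentwise form of $\beta_g$ together with the sup-norm identification of $B(r(g))$ inside $\mathcal{M}(I(r(g))) \oplus D(r(g))$ will express the defect as a maximum of two terms: the $\mathcal{M}(I(r(g)))$-defect, bounded by $\left\lVert (\psi,\alpha,\delta) \right\rVert_{F,K} + \epsilon$ from the previous paragraph, and the $D(r(g))$-defect, bounded by $\left\lVert (\psi,\alpha,\delta) \right\rVert_{F,K}$ directly, which yields \eqref{eq:main-equivariance-modulus}. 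The only delicate step is ensuring the pullback identification of $B$ is compatible with every relevant structure—norm, fibers, and $G$-action—but this has been prepared in Section \ref{sec:busby}; with that in hand, the theorem essentially reduces to Lemma \ref{lem:lift-calkin}.
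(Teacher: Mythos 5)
Your proposal is correct and follows essentially the same route as the paper: pass to the Busby invariant, identify $B$ with the pullback $\{(m,d)\in\mathcal{M}(I)\oplus D : q_I(m)=\mu(\eta)(d)\}$ via the $G$-equivariant isomorphism $b\mapsto(\bar\mu(\eta)(b),p(b))$ from Corollary \ref{cor:equivariant-busby}, apply Lemma \ref{lem:lift-calkin} to $\bar\mu(\eta)\circ\phi$ and $\mu(\eta)\circ\psi$, and glue the resulting lift with $\psi$, controlling the modulus through the $G$-equivariance of $\mu(\eta)$ and contractivity of its fibers. Your explicit remarks on the componentwise action and the max-norm identification simply spell out what the paper compresses into ``$\Theta$ is $G$-equivariant,'' so there is no substantive difference.
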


\begin{proof}
We begin by considering the $G$-$C^*$-extension 
\begin{equation}\tag{$\eta$}
0 \rightarrow I \rightarrow B \rightarrow D \rightarrow 0,
\end{equation}and the associated $*$-homomorphims 
\begin{equation}
\bar \mu(\eta) : B \rightarrow \mathcal{M}(I), \quad \mu(\eta) : D \rightarrow \mathcal{Q}(I).  
\end{equation}
Setting 
\begin{equation}
E=\{(m,c) \in \mathcal{M}(I) \oplus D : q_I(m)=\mu(\eta)(c)\},
\end{equation}
and
\begin{equation}
\Theta : B \to E, \quad \Theta(b)=(\bar \mu(\eta)(b),p(b)) \text{ for all } b \in B,
\end{equation}by Corollary \ref{cor:equivariant-busby}, we see that $E$ can be made into a $G$-$C^*$-algebra and $\Theta$ is a $G$-equivariant isomorphism.
Let
\begin{equation}\label{eq:descent}
\dot{\phi}=\bar \mu(\eta) \circ \phi : A \rightarrow \mathcal{M}(I), \text{ and } \dot{\psi}=\mu(\eta) \circ \psi : A \rightarrow \mathcal{Q}(I),
\end{equation}so that
\begin{equation}\label{eq:descent-lift}
q_I \circ \dot{\phi}=q_I \circ \bar \mu(\eta) \circ \phi=\mu(\eta) \circ p \circ \phi=\mu(\eta) \circ \psi=\dot{\psi},  
\end{equation}
i.e., we are in the setting of Lemma \ref{lem:lift-calkin}. Therefore we find
\begin{equation}
\dot{\phi}' : A \rightarrow \mathcal{M}(I) \text{ such that } q_I \circ \dot{\phi}'=\dot{\psi}  
\end{equation}
and 
\begin{equation}\label{eq:descent-control}
\left\lVert (\dot \phi',\alpha,\bar \gamma) \right\rVert_{F,K} \leq \left\lVert (\dot \psi,\alpha,\tilde \gamma) \right\rVert_{F,K} + \epsilon.    
\end{equation}
This implies that for all $a \in A$, $q_I(\dot \phi'(a))=\dot \psi(a)=\mu(\eta)(\psi(a))$, i.e., $(\dot \phi'(a), \psi(a)) \in E$. We can therefore define 
\begin{equation}
\phi' : A \to B \quad \phi'(a)=\Theta^{-1}(\dot \phi'(a),\psi(a)), \quad a \in A,
\end{equation} 
which is then clearly a $C_0(G^{(0)})$-linear, completely positive and contractive map such that $p \circ \phi'=\psi$. To end the proof, we need the control on the equivariance modulus of $\phi'$. To that end, we fix $g \in G$ and $a \in A$ and observe that 
\begin{equation}
\begin{aligned}
\tilde \gamma_g(\dot \psi_{s(g)}(a(s(g))))=&{} \tilde \gamma_g(\mu(\eta)_{s(g)}(\psi_{s(g)}(a(s(g)))))\\ 
=&{} \tilde \gamma_g(q_{I_{s(g)}}(\bar \mu(\eta)_{s(g)}(\phi_{s(g)}(a(s(g))))))\\
=&{} q_{I_{r(g)}}(\bar \gamma_g(\bar \mu(\eta)_{s(g)}(\phi_{s(g)}(a(s(g))))))\\
=&{} q_{I_{r(g)}}(\bar \mu(\eta)_{r(g)}(\beta_g(\phi_{s(g)}(a(s(g))))))\\
=&{} \mu(\eta)_{r(g)}(p_{r(g)}(\beta_g(\phi_{s(g)}(a(s(g))))))\\
=&{} \mu(\eta)_{r(g)}(\delta_g((p_{s(g)}(\phi_{s(g)}(a(s(g)))))))\\
=&{} \mu(\eta)_{r(g)}(\delta_g(\psi_{s(g)}(a(s(g))))),
\end{aligned}
\end{equation}
where the first equality is by definition of $\dot \psi$, \eqref{eq:descent}; the second equality is by \eqref{eq:descent-lift}; the third equality is by \eqref{eq:alphabar-alphatilde}; the fourth equality is by Lemma \ref{lem:mubar-equivariant}; the fifth equality is by definition of $\mu(\eta)$; the sixth equality is because $p$ is $G$-equivariant and the final equality is by hypothesis. This together with \eqref{eq:descent-control} imply
\begin{equation}
\begin{aligned}
\left\lVert (\dot \phi',\alpha,\bar \gamma) \right\rVert_{F,K} \leq \left\lVert (\psi,\alpha,\delta) \right\rVert_{F,K} + \epsilon.
\end{aligned}  
\end{equation}which, combined with the fact that $\Theta$ is $G$-equivariant, in turn yield \eqref{eq:main-equivariance-modulus}, as desired. This completes the proof.
\end{proof}

\begin{remark}\label{rem:existence-lift}
We remark that the existence of $\phi$ in the theorem is a nontrivial question. If $A$ is $C_0(X)$-nuclear and $D$ is separable, then Proposition \ref{prop:c0x-choi-effros} guarantees the existence of $\phi$. We expound on this issue in more detail in the Appendix \ref{sec:appendix} below.
\end{remark}

Since $A$ is separable and $G$ is second countable, we obtain the following theorem as a corollary to Theorem \ref{theo:lift}.

\begin{theorem}
Let $G$ be a second countable groupoid and $(A,\alpha)$ be a separable, $G$-$C^*$-algebra. Let  
\[
0 \rightarrow (I,\gamma) \rightarrow (B,\beta) \xrightarrow{p} (D,\delta) \rightarrow 0
\]be a $G$-$C^*$-extension with $B$ separable. Let $\phi : A \rightarrow B$ and $\psi : A \rightarrow D$ be two $C_0(G^{(0)})$-linear, completely positive and contractive maps with $\psi$ equivariant and such that $p \circ \phi=\psi$. Then there exists a sequence of $C_0(G^{(0)})$-linear, completely positive and contractive maps $\phi'_n : A \rightarrow B$, $n \in \mathbb{N}$, such that $p \circ \phi'_n=\psi$ and 
\begin{equation}
\lim_{n \rightarrow \infty} \sup_{g \in K}\left\lVert \beta_g ((\phi'_n)_{s(g)}(a(s(g))-(\phi'_n)_{r(g)}(\alpha_g(a(s(g)))) \right\rVert=0,
\end{equation}for all $a \in A$ and for all compact subsets $K \subseteq G$.
\end{theorem}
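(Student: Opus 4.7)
The plan is to deduce this statement from Theorem \ref{theo:lift} by iterating the quantitative lifting result over data $(F_n,K_n,\epsilon_n)$ that exhaust $A$ and $G$ in the limit. Since $A$ is separable, I would fix a countable dense subset $\{a_k\}_{k \in \mathbb{N}}$ of $A$ and set $F_n=\{a_1,\dots,a_n\}$. Since $G$ is a locally compact, Hausdorff, second countable space, it is $\sigma$-compact, so I can fix an increasing sequence $(K_n)_{n \in \mathbb{N}}$ of compact subsets with $\bigcup_n K_n = G$.

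The crucial observation is that the $G$-equivariance of $\psi$ forces $\|(\psi,\alpha,\delta)\|_{F_n,K_n}=0$ for every $n$. Applying Theorem \ref{theo:lift} to the triple $(\phi,\psi,\epsilon=1/n)$ with finite subset $F_n$ and compact subset $K_n$, I obtain a $C_0(G^{(0)})$-linear, completely positive and contractive map $\phi'_n : A \to B$ with $p \circ \phi'_n=\psi$ and
\begin{equation*}
\|(\phi'_n,\alpha,\beta)\|_{F_n,K_n} \leq \|(\psi,\alpha,\delta)\|_{F_n,K_n}+\tfrac{1}{n}=\tfrac{1}{n}.
\end{equation*}

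To conclude, I would verify the convergence by a standard approximation argument. Fix $a \in A$ and a compact $K \subseteq G$, and let $\epsilon>0$. Choose $k$ with $\|a-a_k\|<\epsilon/3$ and then $n_0 \geq k$ large enough so that $K \subseteq K_n$ and $1/n<\epsilon/3$ whenever $n \geq n_0$. Because each fiber map $(\phi'_n)_x$ is contractive, and the fiberwise identifications $\alpha_g$ and $\beta_g$ are $*$-isomorphisms, the triangle inequality combined with $\|a(s(g))-a_k(s(g))\| \leq \|a-a_k\|$ gives, for all such $n$ and all $g \in K$,
\begin{equation*}
\|\beta_g((\phi'_n)_{s(g)}(a(s(g))))-(\phi'_n)_{r(g)}(\alpha_g(a(s(g))))\| \leq 2\|a-a_k\|+\|(\phi'_n,\alpha,\beta)\|_{\{a_k\},K_n} < \epsilon.
\end{equation*}
Taking the supremum over $g \in K$ and letting $n \to \infty$ yields the desired limit.

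There is no real obstacle here: Theorem \ref{theo:lift} does all the work, and this corollary is essentially a bookkeeping exercise that repackages the quantitative estimate as a qualitative asymptotic statement using separability of $A$ and $\sigma$-compactness of $G$. The only point that deserves a little care is the contractivity argument in the last display, which requires knowing that for any $C_0(G^{(0)})$-linear c.p.c.\ map the fiber maps are contractive and that $\alpha_g,\beta_g$ are isometric — both of which follow from the conventions established in Section \ref{sec:groupoids-actions}.
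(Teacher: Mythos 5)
Your proposal is correct and takes essentially the same approach the paper intends: the paper offers no separate argument, stating the result as an immediate corollary of Theorem \ref{theo:lift} via separability of $A$ and second countability (hence $\sigma$-compactness) of $G$, which is exactly your iteration over $(F_n,K_n,1/n)$ combined with the observation that $G$-equivariance of $\psi$ forces $\left\lVert (\psi,\alpha,\delta) \right\rVert_{F_n,K_n}=0$. One small point of care: choose the compact sets as a proper exhaustion, i.e.\ $K_n$ contained in the interior of $K_{n+1}$ (possible since $G$ is locally compact, Hausdorff and second countable), because an arbitrary increasing sequence of compacts with union $G$ need not eventually contain a given compact $K$.
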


We thank Valerio Proietti for pointing out that the above theorem is similar to \cite{higson-kasparov:e-theory}*{Theorem 8.1}; however, the precise connection is yet to be established.

\appendix
\section{}\label{sec:appendix} The purpose of this appendix is to clarify some of the lifting results related to this article from \cites{kasparov-skandalis:group-actions-buildings,bauval:nuclear-rkk,gabe:lifting} and to address the issue mentioned in Remark \ref{rem:existence-lift} above. We begin with the latter.

\begin{definition}[{\cite{kasparov-skandalis:group-actions-buildings}*{Section 6.2}}, \cite{bauval:nuclear-rkk}*{Definition 2.1}]
Let $A$ and $B$ be two $C_0(X)$-algebras.
\begin{itemize}
  \item A completely positive, $C_0(X)$-linear map $\phi : A \to B$ is said to be $C_0(X)$-\emph{factorable} if it admits a factorization $\phi=\sigma \circ \tau$, where $n \in \mathbb{N}$, $\tau : A \to M_n(\mathbb{C}) \otimes C_0(X)$ and $\sigma : M_n(\mathbb{C}) \otimes C_0(X) \to B$ are completely positive, $C_0(X)$-linear maps.
  \item $\phi$ is said to be strongly $C_0(X)$-factorable if moreover $\sigma$ and $\tau$ can be chosen to be contractive.
\end{itemize}
\end{definition}

Let $\mathrm{fact}^1(X;A,B)$ denote the set of all $C_0(X)$-factorable and contractive maps; further, let $\mathrm{fact}^s(X;A,B)$ denote the subset of all strongly $C_0(X)$-factorable, completely positive maps. By \cite{bauval:nuclear-rkk}*{Proposition 2.3}, $\mathrm{fact}^s(X;A,B)$ is dense in $\mathrm{fact}^1(X;A,B)$ in the point-norm topology. When $X=\{pt\}$, the two coincide. Let us also write $\mathrm{cp}^1(X;A,B)$ for the set of all $C_0(X)$-linear, completely positive and contractive maps.

\begin{definition}
Let $A$ and $B$ be two $C_0(X)$-algebras. 
\begin{itemize}
  \item $\phi \in \mathrm{cp}^1(X;A,B)$ is said to be $C_0(X)$-\emph{nuclear} if it is in the closure of $\mathrm{fact}^1(X;A,B)$ in the point-norm topology.
  \item $A$ is said to be $C_0(X)$-\emph{nuclear} if the identity map $\mathrm{id}_A : A \rightarrow A$ is $C_0(X)$-nuclear.
\end{itemize}
\end{definition}

We note that $A$ being $C_0(X)$-nuclear in particular implies that $A$ is nuclear in the ordinary sense. Thus for each $x \in X$, $A(x)$ is nuclear as well. The following (restatement of a) striking theorem of Bauval provides a converse.

\begin{theorem}[{\cite{bauval:nuclear-rkk}*{Theorem 7.2}}]
Let $A$ be a $C_0(X)$-algebra. Then the following are equivalent.
\begin{itemize}
  \item $A$ is $C_0(X)$-nuclear.
  \item $A$ is nuclear and is a continuous field of $C^*$-algebras.
\end{itemize}  
\end{theorem}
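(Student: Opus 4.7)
The forward direction is the easier one. For nuclearity, each $\phi \in \mathrm{fact}^1(X;A,A)$ factors, upon forgetting $C_0(X)$-linearity, through the nuclear $C^*$-algebra $M_n(C_0(X))$, so $\phi$ is a nuclear ccp map; taking point-norm limits of such maps yields nuclearity of $\mathrm{id}_A$, hence of $A$. For continuity, fix $a \in A$ and $\epsilon > 0$, and choose $\phi = \sigma \circ \tau \in \mathrm{fact}^1(X;A,A)$ with $\|\phi(a) - a\| < \epsilon$. Since $M_n(C_0(X)) \cong C_0(X, M_n)$ is tautologically a continuous field, $x \mapsto \|\tau(a)(x)\|$ is continuous; post-composition with the $C_0(X)$-linear ccp map $\sigma$ shows $x \mapsto \|\phi(a)(x)\|$ is continuous as well. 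Because $\|\phi(a)(x) - a(x)\| < \epsilon$ for every $x$, the upper semi-continuous function $x \mapsto \|a(x)\|$ is a uniform $\epsilon$-approximation of a continuous function, hence continuous.

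For the converse, suppose $A$ is nuclear and a continuous field. Given a finite $F \subset A$ and $\epsilon > 0$, the task is to produce $\phi \in \mathrm{fact}^1(X;A,A)$ with $\|\phi(a) - a\| < \epsilon$ for each $a \in F$; my plan is local-to-global. Since nuclearity passes to quotients, $A(x)$ is nuclear for every $x \in X$. Fix $x_0 \in X$ and apply the ordinary Choi--Effros theorem to obtain $n = n_{x_0}$ and ccp maps $\tau_0 : A(x_0) \to M_n$, $\sigma_0 : M_n \to A(x_0)$ with $\|\sigma_0 \tau_0(a(x_0)) - a(x_0)\| < \epsilon/3$ for all $a \in F$. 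I would then upgrade these local data to a $C_0(U)$-linear ccp factorization on a neighborhood $U$ of $x_0$ as follows. For $\tau_0$: compose with $\pi_{x_0}$ and apply Arveson's extension theorem (values in $M_n$) to obtain a ccp map $A \to M_n$, and view it fiberwise as a $C_0(X)$-linear map $A \to M_n \otimes C_0(X)$ with fiber $\tau_0$ at $x_0$. For $\sigma_0$: write a Kraus-type decomposition $\sigma_0(\cdot) = \sum_i v_i^* \cdot v_i$ with $v_i \in A(x_0)$, lift each $v_i$ to a continuous section $\hat{v}_i \in A$ (possible precisely because $\A$ is a continuous field), and set $\sigma^U_x(m) = \sum_i \hat{v}_i(x)^* m\, \hat{v}_i(x)$. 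A normalization on a possibly smaller neighborhood enforces contractivity. Upper semi-continuity of $x \mapsto \|\sigma^U_x \tau^U_x(a(x)) - a(x)\|$ then allows shrinking $U$ so that this quantity is at most $\epsilon/2$ on $U$ for every $a \in F$.

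To globalize, use that each $a \in F$ vanishes at infinity to find a compact $K \subset X$ with $\|a(x)\| < \epsilon/4$ for $x \notin K$ and $a \in F$; cover $K$ by finitely many neighborhoods $U_i = U_{x_i}$ produced above, and take a finite partition of unity $\{\chi_i\}$ subordinate to $\{U_i\}$. Set $\tau = \bigoplus_i \chi_i^{1/2} \tau^{U_i} : A \to \bigoplus_i M_{n_i}(C_0(X))$ and $\sigma = \bigoplus_i \chi_i^{1/2} \sigma^{U_i}$ in the opposite direction; using $\sum_i \chi_i = 1$ on $K$ together with the $C_0(X)$-linearity of $\sigma^{U_i}$, the composition evaluates as $\sigma\tau(a)(x) = \sum_i \chi_i(x)\, \sigma^{U_i}_x \tau^{U_i}_x(a(x))$. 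The pointwise bound at $x \in K$ gives $\|\sigma\tau(a)(x) - a(x)\| < \epsilon/2$, while outside $K$ both $a(x)$ and $\sigma\tau(a)(x)$ are small. The resulting $\phi = \sigma\tau$, viewed as factoring through a single $M_N(C_0(X))$ with $N = \sum_i n_i$, lies in $\mathrm{fact}^1(X;A,A)$ and approximates $\mathrm{id}_A$ within $\epsilon$ on $F$. I expect the main obstacle to be the construction of the $C_0(U)$-linear lift $\sigma^U$ of $\sigma_0$: lifting a ccp map out of $M_n$ from a single fiber to a continuous family on a neighborhood is a quantitative semi-projectivity problem, and it is precisely here that the continuity of the field is indispensable --- without it, Kraus operators at $x_0$ lift only to upper semi-continuous sections, and the resulting $\sigma^U$ fails to be a $C_0(U)$-linear contraction.
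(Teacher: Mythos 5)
Your forward direction contains a fixable slip, but the converse direction contains the fatal gap, so let me start there. The step ``view $\tau_0 \circ \pi_{x_0}$ fiberwise as a $C_0(X)$-linear map $A \to M_n \otimes C_0(X)$ with fiber $\tau_0$ at $x_0$'' does not produce a $C_0(X)$-linear map: the only candidate your construction supplies is the constant extension $a \mapsto (x \mapsto \tau_0(a(x_0)))$, which satisfies $\hat\tau(fa) = f(x_0)\hat\tau(a)$ rather than $\hat\tau(fa) = f\,\hat\tau(a)$, and consequently has no well-defined fibers $\tau^{U}_x$ at points $x \neq x_0$ --- yet your globalization formula $\sigma\tau(a)(x) = \sum_i \chi_i(x)\,\sigma^{U_i}_x\tau^{U_i}_x(a(x))$ relies on exactly those fibers. (The appeal to Arveson is also vacuous, since $\tau_0 \circ \pi_{x_0}$ is already defined on all of $A$.) Constructing a genuinely $C_0(U)$-linear, completely positive contractive $\tau^U$ with prescribed fiber $\tau_0$ at $x_0$ amounts to a continuous selection problem for the field of sets of completely positive maps $A(x) \to M_n$, and this --- not the $\sigma$ side --- is where continuity of the field (and separability) enters irreplaceably; Bauval handles it via Blanchard's continuous-fields-of-states technique, ultimately Michael's selection theorem, the very result the introduction of this paper invokes as the commutative ancestor of these lifting theorems. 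You have in fact misdiagnosed the difficulty: your Kraus-lifting construction of $\sigma^U$ works verbatim for any upper semi-continuous field, because every element of $A(x_0)$ lifts to a section (surjectivity of $\pi_{x_0}$ needs no continuity), and upper semi-continuity already bounds $\bigl\lVert \sum_j \hat v_j(x)^*\hat v_j(x) \bigr\rVert$ by $1+\delta$ near $x_0$, which is all your normalization requires. Here is a decisive internal check: your argument nowhere else uses continuity of the field, so if the $\tau$-step were as automatic as written, you would have proved that every nuclear $C_0(X)$-algebra is $C_0(X)$-nuclear, contradicting your own forward direction applied to any nuclear upper semi-continuous field that is not continuous.

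In the forward direction, the inference ``post-composition with the $C_0(X)$-linear ccp map $\sigma$ shows $x \mapsto \lVert\phi(a)(x)\rVert$ is continuous'' is false: $C_0(X)$-linear completely positive contractions do not preserve continuity of norm functions (take $\sigma : C_0(X) \to A$, $f \mapsto fb$, with $b \geq 0$ a section whose norm function is merely upper semi-continuous; then $\lVert\sigma(f)(x)\rVert = \lvert f(x)\rvert\,\lVert b(x)\rVert$). Since continuity of $x \mapsto \lVert\phi(a)(x)\rVert$ is essentially the statement being proved, this step is circular as well as invalid. The repair is easy, using $\tau$ instead of $\phi$: after replacing $\phi$ by a strongly $C_0(X)$-factorable approximant (the density of $\mathrm{fact}^s$ in $\mathrm{fact}^1$ is recorded in the appendix), the fiber maps $\tau_x$ and $\sigma_x$ are contractive, so $\lVert a(x)\rVert - \epsilon \leq \lVert\phi(a)(x)\rVert = \lVert\sigma_x(\tau(a)(x))\rVert \leq \lVert\tau(a)(x)\rVert \leq \lVert a(x)\rVert$ for every $x$, and $x \mapsto \lVert\tau(a)(x)\rVert$ is honestly continuous because $M_n(\C)\otimes C_0(X) \cong C_0(X, M_n(\C))$ is a continuous field; hence $\lVert a(\cdot)\rVert$ is a uniform limit of continuous functions. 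Your nuclearity argument in this direction is correct. For calibration: the paper gives no proof of this theorem --- it is quoted from Bauval --- and the essential content of the cited argument, namely the selection-theoretic construction of local $C_0(X)$-linear factorizations through a prescribed fiber map, is precisely what your sketch assumes away.
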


In \cite{kasparov-skandalis:group-actions-buildings}, the authors obtain the following analogue of the Choi-Effros lifting theorem, more generally for $C_0(X)$-nuclear maps. 

\begin{proposition}[{\cite{kasparov-skandalis:group-actions-buildings}*{Section 6.2}}] \label{prop:c0x-choi-effros} 
Let $A$ be a separable, $C_0(X)$-nuclear $C^*$-algebra; let
\[
0 \rightarrow I \rightarrow B \xrightarrow{p} D \rightarrow 0  
\]be a $C_0(X)$-extension with $B$ separable and let $\psi \in \mathrm{cp}^1(X;A,D)$. Then there is a $\phi \in \mathrm{cp}^1(X;A,B)$ such that $p \circ \phi=\psi$.
\end{proposition}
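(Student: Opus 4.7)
The plan is a three-step argument paralleling the standard proof of Choi-Effros, with careful tracking of $C_0(X)$-linearity throughout. First, I would use $C_0(X)$-nuclearity of $A$ together with the density (in point-norm) of $\mathrm{fact}^s(X;A,A)$ inside $\mathrm{fact}^1(X;A,A)$ to produce sequences of $C_0(X)$-linear cpc maps
\[
\tau_n : A \to M_{k_n}(\mathbb{C}) \otimes C_0(X), \qquad \sigma_n : M_{k_n}(\mathbb{C}) \otimes C_0(X) \to A,
\]
with $\sigma_n \circ \tau_n \to \mathrm{id}_A$ in the point-norm topology. Composing with $\psi$ yields $C_0(X)$-linear cpc maps $\psi_n := \psi \circ \sigma_n : M_{k_n}(\mathbb{C}) \otimes C_0(X) \to D$.

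Second, I would lift each $\psi_n$ to a $C_0(X)$-linear cpc map $\rho_n : M_{k_n}(\mathbb{C}) \otimes C_0(X) \to B$ with $p \circ \rho_n = \psi_n$. The key structural observation is a natural bijection between $C_0(X)$-linear cpc maps $\tau : M_n(\mathbb{C}) \otimes C_0(X) \to D$ and positive contractions $[m_{ij}] \in M_n(\mathcal{M}(D))$, given by $\tau(e_{ij} \otimes f) = \theta_D(f)\, m_{ij}$ and recovered from $\tau$ as the strict limit of $\tau(e_{ij} \otimes h_\lambda)$ along an approximate unit $(h_\lambda)$ of $C_0(X)$. Since $B$ is separable, the ideal $I$ is $\sigma$-unital, so the canonical extension $\mathcal{M}(B) \to \mathcal{M}(D)$ is surjective; amplifying to $M_{k_n}$ and using that positive contractions lift to positive contractions of the same norm, I obtain the desired $\rho_n$, and set $\phi_n := \rho_n \circ \tau_n$, a $C_0(X)$-linear cpc map $A \to B$ satisfying $p \circ \phi_n \to \psi$ in point-norm.

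Third, to upgrade this pointwise approximate lift to an honest lift, I would apply Arveson's averaging argument. I choose a quasi-central approximate unit $(u_k)$ of $I$ (quasi-central for a countable subset norming $\bigcup_n \phi_n(A)$), set $\Delta_k := (u_k - u_{k-1})^{1/2}$, pass to a suitable subsequence $(n_k)$, and define $\phi$ as a strictly convergent sum of terms $\Delta_k \phi_{n_k}(\cdot) \Delta_k$, adjusted by correction terms inside $I$ so as to enforce $p \circ \phi = \psi$; the standard Arveson/Choi-Effros estimates then ensure cpc-ness. Because $\theta_B(C_0(X)) \subset \mathcal{Z}\mathcal{M}(B)$, each $\Delta_k$ commutes with every $\theta_B(f)$, so the averaged map $\phi$ is automatically $C_0(X)$-linear, as are the correction terms.

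I expect the main obstacle to be the second step: setting up the positive-matrix correspondence rigorously and verifying that the lifted $\rho_n$ genuinely takes values in $B$ (not merely in $\mathcal{M}(B)$) with norm and $C_0(X)$-linearity properly controlled. By contrast, the averaging step is essentially the classical Arveson argument, with $C_0(X)$-linear compatibility preserved automatically by the centrality of $\theta_B(C_0(X))$ in $\mathcal{M}(B)$.
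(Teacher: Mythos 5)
Your overall strategy is the same as the paper's: the paper also runs Arveson's scheme, proving first that the set $\mathrm{lift}(X;A,D)$ of liftable maps is point-norm closed (Lemma \ref{lem:liftable-closed}, quoted verbatim from Arveson, with $C_0(X)$-linearity surviving precisely because $\theta_B(C_0(X))\subset\mathcal{ZM}(B)$, as you observe), then that maps out of $M_n(\mathbb{C})\otimes C_0(X)$ lift (Lemma \ref{lem:matrix-lifts}), and concluding via $C_0(X)$-nuclearity. However, two of your steps have genuine gaps. In step 2, lifting the Choi matrix through $M_{k_n}(\mathcal{M}(B))\to M_{k_n}(\mathcal{M}(D))$ produces $\rho_n$ with values in $\mathcal{M}(B)$, and nothing in your argument returns it to $B$: for $\tilde m\in\mathcal{M}(B)$ and $f\in C_0(X)$, the product $\theta_B(f)\tilde m$ is again only a multiplier. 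You flag this yourself as ``the main obstacle'', but the proposal contains no idea that closes it. The missing ingredient is the subalgebra $B_b=\{m\in\mathcal{M}(B): \theta_B(f)m\in B \text{ for all } f\in C_0(X)\}$ (denoted $\mathcal{M}^{\#}(B)$ in Fieux): the ``Choi matrix'' of a $C_0(X)$-linear cpc map on $M_n(\mathbb{C})\otimes C_0(X)$ has entries in $D_b$, not merely in $\mathcal{M}(D)$, exactly because $\psi(e_{ij}\otimes f)\in D$; the paper's Lemma \ref{lem:matrix-lifts} uses that $\bar p$ maps $B_b$ onto $D_b$ and lifts the cpc map $\bar\psi:M_n(\mathbb{C})\to D_b$ to a cpc map $\bar\phi:M_n(\mathbb{C})\to B_b$, whence $\phi(a\otimes f)=f\bar\phi(a)\in B$. (Your normalization is also off: a cpc map corresponds to a positive Choi matrix with $\sum_i m_{ii}\le 1$, not to a positive contraction --- the Choi matrix of $\mathrm{id}_{M_n}$ has norm $n$ --- so ``positive contractions lift to positive contractions of the same norm'' is not the statement you need; lifting the map from $M_n(\mathbb{C})$, as the paper does, handles the normalization automatically.)

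In step 3 the averaging formula is the wrong mechanism. If $(u_k)$ is an approximate unit of $I$, then $\Delta_k\in I$, so $p$ annihilates every term $\Delta_k\phi_{n_k}(\cdot)\Delta_k$ and every ``correction term inside $I$'': no such sum can have $p\circ\phi=\psi$. Worse, since $\sum_k\Delta_k^2\to 1$ strictly only over $I$, the series converges in the strict topology of $\mathcal{M}(I)$, where $p$ is not even defined; returning from $\mathcal{M}(I)$ to $B$ requires the Busby pullback, which is the machinery of the paper's Lemma \ref{lem:lift-calkin} and Theorem \ref{theo:lift}, not of this proposition. The argument that actually proves Lemma \ref{lem:liftable-closed} (Arveson's Theorem 6) is a successive two-block correction: with exact cpc lifts $L_n$ of liftable approximants $\psi_n\to\psi$ converging fast on increasing finite sets, one sets $\phi_{n+1}(a)=u_n^{1/2}\phi_n(a)u_n^{1/2}+(1-u_n)^{1/2}L_{n+1}(a)(1-u_n)^{1/2}$, with $u_n$ quasi-central and chosen late enough that $\|c(1-u_n)\|\le\|q_I(c)\|+\varepsilon_n$ on the relevant finite set; each $\phi_{n+1}$ is still an exact lift (the $I$-valued summand dies under $p$, which is the point), the sequence is point-norm Cauchy, and the limit is the desired lift. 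Your centrality observation does correctly show that $C_0(X)$-linearity passes through this scheme, which is exactly why the paper can cite Arveson's proof as ``identical''.
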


We note that this proposition ensures the existence of $\phi$ as in Remark \ref{rem:existence-lift}. Now, although the proof is a slight adaptation of Arveson's proof in \cite{arveson:extensions}, we sketch it nevertheless, for the sake of completeness. To that end, we fix a separable, $C_0(X)$-nuclear $C^*$-algebra $A$ and a $C_0(X)$-extension
\[
0 \rightarrow I \rightarrow B \xrightarrow{p} D \rightarrow 0,
\]with $B$ separable. We write
\[
\mathrm{lift}(X;A,D)=\{\psi \in \mathrm{cp}^1(X;A,D) : \text{there is a } \phi \in \mathrm{cp}^1(X;A,B) \text{ such that } p \circ \phi=\psi \}.  
\]Then the following lemma is the $C_0(X)$-version of \cite{arveson:extensions}*{Theorem 6} whose proof is identical and hence omitted.

\begin{lemma}\label{lem:liftable-closed}
$\mathrm{lift}(X;A,D)$ is closed in $\mathrm{cp}^1(X;A,D)$ in the point-norm topology.
\end{lemma}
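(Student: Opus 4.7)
The plan is to follow Arveson's proof of \cite{arveson:extensions}*{Theorem 6}, adapting it so that $C_0(X)$-linearity is preserved throughout. Suppose $(\psi_n)_{n \in \mathbb{N}} \subseteq \mathrm{lift}(X;A,D)$ converges in the point-norm topology to $\psi \in \mathrm{cp}^1(X;A,D)$, and pick lifts $\phi_n \in \mathrm{cp}^1(X;A,B)$ with $p \circ \phi_n = \psi_n$. First, I would fix an increasing sequence of finite subsets $F_n \subseteq A$ with dense union in the unit ball (using separability of $A$), and pass to a subsequence so that $\sup_{a \in F_n}\|\psi_{k+1}(a)-\psi_k(a)\|$ is summably small for $k \geq n$.

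Next, since $I := \ker p$ is a separable $C_0(X)$-algebra, I would apply Lemma \ref{lemma:quasi-central} to produce a quasi-central approximate unit $(u_n)_{n \in \mathbb{N}} \subseteq I$ with $0 \leq u_n \leq 1$ and $u_{n+1}u_n = u_n$, such that $\|u_n \phi_k(a) - \phi_k(a) u_n\|$ is controlled (say at most $2^{-n}$) for all $k \leq n$ and $a \in F_n$. The heart of the argument is then Arveson's averaging/telescoping construction, which produces a completely positive and contractive lift $\phi : A \to B$ from the square roots $(u_{n+1}-u_n)^{1/2}$ and the differences $\phi_{n+1} - \phi_n$. The fast decay from the first step forces norm convergence in $B$, quasi-centrality of $(u_n)$ delivers complete positivity and contractivity, and a telescoping cancellation together with $p(u_n)=0$ yields $p \circ \phi = \psi$.

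The only genuinely new point compared with Arveson is verifying that $\phi$ is $C_0(X)$-linear, which is the step I would flag as the main---though mild---obstacle. It comes essentially for free: $\theta_B(C_0(X))$ lies in $\mathcal{Z}\mathcal{M}(B)$, so each $u_n \in I$, and hence each square root $(u_{n+1}-u_n)^{1/2}$, commutes with $\theta_B(C_0(X))$; each $\phi_n$ is $C_0(X)$-linear by hypothesis; and every remaining operation in Arveson's recipe---left and right multiplication by these square roots, pointwise sums, and norm limits---respects $C_0(X)$-linearity. Consequently $\phi$ inherits the $C_0(X)$-linearity of the $\phi_n$, and $\psi = p \circ \phi \in \mathrm{lift}(X;A,D)$, as desired.
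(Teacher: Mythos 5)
Your overall strategy is the same as the paper's, which omits the proof entirely on the grounds that it is identical to Arveson's proof of \cite{arveson:extensions}*{Theorem 6}; and your closing paragraph correctly isolates the only genuinely new point, namely that centrality of $\theta_B(C_0(X))$ in $\mathcal{M}(B)$, together with stability of $C_0(X)$-linearity under sums and point-norm limits, makes the whole construction $C_0(X)$-linear. The gap is in your description of the ``heart of the argument,'' which as written does not cohere. The weights you name, $\Delta_n=(u_{n+1}-u_n)^{1/2}$, lie in $I=\ker p$, so every compression $\Delta_n x \Delta_n$ lies in $I$ and is annihilated by $p$; hence no norm-convergent series built from such terms and the differences $\phi_{n+1}-\phi_n$ can have $p$-image $\psi$: your ``telescoping cancellation together with $p(u_n)=0$'' would give $p\circ\phi=\psi_1$ or $0$, not $\psi$. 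The terms that survive $p$ in Arveson's scheme are compressions by $(1-u)^{1/2}$, whose image in the quotient is $1$: one defines recursively $\tilde\phi_1=\phi_1$ and $\tilde\phi_{n+1}=u_{j_n}^{1/2}\,\tilde\phi_n\,u_{j_n}^{1/2}+(1-u_{j_n})^{1/2}\,\phi_{n+1}\,(1-u_{j_n})^{1/2}$, so that $p\circ\tilde\phi_{n+1}=\psi_{n+1}$, and takes $\phi=\lim_n\tilde\phi_n$ in point-norm (equivalently, in partition form, the $N$-th approximant must carry the head term $(1-u_N)^{1/2}\phi_{N+1}(a)(1-u_N)^{1/2}$, the only piece not killed by $p$).

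Two attributions in your sketch are also wrong, and one of them hides the decisive analytic point. First, ``the fast decay from the first step'' cannot by itself force norm convergence: closeness of $\psi_{n+1}$ to $\psi_n$ says nothing about closeness of chosen lifts --- $\phi_{n+1}(a)-\phi_n(a)$ can have norm close to $2$ however small $\left\lVert\psi_{n+1}(a)-\psi_n(a)\right\rVert$ is, which is the entire difficulty of the lemma. What converts quotient smallness into norm smallness is the identity $\lim_j\left\lVert x(1-u_j)\right\rVert=\left\lVert q_I(x)\right\rVert$ for any approximate unit of $I$ (the same mechanism as item (6) of Lemma \ref{lemma:averaging}), applied to $x=\phi_{n+1}(a)-\tilde\phi_n(a)$ with $u_{j_n}$ chosen adaptively far out, together with quasi-centrality to commute $(1-u_{j_n})^{1/2}$ past these finitely many elements; this yields $\left\lVert\tilde\phi_{n+1}(a)-\tilde\phi_n(a)\right\rVert\leq\left\lVert\psi_{n+1}(a)-\psi_n(a)\right\rVert+2^{-n}$ on $F_n$, whence the Cauchy property. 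Second, complete positivity and contractivity are not ``delivered by quasi-centrality'': each $\tilde\phi_{n+1}$ is completely positive and contractive because $u^{1/2}(\cdot)u^{1/2}+(1-u)^{1/2}(\cdot)(1-u)^{1/2}$ is, the coefficient operators summing to $1$, and the limit is so as a point-norm limit of such maps; quasi-centrality enters only in the Cauchy estimate. Note finally that the choice of $u_{j_n}$ is necessarily recursive, since it must be adapted to $\tilde\phi_n$, which depends on the earlier choices; fixing all commutator estimates in advance against the initial lifts $\phi_k$, as you propose, is therefore not quite sufficient, though this is routinely repaired because your $u_n$ lie in the commutative algebra $C^*(d)$ and are quasi-central for $C^*\bigl(\bigcup_k\phi_k(A)\bigr)$. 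With these corrections your argument closes, and the $C_0(X)$-linearity upgrade goes through exactly as you state.
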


The next step is to show that ``factorable maps are liftable''.

\begin{lemma}\label{lem:factorable-lifts}
$\mathrm{fact}^1(X;A,D) \subseteq \mathrm{lift}(X;A,D)$. 
\end{lemma}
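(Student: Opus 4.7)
The plan begins with reducing to the strongly factorable case. By Bauval's density result \cite{bauval:nuclear-rkk}*{Proposition 2.3}, $\mathrm{fact}^s(X;A,D)$ is point-norm dense in $\mathrm{fact}^1(X;A,D)$, and by Lemma \ref{lem:liftable-closed}, $\mathrm{lift}(X;A,D)$ is point-norm closed; hence it suffices to prove that every strongly $C_0(X)$-factorable map lifts. Fix then $\psi = \sigma \circ \tau$ with $\tau \colon A \to M_n(\mathbb{C}) \otimes C_0(X)$ and $\sigma \colon M_n(\mathbb{C}) \otimes C_0(X) \to D$ both $C_0(X)$-linear, completely positive and contractive. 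Since $\tau$ requires no lifting, the problem reduces to producing a $C_0(X)$-linear, completely positive and contractive map $\tilde \sigma \colon M_n(\mathbb{C}) \otimes C_0(X) \to B$ with $p \circ \tilde \sigma = \sigma$; the desired lift of $\psi$ is then $\phi := \tilde \sigma \circ \tau$.

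To construct $\tilde \sigma$, I would use the Choi--Arveson viewpoint adapted to the $C_0(X)$-setting. Identifying $M_n(\mathbb{C}) \otimes C_0(X)$ with $M_n(C_0(X))$ and writing the matrix units as $\{e_{ij}\}_{i,j=1}^n$, the map $\sigma$ is entirely determined by the family of $C_0(X)$-module maps $\sigma_{ij}(f) := \sigma(e_{ij} \otimes f) \in D$, and for every $f \in C_0(X)_+$ the matrix $(\sigma_{ij}(f))_{ij}$ is a positive contraction in $M_n(D)$. Applying the classical positive-element lifting of Arveson (\cite{arveson:extensions}*{Theorem 6})---with continuous functional calculus truncating to enforce the norm bound---yields positive contractive lifts in $M_n(B)_+$; gluing these consistently as $f$ varies, by means of an approximate unit for $C_0(X)$, should yield a single $C_0(X)$-linear, cp and contractive $\tilde \sigma$ with $p \circ \tilde \sigma = \sigma$.

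The main obstacle will be this gluing step: the lifts of $(\sigma_{ij}(f))_{ij}$ for different $f$ must assemble into one $C_0(X)$-linear cp object, rather than remain a family of independent matrix lifts. Equivalently, one lifts the Choi-type ``multiplier'' element in $M_n(\mathcal{M}(D))$ corresponding to $\sigma$ to one in $M_n(\mathcal{M}(B))$, while preserving positivity, contractivity, and the $C_0(X)$-compatibility that forces $\tilde \sigma$ to land in $B$ rather than merely in $\mathcal{M}(B)$. Arveson's lifting provides the positivity and norm control; the $C_0(X)$-compatibility is then handled via the nondegeneracy and centrality of $\theta_B$ in $\mathcal{M}(B)$, which allows the module structure to be transported through the lift. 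With such a $\tilde \sigma$ in hand, the composition $\phi := \tilde \sigma \circ \tau$ lies in $\mathrm{cp}^1(X;A,B)$ and satisfies $p \circ \phi = \psi$, completing the proof.
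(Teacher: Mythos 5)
Your first paragraph is exactly the paper's argument: Bauval's density of $\mathrm{fact}^s(X;A,D)$ in $\mathrm{fact}^1(X;A,D)$ together with Lemma \ref{lem:liftable-closed} reduces the claim to lifting the contractive factor $\sigma$ through $p$, i.e.\ to Lemma \ref{lem:matrix-lifts}, after which $\phi=\tilde\sigma\circ\tau$ finishes. The gap is in how you propose to prove that matrix step.

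You plan to lift the Choi-type multiplier from $M_n(\mathcal{M}(D))$ to $M_n(\mathcal{M}(B))$ and assert that the ``$C_0(X)$-compatibility that forces $\tilde\sigma$ to land in $B$'' is then ``handled via the nondegeneracy and centrality of $\theta_B$.'' This is false as stated: for a general multiplier $m\in\mathcal{M}(B)$ one does \emph{not} have $\theta_B(f)m\in B$. For instance, if $B=C_0(X)\otimes\mathcal{K}(H)$ and $m$ is the constant function with value a noncompact operator $T\in\mathcal{B}(H)$, then $fm\notin B$ for $f\neq 0$, even though $\theta_B$ is nondegenerate and central. So a positive contractive lift in $M_n(\mathcal{M}(B))$ yields, after multiplying by $f$, an element of $M_n(\mathcal{M}(B))$ with no reason to lie in $M_n(B)$, and functional calculus cannot repair this; the obstruction is membership in the subalgebra $B_b=\{m\in\mathcal{M}(B): fm\in B \text{ for all } f\in C_0(X)\}$. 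The paper's proof of Lemma \ref{lem:matrix-lifts} is organized around precisely this point: one checks that $\bar p$ maps $B_b$ onto $D_b$ (using $C_0(X)$-linearity of $p$), defines $\bar\psi:M_n(\mathbb{C})\to D_b$ by $\bar\psi(a)=\lim_\lambda\psi(a\otimes f_\lambda)$ --- the content being that this strict limit lands in $D_b$, not merely in $\mathcal{M}(D)$ --- lifts the cp contraction $\bar\psi$ from the finite-dimensional algebra $M_n(\mathbb{C})$ through the surjection $\bar p|_{B_b}:B_b\to D_b$, and only then sets $\phi(a\otimes f)=f\bar\phi(a)$, which lies in $B$ \emph{by definition} of $B_b$. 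Note that this single multiplier-level lift also dissolves your ``gluing over $f$'' problem outright: there is nothing to glue. Two smaller slips: the Choi matrix of a contractive cp map on $M_n(\mathbb{C})$ is positive but has norm up to $n$, not $1$; and \cite{arveson:extensions}*{Theorem 6} is the closedness result (your Lemma \ref{lem:liftable-closed}), not a positive-element lifting statement --- what is actually needed is the elementary fact that positive contractions lift through any surjective $*$-homomorphism, applied here to $\bar p|_{B_b}$.
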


The above lemma follows immediately from the following one.

\begin{lemma}\label{lem:matrix-lifts}
$\mathrm{cp}^1(X;M_n(\mathbb{C}) \otimes C_0(X),D) \subseteq \mathrm{lift}(X;M_n(\mathbb{C}) \otimes C_0(X),D)$.
\end{lemma}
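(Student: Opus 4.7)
The plan is to view the map $\psi$ as a bounded continuous section of the upper semi-continuous matrix bundle $M_n(\mathcal{D}) \to X$, lift this section fiberwise using the standard $M_n$-lifting of positive contractions, and then assemble these lifts into a global continuous section via a continuous selection argument.

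First, because $\psi$ is $C_0(X)$-linear, for each $x \in X$ it induces a CP contractive map $\psi_x : M_n(\mathbb{C}) \to D(x)$, equivalently the matrix $s(x) := (\psi_x(e_{ij}))_{ij} \in M_n(D(x))_+$ satisfying $\|\sum_i \psi_x(e_{ii})\| \leq 1$. In the bundle picture of Section \ref{sec:groupoids-actions}, $s$ is a bounded continuous section of $M_n(\mathcal{D}) \to X$, and $\psi$ is recovered from it by the formula $\psi(e_{ij} \otimes f)(x) = f(x)\, s(x)_{ij}$. Conversely, any bounded continuous section $t$ of $M_n(\mathcal{B}) \to X$ with $t(x) \in L(x) := \{ b \in M_n(B(x))_+ : M_n(p_x)(b) = s(x),\ \|\sum_i b_{ii}\| \leq 1 \}$ yields the desired $C_0(X)$-linear completely positive contractive lift $\phi \in \mathrm{cp}^1(X; M_n(\mathbb{C}) \otimes C_0(X), B)$ of $\psi$ via $\phi(e_{ij} \otimes f)(x) := f(x)\, t(x)_{ij}$.

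For each $x$ the fiber $L(x)$ is non-empty, closed and convex: non-emptiness uses that the surjective $\ast$-homomorphism $M_n(p_x) : M_n(B(x)) \to M_n(D(x))$ lifts positive elements of norm at most one to positive elements of norm at most one, a standard functional calculus argument. The remaining task, which is the heart of the proof, is to produce a continuous selection of the multi-valued map $x \mapsto L(x)$. I would apply Michael's selection theorem (in its Banach bundle form) after establishing lower semi-continuity of $L$. Lower semi-continuity reduces to the following local approximate lifting statement: given $t_0 \in L(x_0)$ represented by some $T \in M_n(B)$ with $T(x_0) = t_0$, and given $\epsilon > 0$, there exists a neighborhood $U$ of $x_0$ such that for every $x \in U$ there is $t \in L(x)$ with $\|t - T(x)\|_{M_n(B(x))} < \epsilon$. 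This in turn follows from the surjectivity of $p$ on the bundle level together with upper semi-continuity of the bundle norms, by perturbing $T(x)$ by a small element of $M_n(I(x)) = \ker M_n(p_x)$ to meet the equation $M_n(p_x)(\cdot) = s(x)$, and then taking a positive contractive truncation via continuous functional calculus to return to $L(x)$.

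The main obstacle is precisely the lower semi-continuity verification and the application of Michael's theorem in the setting of an upper semi-continuous (rather than continuous) bundle over a merely locally compact, Hausdorff base $X$; this forces some care to rule out pathologies where the bundle norm drops on closed sets. Once the continuous section $t$ is in hand, the verification that $\phi$ defined as above is a $C_0(X)$-linear CP contractive lift with $p \circ \phi = \psi$ is then immediate from the identities $M_n(p_x)\, t(x) = s(x)$ and $\psi(e_{ij} \otimes f)(x) = f(x)\, s(x)_{ij}$.
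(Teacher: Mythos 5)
Your reduction is sound as far as it goes: by $C_0(X)$-linearity the Choi matrices $s(x)=(\psi_x(e_{ij}))_{ij}$ do assemble into a bounded continuous section of $M_n(\mathcal{D})$, the constraint sets $L(x)$ are nonempty, closed and convex (with the small correction that the contraction condition is $\bigl\lVert\sum_i b_{ii}\bigr\rVert\leq 1$ rather than $\lVert b\rVert\leq 1$, so nonemptiness needs the standard compression trick $b\mapsto (g(c)\oplus\cdots\oplus g(c))\,b\,(g(c)\oplus\cdots\oplus g(c))$ with $c=\sum_i b_{ii}$ and $g(t)=\min(1,t^{-1/2})$, not merely positive lifting of norm-one elements), and your local perturbation sketch is the right lower semi-continuity statement. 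The genuine gap is the step you yourself flag as ``the main obstacle'': the continuous selection for $x\mapsto L(x)$. Michael's theorem \cite{michael:selection} applies to carriers into a fixed Banach space, and its known bundle versions require Banach bundles with \emph{continuous} norm; for the merely upper semi-continuous bundles that $C_0(X)$-algebras produce there is no off-the-shelf selection theorem to invoke, and you do not supply a proof. Moreover $X$ is only locally compact Hausdorff, hence not necessarily paracompact, so even the partition-of-unity iterations underlying any Michael-type argument require a prior reduction (via separability of $B$) to a $\sigma$-compact support, which you also leave unaddressed. As written, the central step of the proof is therefore missing rather than merely routine.

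It is instructive to compare with the paper's proof, which avoids selection entirely. There one sets $B_b=\{m\in\mathcal{M}(B) : fm\in B \text{ for all } f\in C_0(X)\}$ and likewise $D_b$ --- note that $D_b$ is exactly a $C^*$-algebraic home for your bounded continuous sections that need not vanish at infinity --- and packages the whole family $(\psi_x)_{x\in X}$ into a \emph{single} completely positive contraction $\bar\psi : M_n(\mathbb{C})\to D_b$, $\bar\psi(a)=\lim_\lambda \psi(a\otimes f_\lambda)$ for an approximate unit $(f_\lambda)$ of $C_0(X)$. Since $p$ is $C_0(X)$-linear, the extension $\bar p$ maps $B_b$ onto $D_b$ (a noncommutative Tietze-type fact, cf.\ \cite{fieux:pi-algebra}), and one applies the elementary finite-dimensional lifting theorem once, through the surjection $B_b\to D_b$, to get $\bar\phi : M_n(\mathbb{C})\to B_b$; the lift is then $\phi(a\otimes f)=f\bar\phi(a)$. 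In effect, the single surjectivity statement $\bar p(B_b)=D_b$ does all the gluing that your approach tries to extract from a bundle selection theorem. If you want to salvage your fiberwise picture, the workable repair is not Michael's theorem but convexity plus a partition of unity subordinate to a locally finite cover of the ($\sigma$-compact) support of $B$ by sets over which local lifts exist --- but the multiplier-algebra route is both shorter and free of the upper semi-continuity pathologies you would otherwise have to control.
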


\begin{proof}
To begin with, let us write $B_b=\{m \in \mathcal{M}(B) : fm \in B \text{ for all } f \in C_0(X)\}$ as in \cite{le_gall:equivariant-kk} (which is denoted $\mathcal{M}^{\#}(B)$ in \cite{fieux:pi-algebra}). Let $\bar p : \mathcal{M}(B) \to \mathcal{M}(D)$ be the canonical, surjective extension of $p : B \rightarrow D$, see for instance \cite{akemann-pedersen-tomiyama:multipliers}*{Theorem 4.2}. Since $p$ is a $C_0(X)$-linear, $\bar p$ maps $B_b$ onto $D_b$. Now let $\psi \in \mathrm{cp}^1(X;M_n(\mathbb{C}) \otimes C_0(X),D)$ and define $\bar \psi : M_n(\mathbb{C}) \to D_b$ by 
\[
\bar \psi(a)=\lim_{\lambda} \psi(a \otimes f_{\lambda}),  
\]where $(f_{\lambda})$ is an approximate unit for $C_0(X)$ with $0 \leq f_{\lambda} \leq 1$, see the proof of \cite{fieux:pi-algebra}*{Proposition 3.6}. It is easy to see that $\bar \psi$ is completely positive and contractive and hence there exists a completely positive and contractive $\bar \phi : M_n(\mathbb{C}) \to B_b$ such that $\bar p \circ \bar \phi=\bar \psi$. Now set $\phi : M_n(\mathbb{C}) \otimes C_0(X) \to B$ to be
\[
\phi(a \otimes f)=f\bar \phi(a).   
\]It is clear that $\phi \in \mathrm{cp}^1(X;M_n(\mathbb{C}) \otimes C_0(X),B)$ such that $p \circ \phi=\psi$.
\end{proof}
 
\begin{proof}[{Proof of Proposition \ref{prop:c0x-choi-effros}}]
The result follows from Lemmas \ref{lem:liftable-closed}, \ref{lem:factorable-lifts} and \ref{lem:matrix-lifts}.
\end{proof}

Let us remark that in \cite{bauval:nuclear-rkk}*{Section 3}, the author obtains similar results which coincide with the ones presented above in the case when $B$ is unital. This completes the second purpose of this appendix and we now move onto the first. In \cite{gabe:lifting}, the author proves a sweeping generalization of Proposition \ref{prop:c0x-choi-effros} in the context of $C^*$-algebras over topological spaces. One should note, however, that \cite{gabe:lifting} does not refer to \cite{kasparov-skandalis:group-actions-buildings} or \cite{bauval:nuclear-rkk} and it is therefore reasonable to relate the results of the former with the latter ones. Let us begin with a few notations. Let $\mathfrak{X}$ be a topological space and $A$ be a $C^*$-algebra. We will write $\mathbb{O}(\mathfrak{X})$ for the complete lattice of open sets of $\mathfrak{X}$ and $\mathbb{I}(A)$ for the complete lattice of closed, two-sided ideals of $A$.

\begin{definition}
Let $\mathfrak{X}$ be a topological space. An $\mathfrak{X}$-$C^*$-algebra is a pair $(A,\Theta_A)$, where $A$ is a $C^*$-algebra and $\Theta_A : \mathbb{O}(\mathfrak{X}) \rightarrow \mathbb{I}(A)$ is an order preserving map, i.e., a map such that if $U \subseteq V$ in $\mathbb{O}(\mathfrak{X})$ then $\Theta_A(U) \subseteq \Theta_A(V)$.
\end{definition}

It is customary to write only $A$ for an $\mathfrak{X}$-$C^*$-algebra $(A,\Theta_A)$, suppressing the structure map $\Theta_A$. It is also customary to write $A(U)$ for $\Theta_A(U)$, whenever $U \in \mathbb{O}(\mathfrak{X})$. 

A remarkable feature (at least to the authors!) of the general nature of $\mathfrak{X}$-$C^*$-algebras is the following. Let $A$ be a $\mathfrak{X}$-$C^*$-algebra. Then $\mathcal{M}(A)$ and $\mathcal{Q}(A)$ can both be made into $\mathfrak{X}$-$C^*$-algebras as follows. Given $U \in \mathbb{O}(\mathfrak{X})$, one sets 
\begin{subequations}
\begin{equation}
  \mathcal{M}(A)(U)=\{m \in \mathcal{M}(A) : m(a) \in A(U) \text{ for all } a \in A\},
\end{equation}
\begin{equation}
 \mathcal{Q}(A)(U)=q_A(\mathcal{M}(A)(U)). 
\end{equation}
\end{subequations}

\begin{definition}
Let $A$ and $B$ be two $\mathfrak{X}$-$C^*$-algebras. A $*$-homomorphism (or a completely positive map) $\phi : A \rightarrow B$ is said to be $\mathfrak{X}$-equivariant if $\phi(A(U)) \subseteq B(U)$ for all $U \in \mathbb{O}(\mathfrak{X})$.
\end{definition}

Because of the general nature of $\mathfrak{X}$-$C^*$-algebras, one oftentimes needs some continuity conditions, made precise in the following definition.

\begin{definition}
Let $A$ be an $\mathfrak{X}$-$C^*$-algebra. $A$ is said to be
\begin{itemize}
  \item \emph{finitely lower semi-continuous} if $A(\mathfrak{X})=A$ and given $U,V \in \mathbb{O}(\mathfrak{X})$, one has
  \[
  A(U) \cap A(V)=A(U \cap V);  
  \]
  \item \emph{lower semi-continuous} if $A(\mathfrak{X})=A$ and given a family $(U_{\lambda}) \subseteq \mathbb{O}(\mathfrak{X})$, one has 
  \[
  \bigcap_{\lambda}A(U_{\lambda})=A(U),  
  \]where $U$ is the interior of $\bigcap_{\lambda} U_{\lambda}$;
  \item \emph{finitely upper semi-continuous} if $A(\emptyset)=0$ and given $U,V \in \mathbb{O}(\mathfrak{X})$, one has 
  \[
  A(U)+A(V)=A(U \cup V);  
  \]
  \item \emph{monotone upper semi-continuous} if given an increasing net $(U_{\lambda}) \subseteq \mathbb{O}(\mathfrak{X})$, one has 
  \[
  \overline{\bigcup_{\lambda}A(U_{\lambda})}=A\left(\bigcup_{\lambda}U_{\lambda}\right);  
  \]
  \item \emph{upper semi-continuous} if it is finitely and monotone upper semi-continuous.
\end{itemize}
\end{definition}

Let $\mathfrak{X}$ be a locally compact, Hausdorff space $X$ (note the difference in notation as we exclusively used $X$ for a \emph{space}). By \cite{meyer-nest:bootstrap-class}*{Section 2}, there is a bijective correspondence between \emph{finitely lower semi-continuous} and \emph{upper semi-continuous} $\mathfrak{X}$-$C^*$-algebras and $C_0(X)$-algebras. Furthermore, if $A$ and $B$ are two $C_0(X)$-algebras and $\phi : A \rightarrow B$ is $*$-homomorphism or a completely positive map, then the following are equivalent.
\begin{itemize}
  \item $\phi$ is $C_0(X)$-linear.
  \item $\phi$ is $\mathfrak{X}$-equivariant.
  \item $\phi([\mathfrak{m}_xA]) \subseteq [\mathfrak{m}_xB]$ for all $x \in X$, i.e., $\phi$ descends to $\phi_x : A(x) \rightarrow B(x)$.
\end{itemize}Actually, more is true.

\begin{lemma}\label{lem:x-multiplier}
Let $\mathfrak{X}$ be locally compact, Hausdorff space $X$, $A$ and $B$ be two $C_0(X)$-algebras, $B$ be separable and $\phi : A \to \mathcal{M}(B)$ be a $*$-homomorphism (or a completely positive map). Then the following are equivalent.
\begin{enumerate}
  \item $\phi$ is $C_0(X)$-linear.
  \item $\phi$ is $\mathfrak{X}$-equivariant.
  \item For each $x \in X$, $\phi$ induces a $*$-homomorphism (or a completely positive map) $\phi_x : A(x) \to \mathcal{M}(B(x))$ such that $\phi_x(a(x))=\xi_x(\phi(a))$.
\end{enumerate}
\end{lemma}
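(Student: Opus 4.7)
The plan is to establish the cycle $(1) \Rightarrow (2) \Rightarrow (3) \Rightarrow (1)$. The central tool throughout is a fiberwise determination principle for $\mathcal{M}(B)$: for any $m \in \mathcal{M}(B)$, $b \in B$ and $x \in X$ one has $(mb)(x) = \xi_x(m)b(x)$, while $\|mb\| = \sup_{x \in X}\|(mb)(x)\|$ by upper semi-continuity. Consequently, $\xi_x(m) = 0$ for every $x \in X$ forces $mb = 0$ for every $b \in B$, and hence $m = 0$ by nondegeneracy of $B$ in $\mathcal{M}(B)$. Separability of $B$ enters only to guarantee that the extensions $\xi_x \colon \mathcal{M}(B) \to \mathcal{M}(B(x))$ of \eqref{eq:xi} exist.

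For $(1) \Rightarrow (2)$, I would fix an open $U \subseteq X$, an $a \in A(U)$ and an approximate unit $(e_\lambda)$ of $C_0(U)$, so that $\theta_A(e_\lambda)a \to a$ in norm. Norm-continuity and $C_0(X)$-linearity of $\phi$ then give $\phi(a) = \lim_\lambda \theta_B(e_\lambda)\phi(a)$, and for any $b \in B$, $\phi(a)b = \lim_\lambda e_\lambda(\phi(a)b) \in [C_0(U)B] = B(U)$ since $B(U)$ is closed; thus $\phi(a) \in \mathcal{M}(B)(U)$. For $(2) \Rightarrow (3)$, well-definedness of $\phi_x$ amounts to showing $\xi_x(\phi(a)) = 0$ whenever $a \in [\mathfrak{m}_x A]$; by density it suffices to take $a = \theta_A(f)a'$ with $f \in \mathfrak{m}_x$ and $a' \in A$. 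Setting $U = \{y : f(y) \neq 0\}$, which is open and misses $x$, one has $a \in A(U)$, so $(2)$ gives $\phi(a)b \in B(U)$ for every $b \in B$; since $x \notin U$, this forces $(\phi(a)b)(x) = 0$, and surjectivity of $\pi_x$ yields $\xi_x(\phi(a)) = 0$.

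For $(3) \Rightarrow (1)$, given $f \in C_0(X)$ and $a \in A$, I would set $m = \phi(\theta_A(f)a) - \theta_B(f)\phi(a) \in \mathcal{M}(B)$. A direct verification from \eqref{eq:xi} gives $\xi_x(\theta_B(f)n) = f(x)\xi_x(n)$ for any $n \in \mathcal{M}(B)$; combined with $\phi_x(a(x)) = \xi_x(\phi(a))$ and linearity of $\phi_x$, this yields $\xi_x(m) = \phi_x(f(x)a(x)) - f(x)\phi_x(a(x)) = 0$ for every $x \in X$, and hence $m = 0$ by the principle from the first paragraph. Only linearity and norm-continuity of $\phi$ are invoked, so the argument covers the $*$-homomorphism and completely positive cases uniformly. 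There is no genuine obstacle here; the only mild subtlety is ensuring the compatibility relation $\xi_x(\theta_B(f)n) = f(x)\xi_x(n)$ on multipliers, and the main point is simply to transport the standard fiberwise reasoning from $B$ to $\mathcal{M}(B)$ through the maps $\xi_x$.
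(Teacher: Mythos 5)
Your proof is correct and takes essentially the same route as the paper's: the same cycle $(1)\Rightarrow(2)\Rightarrow(3)\Rightarrow(1)$, with $(1)\Rightarrow(2)$ resting on $A(U)=C_0(U)A$, $(2)\Rightarrow(3)$ on $\phi(a)b\in B(U)$ for an open set $U$ missing $x$ (the paper simply takes $U=X\setminus\{x\}$ instead of your $U=\{f\neq 0\}$ plus a density argument), and $(3)\Rightarrow(1)$ on the identical fiberwise computation $\xi_x\bigl(\phi(fa)-\theta_B(f)\phi(a)\bigr)=0$ for all $x\in X$. The only differences are cosmetic: you spell out the approximate-unit step and the fiberwise determination principle for $\mathcal{M}(B)$ that the paper leaves implicit.
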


\begin{proof}
(1)$\implies$(2). We recall that for $U \in \mathbb{O}(\mathfrak{X})=\mathbb{O}(X)$, $A(U)=C_0(U)A$. Then
\[
\phi(A(U))(B)=\phi(C_0(U)A)(B)=C_0(U)\phi(A)(B) \subseteq C_0(U)B=B(U),
\]i.e., $\phi(A(U)) \subseteq \mathcal{M}(B)(U)$ for all $U \in \mathbb{O}(\mathfrak{X})=\mathbb{O}(X)$.

\medskip

\noindent (2)$\implies$(3). It is enough to observe that for every $x \in X$, $\phi(a) \in \mathcal{M}(B)(X \setminus \{x\})$, whenever $a \in \mathfrak{m}_xA=A(X \setminus \{x\})$.

\medskip

\noindent (3)$\implies$(1). We fix $f \in C_0(X)$ and $a \in A$. Then for every $x \in X$,
\[
\xi_x(\phi(fa)-\theta_B(f)\phi(a))=\phi_x(f(x)a(x))-f(x)\phi_x(a(x))=0,
\]i.e., $((\phi(fa)-\theta_B(f)\phi(a))(b))(x)=0$ for all $x \in X$ and all $b \in B$, i.e., $\phi(fa)-\theta_B(f)\phi(a)=0$, i.e., $\phi$ is $C_0(X)$-linear.
\end{proof}

\begin{lemma}\label{lem:x-calkin}
Let $\mathfrak{X}$ be a locally compact, Hausdorff space $X$, $A$ and $B$ be two $C_0(X)$-algebras, and $\psi : A \to \mathcal{Q}(B)$ be a $C_0(X)$-linear $*$-homomorphism (or a completely positive map) such that there is a $C_0(X)$-linear, completely positive lift $\phi : A \rightarrow \mathcal{M}(B)$ with $q_B \circ \phi=\psi$. Then $\psi$ is $\mathfrak{X}$-equivariant.
\end{lemma}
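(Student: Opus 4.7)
The plan is to deduce the conclusion almost immediately from the previous Lemma \ref{lem:x-multiplier} applied to the given lift $\phi$. The hypothesis is that $\phi : A \to \mathcal{M}(B)$ is $C_0(X)$-linear and that $q_B \circ \phi = \psi$. One should observe that Lemma \ref{lem:x-multiplier} equates $C_0(X)$-linearity of $\phi$ with its $\mathfrak{X}$-equivariance, and the construction of the $\mathfrak{X}$-$C^*$-algebra structure on $\mathcal{Q}(B)$ is precisely designed so that $q_B$ is $\mathfrak{X}$-equivariant in a tautological way. Hence one expects no real obstacle here; the statement is essentially a bookkeeping check that the three pieces of structure are compatible.

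Concretely, I would proceed as follows. First, I would fix an arbitrary $U \in \mathbb{O}(\mathfrak{X}) = \mathbb{O}(X)$. By Lemma \ref{lem:x-multiplier} (the implication (1)$\Rightarrow$(2) applied to $\phi$), the $C_0(X)$-linearity of $\phi$ gives
\[
\phi(A(U)) \subseteq \mathcal{M}(B)(U).
\]
Next, I would apply the canonical projection $q_B : \mathcal{M}(B) \to \mathcal{Q}(B)$ to both sides. By the very definition of the $\mathfrak{X}$-$C^*$-algebra structure on $\mathcal{Q}(B)$, namely $\mathcal{Q}(B)(U) = q_B(\mathcal{M}(B)(U))$, this yields
\[
\psi(A(U)) = q_B(\phi(A(U))) \subseteq q_B(\mathcal{M}(B)(U)) = \mathcal{Q}(B)(U).
\]
Since $U$ was arbitrary, $\psi$ is $\mathfrak{X}$-equivariant by definition.

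The only point worth double-checking in writing this out is that Lemma \ref{lem:x-multiplier} indeed applies verbatim to the completely positive case (not only the $*$-homomorphism case) since the hypothesis of the present lemma allows $\phi$ to be merely completely positive; inspection of the proof of Lemma \ref{lem:x-multiplier} shows that the argument (1)$\Rightarrow$(2) uses only the multiplicative-module identity $\phi(fa) = \theta_B(f)\phi(a)$ in the form $\phi(C_0(U)A) \subseteq C_0(U)\phi(A)$, which holds equally well for $C_0(X)$-linear completely positive maps, so no modification is needed. Hence the argument is complete.
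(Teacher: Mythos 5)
Your proof is correct and coincides with the paper's own argument: fix $U \in \mathbb{O}(X)$, use $C_0(X)$-linearity of $\phi$ (via Lemma \ref{lem:x-multiplier}, (1)$\Rightarrow$(2)) to get $\phi(A(U)) \subseteq \mathcal{M}(B)(U)$, and then apply $q_B$ together with the definition $\mathcal{Q}(B)(U)=q_B(\mathcal{M}(B)(U))$ to conclude $\psi(A(U)) \subseteq \mathcal{Q}(B)(U)$. Your extra remark that the implication used holds for completely positive maps as well is a sound bookkeeping check, fully consistent with the paper's statement of Lemma \ref{lem:x-multiplier}.
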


\begin{proof}
We fix $U \in \mathbb{O}(\mathfrak{X})=\mathbb{O}(X)$. Then
\[
\psi(A(U))=q_B(\phi(A(U))) \subseteq q_B(\mathcal{M}(B)(U))=\mathcal{Q}(B)(U),  
\]which completes the proof.
\end{proof}

After these preliminaries, we can state (some special cases of) the results from \cite{gabe:lifting}.

\begin{proposition}[{\cite{gabe:lifting}*{Proposition 5.4}}]
Let $\mathfrak{X}$ be a topological space, $A$ be a separable, nuclear, lower semi-continuous $\mathfrak{X}$-$C^*$-algebra, $B$ be a separable, $\mathfrak{X}$-$C^*$-algebra with property (UBS) and $\psi : A \rightarrow \mathcal{Q}(B)$ be an $\mathfrak{X}$-equivariant, completely positive map. Then there is an $\mathfrak{X}$-equivariant, completely positive map $\phi : A \rightarrow \mathcal{M}(B)$ such that $q_B \circ \phi=\psi$.  
\end{proposition}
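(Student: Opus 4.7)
The plan is to follow the Arveson--Choi--Effros template for the Choi--Effros lifting theorem, adapted so as to preserve $\mathfrak{X}$-equivariance throughout. Let $\mathcal{L}$ denote the collection of $\mathfrak{X}$-equivariant, completely positive and contractive maps $\psi' : A \to \mathcal{Q}(B)$ that admit an $\mathfrak{X}$-equivariant, completely positive and contractive lift $\phi' : A \to \mathcal{M}(B)$ with $q_B \circ \phi' = \psi'$. The goal is to show $\psi \in \mathcal{L}$, and the strategy breaks into three standard steps: (i) $\mathcal{L}$ is closed in the point-norm topology; (ii) $\mathfrak{X}$-equivariantly factorable maps lie in $\mathcal{L}$; (iii) $\psi$ is a point-norm limit of such factorable maps.

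For step (i), I would imitate Arveson's original argument from \cite{arveson:extensions} (and its $C_0(X)$-incarnation as Lemma \ref{lem:liftable-closed}), but using quasi-central approximate units for $B$ that are compatible with the $\mathfrak{X}$-structure, so that the ``averaging'' operation $b \mapsto \sum_l \Delta_l\, b\, \Delta_l$ sends $\mathfrak{X}$-equivariant lifts to $\mathfrak{X}$-equivariant lifts. Concretely, given a point-norm convergent sequence $\psi'_n \to \psi'_{\infty}$ in $\mathcal{L}$ with lifts $\phi'_n$, one builds a new lift for $\psi'_{\infty}$ as a strict-convergent sum $\sum_l \Delta_l \phi'_{n_l}(\cdot) \Delta_l$ plus a Cauchy correction, where the $\Delta_l$ are square roots of differences of a well-chosen approximate unit. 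The precise condition one needs on this approximate unit --- that it be asymptotically central and simultaneously compatible with the $\mathfrak{X}$-ideal lattice $\Theta_B$ --- is exactly what property (UBS) guarantees.

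For step (ii), observe that if $\psi' = \sigma \circ \tau$ with $\tau : A \to M_n \otimes D$ and $\sigma : M_n \otimes D \to \mathcal{Q}(B)$ both $\mathfrak{X}$-equivariant completely positive contractions, for some suitable ``elementary'' $\mathfrak{X}$-$C^*$-algebra $D$, then it suffices to lift $\sigma$. Since $\sigma$ is determined by an $n \times n$ matrix of $\mathfrak{X}$-equivariant completely positive contractions $D \to \mathcal{Q}(B)$, and $D$ is elementary, lifting reduces via a Stinespring-type argument to lifting $\mathfrak{X}$-equivariant maps out of $D$, which can be done by hand using the $\mathfrak{X}$-equivariant surjectivity $\mathcal{M}(B)(U) \twoheadrightarrow \mathcal{Q}(B)(U)$ on each $U \in \mathbb{O}(\mathfrak{X})$; this mimics Lemma \ref{lem:matrix-lifts}. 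Step (iii) follows from the appropriate generalization of Bauval's theorem to the lower semi-continuous $\mathfrak{X}$-setting: nuclearity together with the lower semi-continuity assumption on $A$ should imply that $\mathrm{id}_A$ is an $\mathfrak{X}$-point-norm limit of $\mathfrak{X}$-factorable contractions $A \to M_n \otimes D \to A$, hence $\psi = \psi \circ \mathrm{id}_A$ is a point-norm limit of factorable maps, each of which lies in $\mathcal{L}$ by step (ii).

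The main obstacle I expect is step (i), specifically the construction of the quasi-central approximate unit that simultaneously almost-commutes with a prescribed countable set of multipliers and has its differences lying (to within small error) in each $B(U)$ needed to preserve $\mathfrak{X}$-equivariance of the averaged lift. In the $C_0(X)$-setting this is automatic (the unit can be chosen inside a fiber-respecting subalgebra), but over a general topological space $\mathfrak{X}$ with no fiber structure one must exploit property (UBS) --- which I interpret as a semi-continuity/splitting condition on $\Theta_B$ --- to produce an approximate unit respecting the entire lattice $\mathbb{O}(\mathfrak{X})$. Once this technical input is in place, the remaining assembly mirrors the classical proof essentially verbatim.
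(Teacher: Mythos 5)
The first thing to note is that the paper offers no proof of this proposition: it is quoted verbatim from \cite{gabe:lifting}, and the closest argument the paper actually carries out is its proof of Proposition \ref{prop:c0x-choi-effros} via Lemmas \ref{lem:liftable-closed}, \ref{lem:factorable-lifts} and \ref{lem:matrix-lifts} --- precisely the Arveson--Choi--Effros template you describe. Transplanted to a general topological space $\mathfrak{X}$, however, your outline has a genuine gap at step (iii), and it is the heart of the matter. An $\mathfrak{X}$-$C^*$-algebra is nothing more than an order-preserving map $\mathbb{O}(\mathfrak{X}) \to \mathbb{I}(A)$: there is no base algebra acting on $A$, hence no analogue of $M_n(\mathbb{C}) \otimes C_0(X)$ to factor through; your ``elementary $\mathfrak{X}$-$C^*$-algebra $D$'' is never specified; and the hypothesis on $A$ is \emph{ordinary} nuclearity together with lattice-theoretic lower semi-continuity of $\Theta_A$, not any factorization property. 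The ``appropriate generalization of Bauval's theorem'' you invoke to approximate $\mathrm{id}_A$ by $\mathfrak{X}$-equivariant factorable contractions does not exist: \cite{bauval:nuclear-rkk}*{Theorem 7.2} is specific to $C_0(X)$-algebras over a locally compact Hausdorff space that are continuous fields, and its proof uses fibers, upper semi-continuity of norm functions and partitions of unity, none of which makes sense over a bare topological space. So steps (ii)--(iii) assume exactly what makes the general statement nontrivial. What your scheme does prove, when $\mathfrak{X}=X$ is locally compact Hausdorff (so that, as the appendix explains, lower semi-continuity plus nuclearity yields $C_0(X)$-nuclearity via Bauval), is Proposition \ref{prop:c0x-choi-effros} and its summarized corollary with codomain $\mathcal{Q}(B)$ --- and indeed the paper remarks there that in this special case one can run Lemma \ref{lem:matrix-lifts} and bypass \cite{gabe:lifting} altogether.

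Step (i) is closer in spirit to the actual mechanism of \cite{gabe:lifting}, but as written it is also unsubstantiated. You assign to property (UBS) --- whose definition you admit you are guessing --- the job of producing a quasi-central approximate unit whose increments $\Delta_l$ are compatible with \emph{every} ideal $\mathcal{M}(B)(U)$, $U \in \mathbb{O}(\mathfrak{X})$, simultaneously; for an uncountable lattice there is no reason a single sequence can meet uncountably many such constraints, and you supply no reduction to countable ideal data, which any honest argument must contain. Gabe's proof does not run the factorization scheme at all: since $A$ is separable and nuclear, some completely positive lift exists by the classical Choi--Effros theorem, and the real work --- where (UBS), a regularity property of the family $\{B(U)\}$ making $\sigma$-ideal-type quasi-central technology available, and the lower semi-continuity of $\Theta_A$ genuinely enter --- lies in correcting a lift so that it respects the ideal structure. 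In short, your proposal reproves the commutative-base results of the appendix but does not reach the stated proposition: step (iii) rests on input that is unavailable, and step (i) on input that is unproved.
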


Several remarks are in order. Firstly, property (UBS) is another regularity condition and is defined in \cite{gabe:lifting}*{Definition 4.12}. Secondly, let $\mathfrak{X}$ be a locally compact, Hausdorff space $X$ in the above proposition. Then by the discussion before Lemma \ref{lem:x-multiplier}, $A$ is a separable, nuclear, continuous field of $C^*$-algebras, i.e., $A$ is $C_0(X)$-nuclear. By \cite{gabe:lifting}*{Example 4.14}, $B$ automatically has property (UBS) in this case. The lift $\phi$ is $C_0(X)$-linear by Lemma \ref{lem:x-multiplier} and therefore $\psi$ itself is  $C_0(X)$-linear, by Lemma \ref{lem:x-calkin}. Thirdly, the proof of \cite{gabe:lifting}*{Proposition 5.4}, shows that if $\psi \in \mathrm{cp}^1(X;A,\mathcal{Q}(B))$ then $\phi \in \mathrm{cp}^1(X;A,\mathcal{M}(B))$, allowing a bit abuse of notation, of course, because none of the codomain algebras are $C_0(X)$-algebras. Let us summarize our discussion.

\begin{proposition}
Let $A$ and $B$ be two $C_0(X)$-algebras, $A$ be separable and $C_0(X)$-nuclear, $B$ be separable and $\psi \in \mathrm{cp}^1(X;A,\mathcal{Q}(B))$. Then there is a $\phi \in \mathrm{cp}^1(X;A,\mathcal{M}(B))$ such that $q_B \circ \phi=\psi$.
\end{proposition}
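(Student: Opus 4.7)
The plan is to reduce the statement to a direct application of \cite{gabe:lifting}*{Proposition 5.4}, by translating between the $C_0(X)$-algebra language and the $\mathfrak{X}$-$C^*$-algebra language as developed in the preceding paragraphs. So first I would argue that the hypotheses of Gabe's proposition are all in force. By the discussion preceding Lemma \ref{lem:x-multiplier}, any $C_0(X)$-algebra is, equivalently, a finitely lower semi-continuous and upper semi-continuous $\mathfrak{X}$-$C^*$-algebra, so both $A$ and $B$ qualify as $\mathfrak{X}$-$C^*$-algebras; in particular $A$ is lower semi-continuous. Since $A$ is $C_0(X)$-nuclear, Bauval's theorem implies $A$ is nuclear in the ordinary sense. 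And since $B$ is separable and comes from a $C_0(X)$-algebra, \cite{gabe:lifting}*{Example 4.14} gives that $B$ automatically has property (UBS).

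Next, I would verify that the given $\psi \in \mathrm{cp}^1(X;A,\mathcal{Q}(B))$ is $\mathfrak{X}$-equivariant in Gabe's sense. For $U \in \mathbb{O}(X)$ we have $A(U) = C_0(U)A$, so by $C_0(X)$-linearity of $\psi$,
\[
\psi(A(U)) = \psi(\theta_A(C_0(U))A) = q_B(\theta_B(C_0(U)))\psi(A) \subseteq \mathcal{Q}(B)(U),
\]
since $\theta_B(C_0(U)) \subseteq \mathcal{M}(B)(U)$ by definition. Hence $\psi$ is $\mathfrak{X}$-equivariant. Applying \cite{gabe:lifting}*{Proposition 5.4} produces an $\mathfrak{X}$-equivariant, completely positive lift $\phi : A \rightarrow \mathcal{M}(B)$ of $\psi$; inspection of the proof there shows one can in fact arrange $\phi$ to be contractive, as already noted in the paper's discussion.

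Finally, I would invoke Lemma \ref{lem:x-multiplier} to promote $\mathfrak{X}$-equivariance of $\phi$ to $C_0(X)$-linearity, giving $\phi \in \mathrm{cp}^1(X;A,\mathcal{M}(B))$ with $q_B \circ \phi = \psi$. The only genuinely substantive input is Gabe's Proposition 5.4 itself; the rest is a dictionary translation, and so the main ``difficulty'' is conceptual, namely matching the two frameworks (in particular recognizing that finitely lower semi-continuous $\mathfrak{X}$-$C^*$-algebras over a locally compact Hausdorff space coincide with $C_0(X)$-algebras, and that $C_0(X)$-linearity for maps into $\mathcal{Q}(B)$ or $\mathcal{M}(B)$ corresponds to $\mathfrak{X}$-equivariance in the lattice-of-ideals sense).
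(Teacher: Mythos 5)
Your route is essentially the paper's own: the proposition is stated there precisely as a summary of the remarks following \cite{gabe:lifting}*{Proposition 5.4}, obtained by the same dictionary translation you describe, so in substance you have reproduced the intended proof. One detail of yours is actually cleaner than the paper's write-up: you verify $\mathfrak{X}$-equivariance of $\psi$ directly from $C_0(X)$-linearity via the lattice computation $\psi(A(U)) \subseteq q_B(\mathcal{M}(B)(U)) = \mathcal{Q}(B)(U)$, whereas Lemma \ref{lem:x-calkin} presupposes the existence of a $C_0(X)$-linear lift and so could not be invoked here without circularity. You also do not need the paper's alternative, self-contained route (running the Arveson-style argument of Lemmas \ref{lem:liftable-closed} and \ref{lem:matrix-lifts} with codomain $\mathcal{Q}(B)$, which is the paper's ``differently than the proof in \cite{gabe:lifting}'' remark), but be aware it exists.

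There is, however, one step that fails as literally stated: you claim that since every $C_0(X)$-algebra is a finitely lower semi-continuous and upper semi-continuous $\mathfrak{X}$-$C^*$-algebra, ``in particular $A$ is lower semi-continuous.'' The Meyer--Nest correspondence \cite{meyer-nest:bootstrap-class} yields only \emph{finite} lower semi-continuity, while \cite{gabe:lifting}*{Proposition 5.4} demands lower semi-continuity for arbitrary families of open sets, and this genuinely fails for general $C_0(X)$-algebras. For instance, take $X=[0,1]$ and $A=\C^2$ with $\theta_A(f)=(f(0),f(1))$: then $A(U_n)=\C\oplus 0$ for $U_n=[0,1/n)$, $n\geq 2$, yet the interior of $\bigcap_n U_n=\{0\}$ is empty, so $\bigcap_n A(U_n)\neq A(\emptyset)=0$. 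The repair is exactly the point implicit in the paper's second remark: by Bauval's theorem, $C_0(X)$-nuclearity of $A$ forces $A$ to be a \emph{continuous} field, and continuity (in particular lower semi-continuity) of the norm functions $x\mapsto \lVert a(x)\rVert$ is what upgrades finite lower semi-continuity to the full condition --- if $a$ vanishes outside $\bigcap_\lambda U_\lambda$, lower semi-continuity of the norm forces $a$ to vanish on the boundary points as well, i.e.\ outside the interior. Since you already invoke Bauval for ordinary nuclearity, this is a local correction rather than a change of strategy, but without it the hypothesis of Gabe's proposition is not verified.
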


We note, however, in this form, we are able to prove the above proposition differently than the proof in \cite{gabe:lifting}. More precisely, although $\mathcal{Q}(B)$ is not a $C_0(X)$-algebra and one cannot therefore apply Proposition \ref{prop:c0x-choi-effros}, one can, however, still run the proof as described above, especially, Lemma \ref{lem:matrix-lifts}. Thus we obtain another justification of the assumption of Lemma \ref{lem:lift-calkin}. Finally, we have the following (special case of the) $\mathfrak{X}$-equivariant Choi-Effros lifting theorem. Before stating it, let us recall that an $\mathfrak{X}$-$C^*$-extension is a short exact sequence 
\[
0 \rightarrow I \rightarrow B \rightarrow D \rightarrow 0,  
\]of $\mathfrak{X}$-$C^*$-algebras and $\mathfrak{X}$-equivariant, $*$-homomorphims such that for each $U \in \mathbb{O}(\mathfrak{X})$, the sequence
\[
0 \rightarrow I(U) \rightarrow B(U) \rightarrow D(U) \rightarrow 0  
\]is exact.

\begin{theorem}[{\cite{gabe:lifting}*{Theorem 5.6}}]\label{theo:x-lifting}
Let $\mathfrak{X}$ be a topological space and $A$ be a separable, nuclear, lower semi-continuous $\mathfrak{X}$-$C^*$-algebra. Let 
\[
0 \rightarrow I \rightarrow B \xrightarrow{p} D \rightarrow 0  
\]be an $\mathfrak{X}$-$C^*$-extension, such that $I$ is separable and has property (UBS). Let $\psi : A \rightarrow D$ be an $\mathfrak{X}$-equivariant, completely positive map. Then there is an $\mathfrak{X}$-equivariant, completely positive map $\phi : A \rightarrow B$ such that $p \circ \phi=\psi$.  
\end{theorem}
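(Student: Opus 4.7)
The plan is to follow the Arveson--Busby reduction strategy that has already proven decisive in Section 5 of this article: pull the extension back to a pullback along a Busby-type invariant, lift into the multiplier algebra first, and then reassemble. Specifically, I would build an $\mathfrak{X}$-equivariant Busby invariant $\mu \colon D \to \mathcal{Q}(I)$ for the given $\mathfrak{X}$-$C^*$-extension exactly as in Section 3, by defining $\bar{\mu}(b)(e)=b\iota(e)$ and $\mu(p(b))=q_I(\bar{\mu}(b))$. Since $\iota$ is $\mathfrak{X}$-equivariant and $\mathcal{M}(I)(U)=\{m\in\mathcal{M}(I): m\cdot I\subseteq I(U)\}$ by definition of the $\mathfrak{X}$-$C^*$-structure on $\mathcal{M}(I)$, one checks directly that $\bar{\mu}(B(U))\subseteq \mathcal{M}(I)(U)$, and hence $\mu(D(U))\subseteq \mathcal{Q}(I)(U)$ for every $U\in\mathbb{O}(\mathfrak{X})$.

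Next I would establish an $\mathfrak{X}$-equivariant version of the pullback correspondence of Proposition \ref{prop:c0x-busby} and Corollary \ref{cor:equivariant-busby}: namely, that $B$ is $\mathfrak{X}$-equivariantly isomorphic to
\[
E=\{(m,d)\in\mathcal{M}(I)\oplus D : q_I(m)=\mu(d)\},
\]
where $E(U)$ is defined as $\{(m,d)\in E : m\in\mathcal{M}(I)(U),\ d\in D(U)\}$. The isomorphism is $\Theta(b)=(\bar{\mu}(b),p(b))$, and one verifies $\Theta(B(U))=E(U)$ by a diagram chase using $\mathfrak{X}$-equivariance of $\iota,p$ and the surjectivity in the sequence $0\to I(U)\to B(U)\to D(U)\to 0$ furnished by the $\mathfrak{X}$-$C^*$-extension hypothesis.

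With this in hand, form $\dot{\psi}=\mu\circ\psi \colon A\to\mathcal{Q}(I)$, which is $\mathfrak{X}$-equivariant and completely positive. Applying the $\mathfrak{X}$-equivariant multiplier-algebra lifting theorem (the special case of \cite{gabe:lifting}*{Proposition 5.4} quoted in the excerpt, whose hypotheses are met because $A$ is separable, nuclear, lower semi-continuous and $I$ is separable with property (UBS)), one obtains an $\mathfrak{X}$-equivariant, completely positive $\dot{\phi}\colon A\to\mathcal{M}(I)$ with $q_I\circ\dot{\phi}=\dot{\psi}$. Then
\[
\phi(a):=\Theta^{-1}\bigl(\dot{\phi}(a),\psi(a)\bigr)
\]
is a well-defined completely positive map $A\to B$ satisfying $p\circ\phi=\psi$; its $\mathfrak{X}$-equivariance follows from those of $\dot{\phi}$, $\psi$, and $\Theta$ upon noting that $(\dot{\phi}(A(U)),\psi(A(U)))\subseteq E(U)$.

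The main obstacle I anticipate is the Calkin-algebra lifting step itself, i.e., the proof that $\dot{\psi}$ admits an $\mathfrak{X}$-equivariant lift to $\mathcal{M}(I)$. Without the $C_0(X)$-fiber structure at one's disposal, one cannot argue fiberwise, and the property (UBS) must be invoked precisely to control how the $\mathfrak{X}$-ideals of $\mathcal{M}(I)$ interact with separable subalgebras and quasi-central approximate units, replacing the role played in the $C_0(X)$-case by the $*$-homomorphism $\xi_x\colon\mathcal{M}(I)\to\mathcal{M}(I(x))$. A second, more bookkeeping-level obstacle is to verify that the pullback $E$ genuinely inherits the $\mathfrak{X}$-$C^*$-structure in a lower semi-continuous way compatible with the extension, which again is exactly where (UBS) enters. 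Once these two technical points are cleared, the remainder of the argument is a routine adaptation of the Busby-reduction already carried out in the proof of Theorem \ref{theo:lift} above.
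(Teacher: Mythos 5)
Your proposal is correct and takes essentially the same route as the source: this paper does not actually prove the statement (it is quoted from \cite{gabe:lifting}), but Gabe's own proof of Theorem 5.6 is precisely your Busby reduction --- construct the $\mathfrak{X}$-equivariant Busby invariant $\mu$, lift $\mu\circ\psi$ through $q_I$ using Proposition 5.4, and reassemble via the pullback $E$ --- which is also the scheme this paper follows for its own Theorem \ref{theo:lift}. The only caveat is a misplaced worry in your closing paragraph: property (UBS) is consumed entirely by the corona-lifting step (Proposition 5.4), while the identification $\Theta(B(U))=E(U)$ needs only the level-wise exactness $0\to I(U)\to B(U)\to D(U)\to 0$ built into the definition of an $\mathfrak{X}$-$C^*$-extension, together with the elementary fact $\mathcal{M}(I)(U)\cap I=I(U)$ (proved with an approximate unit of $I$), so no semicontinuity of $E$ needs to be verified for the lifting statement itself.
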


Again several remarks are in order. Firstly, let $\mathfrak{X}$ be a locally compact, Hausdorff space $X$ in the above theorem. Then $A$ is a separable, nuclear, continuous field of $C^*$-algebras, i.e., $A$ is $C_0(X)$-nuclear. Secondly, 
\[
0 \rightarrow I \rightarrow B \rightarrow D \rightarrow 0  
\]is a $C_0(X)$-extension with $I$ separable. Finally, by the remarks just before Lemma \ref{lem:x-multiplier}, $\psi$ is a $C_0(X)$-linear, completely positive map. Thus Theorem \ref{theo:x-lifting}, in this very special case, is a slightly weaker version of Proposition \ref{prop:c0x-choi-effros}, in that both $\phi$ and $\psi$ being contractions is omitted, which is, however, easy to deduce from the proof.

To conclude, let us note that \cite{gabe:lifting}*{Proposition 5.5} is similarly related to our Proposition \ref{prop:c0x-busby}; to show equivalence of the latter with the former, one needs to show that the pullback in the former proposition is finitely lower semi-continuous and upper semi-continuous. In this regard, our Proposition \ref{prop:c0x-busby} seems to be slightly more efficient.

\begin{bibdiv}
\begin{biblist}

\bib{akemann-pedersen-tomiyama:multipliers}{article}{
      author={Akemann, C.A.},
      author={Pedersen, G.K.},
      author={Tomiyama, J.},
       title={Multipliers of {$C^*$}-algebras},
        date={1973},
        ISSN={0022-1236},
     journal={J. Functional Analysis},
      volume={13},
       pages={277\ndash 301},
         url={https://doi.org/10.1016/0022-1236(73)90036-0},
      review={\MR{470685}},
}

\bib{arveson:essentially-normal}{article}{
      author={Arveson, W.},
       title={A note on essentially normal operators},
        date={1974},
        ISSN={0035-8975},
     journal={Proc. Roy. Irish Acad. Sect. A},
      volume={74},
       pages={143\ndash 146},
        note={Spectral Theory Symposium (Trinity College, Dublin, 1974)},
      review={\MR{365217}},
}

\bib{arveson:extensions}{article}{
      author={Arveson, W.},
       title={Notes on extensions of {$C^*$}-algebras},
        date={1977},
        ISSN={0012-7094,1547-7398},
     journal={Duke Math. J.},
      volume={44},
      number={2},
       pages={329\ndash 355},
         url={http://projecteuclid.org/euclid.dmj/1077312235},
      review={\MR{438137}},
}

\bib{bauval:nuclear-rkk}{article}{
      author={Bauval, A.},
       title={{$RKK(X)$}-nucl\'{e}arit\'{e} (d'apr\`es {G}. {S}kandalis)},
        date={1998},
        ISSN={0920-3036},
     journal={$K$-Theory},
      volume={13},
      number={1},
       pages={23\ndash 40},
         url={https://doi.org/10.1023/A:1007727426701},
      review={\MR{1610242}},
}

\bib{brown-douglas-fillmore:k-homology}{article}{
      author={Brown, L.G.},
      author={Douglas, R.G.},
      author={Fillmore, P.A.},
       title={Extensions of {$C\sp*$}-algebras and {$K$}-homology},
        date={1977},
        ISSN={0003-486X},
     journal={Ann. of Math. (2)},
      volume={105},
      number={2},
       pages={265\ndash 324},
         url={https://doi.org/10.2307/1970999},
      review={\MR{458196}},
}

\bib{blanchard:deformations}{article}{
      author={Blanchard, E.},
       title={D\'{e}formations de {$C^*$}-alg\`ebres de {H}opf},
        date={1996},
        ISSN={0037-9484},
     journal={Bull. Soc. Math. France},
      volume={124},
      number={1},
       pages={141\ndash 215},
         url={http://www.numdam.org/item?id=BSMF_1996__124_1_141_0},
      review={\MR{1395009}},
}

\bib{blanchard:subtriviality}{article}{
      author={Blanchard, E.},
       title={Subtriviality of continuous fields of nuclear {$C^*$}-algebras},
        date={1997},
        ISSN={0075-4102},
     journal={J. Reine Angew. Math.},
      volume={489},
       pages={133\ndash 149},
         url={https://doi.org/10.1515/crll.1997.489.133},
      review={\MR{1461207}},
}

\bib{borys:furstenberg}{misc}{
      author={Borys, C.},
       title={The {F}urstenberg boundary of a groupoid},
        date={2020},
}

\bib{brown:continuity}{article}{
      author={Brown, L.G.},
       title={Continuity of actions of groups and semigroups on {B}anach
  spaces},
        date={2000},
        ISSN={0024-6107,1469-7750},
     journal={J. London Math. Soc. (2)},
      volume={62},
      number={1},
       pages={107\ndash 116},
         url={https://doi.org/10.1112/S0024610700001058},
      review={\MR{1771854}},
}

\bib{choi-effros:lifting}{article}{
      author={Choi, M.D.},
      author={Effros, E.G.},
       title={The completely positive lifting problem for {$C\sp*$}-algebras},
        date={1976},
        ISSN={0003-486X},
     journal={Ann. of Math. (2)},
      volume={104},
      number={3},
       pages={585\ndash 609},
         url={https://doi.org/10.2307/1970968},
      review={\MR{417795}},
}

\bib{echterhoff-williams:c_0-actions}{article}{
      author={Echterhoff, S.},
      author={Williams, D.P.},
       title={Crossed products by {$C_0(X)$}-actions},
        date={1998},
        ISSN={0022-1236,1096-0783},
     journal={J. Funct. Anal.},
      volume={158},
      number={1},
       pages={113\ndash 151},
         url={https://doi.org/10.1006/jfan.1998.3295},
      review={\MR{1641562}},
}

\bib{forough-gardella:absorption}{misc}{
      author={Forough, M.},
      author={Gardella, E.},
       title={Equivariant bundles and absorption},
        date={2021},
}

\bib{forough-gardella-thomsen:lifting}{misc}{
      author={Forough, M.},
      author={Gardella, E.},
      author={Thomsen, K.},
       title={Asymptotic lifting for completely positive maps},
        date={2021},
}

\bib{fieux:pi-algebra}{article}{
      author={Fieux, E.},
       title={Classes caract\'{e}ristiques d'une {$\Pi$}-alg\`ebre et suite
  spectrale en {$K$}-th\'{e}orie bivariante},
        date={1991},
        ISSN={0920-3036,1573-0514},
     journal={$K$-Theory},
      volume={5},
      number={1},
       pages={71\ndash 96},
         url={https://doi.org/10.1007/BF00538880},
      review={\MR{1141336}},
}

\bib{gabe:lifting}{article}{
      author={Gabe, J.},
       title={Lifting theorems for completely positive maps},
        date={2022},
        ISSN={1661-6952,1661-6960},
     journal={J. Noncommut. Geom.},
      volume={16},
      number={2},
       pages={391\ndash 421},
         url={https://doi.org/10.4171/jncg/479},
      review={\MR{4478259}},
}

\bib{gabe-szabo:stable-uniqueness}{misc}{
      author={Gabe, J.},
      author={Szabó, G.},
       title={The stable uniqueness theorem for equivariant {K}asparov theory},
        date={2022},
}

\bib{gabe-szabo:dynamical-kirchberg-phillips}{article}{
      author={Gabe, J.},
      author={Szabó, G.},
       title={The dynamical {K}irchberg-{P}hillips theorem},
        date={2024},
     journal={Acta Math.},
      volume={232},
      number={1},
       pages={1\ndash 77},
         url={https://dx.doi.org/10.4310/ACTA.2024.v232.n1.a1},
}

\bib{higson-kasparov:e-theory}{article}{
      author={Higson, N.},
      author={Kasparov, G.},
       title={{$E$}-theory and {$KK$}-theory for groups which act properly and
  isometrically on {H}ilbert space},
        date={2001},
        ISSN={0020-9910,1432-1297},
     journal={Invent. Math.},
      volume={144},
      number={1},
       pages={23\ndash 74},
         url={https://doi.org/10.1007/s002220000118},
      review={\MR{1821144}},
}

\bib{hirshberg-rordam-winter:self-absorption}{article}{
      author={Hirshberg, I.},
      author={R{\o}rdam, M.},
      author={Winter, W.},
       title={{$\scr C_0(X)$}-algebras, stability and strongly self-absorbing
  {$C^*$}-algebras},
        date={2007},
        ISSN={0025-5831,1432-1807},
     journal={Math. Ann.},
      volume={339},
      number={3},
       pages={695\ndash 732},
         url={https://doi.org/10.1007/s00208-007-0129-8},
      review={\MR{2336064}},
}

\bib{kasparov:operator-k}{article}{
      author={Kasparov, G.G.},
       title={The operator {$K$}-functor and extensions of {$C\sp{\ast}
  $}-algebras},
        date={1980},
        ISSN={0373-2436},
     journal={Izv. Akad. Nauk SSSR Ser. Mat.},
      volume={44},
      number={3},
       pages={571\ndash 636, 719},
      review={\MR{582160}},
}

\bib{kasparov:equivariant-kk}{article}{
      author={Kasparov, G.G.},
       title={Equivariant {$KK$}-theory and the {N}ovikov conjecture},
        date={1988},
        ISSN={0020-9910},
     journal={Invent. Math.},
      volume={91},
      number={1},
       pages={147\ndash 201},
         url={https://doi.org/10.1007/BF01404917},
      review={\MR{918241}},
}

\bib{kirchberg:classification}{incollection}{
      author={Kirchberg, E.},
       title={Das nicht-kommutative {M}ichael-{A}uswahlprinzip und die
  {K}lassifikation nicht-einfacher {A}lgebren},
        date={2000},
   booktitle={{$C^*$}-algebras ({M}\"{u}nster, 1999)},
   publisher={Springer, Berlin},
       pages={92\ndash 141},
      review={\MR{1796912}},
}

\bib{kasparov-skandalis:group-actions-buildings}{article}{
      author={Kasparov, G.G.},
      author={Skandalis, G.},
       title={Groups acting on buildings, operator {$K$}-theory, and
  {N}ovikov's conjecture},
        date={1991},
        ISSN={0920-3036},
     journal={$K$-Theory},
      volume={4},
      number={4},
       pages={303\ndash 337},
         url={https://doi.org/10.1007/BF00533989},
      review={\MR{1115824}},
}

\bib{kirchberg-wassermann:continuous-bundles}{article}{
      author={Kirchberg, E.},
      author={Wassermann, S.},
       title={Operations on continuous bundles of {$C^*$}-algebras},
        date={1995},
        ISSN={0025-5831,1432-1807},
     journal={Math. Ann.},
      volume={303},
      number={4},
       pages={677\ndash 697},
         url={https://doi.org/10.1007/BF01461011},
      review={\MR{1359955}},
}

\bib{le_gall:equivariant-kk}{article}{
      author={Le~Gall, P-Y.},
       title={Th\'{e}orie de {K}asparov \'{e}quivariante et groupo\"{\i}des.
  {I}},
        date={1999},
        ISSN={0920-3036},
     journal={$K$-Theory},
      volume={16},
      number={4},
       pages={361\ndash 390},
         url={https://doi.org/10.1023/A:1007707525423},
      review={\MR{1686846}},
}

\bib{michael:selection}{article}{
      author={Michael, E.},
       title={Continuous selections. {I}},
        date={1956},
        ISSN={0003-486X},
     journal={Ann. of Math. (2)},
      volume={63},
       pages={361\ndash 382},
         url={https://doi.org/10.2307/1969615},
      review={\MR{77107}},
}

\bib{meyer-nest:bootstrap-class}{article}{
      author={Meyer, R.},
      author={Nest, R.},
       title={{$C^*$}-algebras over topological spaces: the bootstrap class},
        date={2009},
        ISSN={1867-5778,1867-5786},
     journal={M\"{u}nster J. Math.},
      volume={2},
       pages={215\ndash 252},
      review={\MR{2545613}},
}

\bib{manuilov-thomsen:e-theory-special-case}{article}{
      author={Manuilov, V.},
      author={Thomsen, K.},
       title={{$E$}-theory is a special case of {$KK$}-theory},
        date={2004},
        ISSN={0024-6115,1460-244X},
     journal={Proc. London Math. Soc. (3)},
      volume={88},
      number={2},
       pages={455\ndash 478},
         url={https://doi.org/10.1112/S0024611503014436},
      review={\MR{2032515}},
}

\bib{muhly-williams:renault-theorem}{book}{
      author={Muhly, P.S.},
      author={Williams, D.P.},
       title={Renault's equivalence theorem for groupoid crossed products},
      series={New York Journal of Mathematics. NYJM Monographs},
   publisher={State University of New York, University at Albany, Albany, NY},
        date={2008},
      volume={3},
      review={\MR{2547343}},
}

\bib{popescu:equivariant-e-theory}{article}{
      author={Popescu, R.},
       title={Equivariant {$E$}-theory for groupoids acting on
  {$C^*$}-algebras},
        date={2004},
        ISSN={0022-1236},
     journal={J. Funct. Anal.},
      volume={209},
      number={2},
       pages={247\ndash 292},
         url={https://doi.org/10.1016/j.jfa.2003.04.001},
      review={\MR{2044224}},
}

\bib{proietti-yamashita:homology-1}{article}{
      author={Proietti, V.},
      author={Yamashita, M.},
       title={Homology and {$K$}-theory of dynamical systems {I}.
  {T}orsion-free ample groupoids},
        date={2022},
        ISSN={0143-3857,1469-4417},
     journal={Ergodic Theory Dynam. Systems},
      volume={42},
      number={8},
       pages={2630\ndash 2660},
         url={https://doi.org/10.1017/etds.2021.50},
      review={\MR{4448401}},
}

\bib{renault:book-groupoid-approach}{book}{
      author={Renault, J.},
       title={A groupoid approach to {$C^*$}-algebras},
      series={Lecture Notes in Mathematics},
   publisher={Springer, Berlin},
        date={1980},
      volume={793},
        ISBN={3-540-09977-8},
      review={\MR{584266}},
}

\bib{renault:crossed-products}{article}{
      author={Renault, J.},
       title={Repr\'{e}sentation des produits crois\'{e}s d'alg\`ebres de
  groupo\"{\i}des},
        date={1987},
        ISSN={0379-4024},
     journal={J. Operator Theory},
      volume={18},
      number={1},
       pages={67\ndash 97},
      review={\MR{912813}},
}

\bib{rieffel:continuous-fields}{article}{
      author={Rieffel, M.A.},
       title={Continuous fields of {$C^*$}-algebras coming from group cocycles
  and actions},
        date={1989},
        ISSN={0025-5831},
     journal={Math. Ann.},
      volume={283},
      number={4},
       pages={631\ndash 643},
         url={https://doi.org/10.1007/BF01442857},
      review={\MR{990592}},
}

\bib{raeburn-williams:pullbacks}{article}{
      author={Raeburn, I.},
      author={Williams, D.P.},
       title={Pull-backs of {$C^*$}-algebras and crossed products by certain
  diagonal actions},
        date={1985},
        ISSN={0002-9947,1088-6850},
     journal={Trans. Amer. Math. Soc.},
      volume={287},
      number={2},
       pages={755\ndash 777},
         url={https://doi.org/10.2307/1999675},
      review={\MR{768739}},
}

\bib{suzuki:equivariant-absorption}{article}{
      author={Suzuki, Y.},
       title={Equivariant {$\mathcal{O}_2$}-absorption theorem for exact
  groups},
        date={2021},
        ISSN={0010-437X,1570-5846},
     journal={Compos. Math.},
      volume={157},
      number={7},
       pages={1492\ndash 1506},
         url={https://doi.org/10.1112/s0010437x21007168},
      review={\MR{4275465}},
}

\bib{szabo:self-absorbing-3}{article}{
      author={Szab\'{o}, G.},
       title={Strongly self-absorbing {${\rm C}^\ast$}-dynamical systems,
  {III}},
        date={2017},
        ISSN={0001-8708,1090-2082},
     journal={Adv. Math.},
      volume={316},
       pages={356\ndash 380},
         url={https://doi.org/10.1016/j.aim.2017.06.008},
      review={\MR{3672909}},
}

\bib{szabo:equivariant-kirchberg-phillips}{article}{
      author={Szab\'{o}, G.},
       title={Equivariant {K}irchberg-{P}hillips-type absorption for amenable
  group actions},
        date={2018},
        ISSN={0010-3616,1432-0916},
     journal={Comm. Math. Phys.},
      volume={361},
      number={3},
       pages={1115\ndash 1154},
         url={https://doi.org/10.1007/s00220-018-3110-3},
      review={\MR{3830263}},
}

\bib{szabo:self-absorbing-1}{article}{
      author={Szab\'{o}, G.},
       title={Strongly self-absorbing {$\rm C^*$}-dynamical systems},
        date={2018},
        ISSN={0002-9947,1088-6850},
     journal={Trans. Amer. Math. Soc.},
      volume={370},
      number={1},
       pages={99\ndash 130},
         url={https://doi.org/10.1090/tran/6931},
      review={\MR{3717976}},
}

\bib{szabo:self-absorbing-2}{article}{
      author={Szab\'{o}, G.},
       title={Strongly self-absorbing {$\rm C^*$}-dynamical systems. {II}},
        date={2018},
        ISSN={1661-6952,1661-6960},
     journal={J. Noncommut. Geom.},
      volume={12},
      number={1},
       pages={369\ndash 406},
         url={https://doi.org/10.4171/JNCG/279},
      review={\MR{3782062}},
}

\bib{thomsen:equivariant-kk}{article}{
      author={Thomsen, K.},
       title={Equivariant {$KK$}-theory and {$C^*$}-extensions},
        date={2000},
        ISSN={0920-3036,1573-0514},
     journal={$K$-Theory},
      volume={19},
      number={3},
       pages={219\ndash 249},
         url={https://doi.org/10.1023/A:1007853018475},
      review={\MR{1756259}},
}

\bib{tu:baum-connes-amenable}{article}{
      author={Tu, J.L.},
       title={La conjecture de {B}aum-{C}onnes pour les feuilletages
  moyennables},
        date={1999},
        ISSN={0920-3036,1573-0514},
     journal={$K$-Theory},
      volume={17},
      number={3},
       pages={215\ndash 264},
         url={https://doi.org/10.1023/A:1007744304422},
      review={\MR{1703305}},
}

\bib{tikuisis-white-winter:quasidiagonality}{article}{
      author={Tikuisis, A.},
      author={White, S.},
      author={Winter, W.},
       title={Quasidiagonality of nuclear {$C^\ast$}-algebras},
        date={2017},
        ISSN={0003-486X},
     journal={Ann. of Math. (2)},
      volume={185},
      number={1},
       pages={229\ndash 284},
         url={https://doi.org/10.4007/annals.2017.185.1.4},
      review={\MR{3583354}},
}

\bib{williams:book-crossed-products}{book}{
      author={Williams, D.P.},
       title={Crossed products of {$C{^\ast}$}-algebras},
      series={Mathematical Surveys and Monographs},
   publisher={American Mathematical Society, Providence, RI},
        date={2007},
      volume={134},
        ISBN={978-0-8218-4242-3; 0-8218-4242-0},
         url={https://doi.org/10.1090/surv/134},
      review={\MR{2288954}},
}

\end{biblist}
\end{bibdiv}

\end{document}